\def\Ck{C_{[k]}}
\def\Q{\mathbb{Q}}
\def\P{\mathbb{P}}
\def\C{\mathbb{C}}
\def\A{\mathbb{A}}
\def\Z{\mathbb{Z}}
\def\cMG{\mathcal{M}_G}
\def\l{\langle}
\def\r{\rangle}
\def\Cs{\mathbb{C}^\times}
\def\x{\times}
\def\ox{\otimes}
\def\a{\alpha}
\def\la{\lambda}
\def\ga{\gamma}
\def\pLG{L_{poly}G}
\def\tlt0{(\tilde LT/T)_0}
\def\cM{{\mathcal{M}}}
\def\mft{{\mathfrak{t}}}
\def\mfg{{\mathfrak{g}}}
\def\cG{\mathcal{G}}
\def\cO{\mathcal{O}}
\def\Gsd{L^{\ltimes}G}
\def\pGsd{L^{\ltimes}_{poly}G}
\DeclareMathOperator{\ec}{Spec }
\DeclareMathOperator{\oj}{Proj}
\newcommand{\mc}[1]{\mathcal{#1}}
\newcommand{\ol}[1]{\overline{#1}}
\newcommand*\jjoin{\@ifnextchar_\jj@in\jj@@n}
\newcommand*\jj@@n{\bigvee\nolimits_{T}}
\newcommand*\jj@in[2]{\mathchoice
    {\mathop{\jj@@n}_{#2\phantom{T}}}%
    {\jj@@n_{#2}}%
    {\jj@@n_{#2}}%
    {\jj@@n_{#2}}%
}
\newcommand{\aba}[2]{#1 #2 {#1}^{-1}}
\newtheorem*{rep@theorem}{\rep@title}
\newcommand{\newreptheorem}[2]{%
\newenvironment{rep#1}[1]{%
 \def\rep@title{#2 \ref{##1}}%
 \begin{rep@theorem}}%
 {\end{rep@theorem}}}
\newtheorem{thm}{Theorem}[section]
\newtheorem{prop}[thm]{Proposition}
\newtheorem{lemma}[thm]{Lemma}
\newtheorem{cor}[thm]{Corollary}
\theoremstyle{definition}
\theoremstyle{remark}
\newtheorem{rmk}{Remark}
\newtheorem{ex}{Example}
\title{A complete degeneration of the \\ Moduli of $G$-bundles on a Curve}
\author{ Pablo Solis}
\address{Department of Mathematics,
 University of California,
 Berkeley, CA}
\email{pablo@math.berkeley.edu}
\begin{document}

\begin{abstract}
For a semisimple group $G$ it is known the moduli stack of principal $G$-bundles over a fixed nodal curve is not complete.  Finding a completion requires compactifying the group $G$.  However it was shown in \cite{Solis} that this is not sufficient to complete the moduli stack over a family of curves.  In this paper I describe how to use an embedding of the loop group $LG$ to provide a completion of the stack of $G$-bundles over a one dimensional family of curves degenerating to a nodal curve.  The completion comes with a modular interpretation inspired by the work of Gieseker,  Seshadri, Kausz and  Thaddeus and Martens.
\end{abstract}

\maketitle
\setcounter{tocdepth}{1}
\tableofcontents

\section{Introduction}
This paper introduces a moduli problem $\mc{X}_G$ of $G$-bundles on twisted curves that ``compactifies'' the moduli space of principal $G$-bundles on a family of smooth curves degenerating to a nodal curve.  More precisely, we show the moduli functor $\mc{X}_G$ satisfies the valuative criterion for completeness, which is a compactness statement for non separated spaces.

To motivate this problem we give a brief history of the subject starting with geometric invariant theory.  Fix two positive integers $r,d$. One of the first moduli problems which was intensely studied using geometric invariant theory was the moduli space $M_{r,d}(C)$ of semistable rank $r$ vector bundles of degree $d$ on a smooth curve $C$ of genus $g\ge 2$.  Mumford showed the locus of stable bundles is always a smooth quasi projective variety \cite{MR0166799, MR0175899}.  Seshadri then showed in \cite{SeshadriUnitary} that including the semistable bundles always yields a normal projective variety and hence a modular compactification when there are strictly semi stable bundles (which can happen if $(r,d) >1$).

In \cite{Ramanathan}, Ramanathan extended the notion of semistability to principal $G$-bundles; there he also constructed moduli spaces for stable $G$-bundles on a curve.  When $G$ is semisimple it was shown by Balaji, Seshadri \cite{MR1958909} and Faltings in \cite{Fa3} that there is a projective coarse moduli space $M_G(C)$ of semistable $G$-bundles providing a modular compactification of the moduli space of strictly stable bundles.  

Interest increased in these moduli spaces after a 1994 result of Faltings (for $G$ semisimple) and Beauville, Lazlo (for $G = SL_n$) regarding the global sections a particular line bundle $L$ on $M_G(C)$.  The result states that $H^0(M_G(C),L)$ coincides with the vector space of conformal blocks appearing in conformal field theory.  A crucial idea in establishing this result it to work with the moduli {\it stack} $\cMG(C)$ parametrizing all $G$-bundles on $C$.  The stack $\cMG(C)$ is not proper but is complete which means it satisfies the existence (but not uniqueness) part of the valuative criterion for properness.

The connection with conformal field theory effectively computed the dimension of $H^0(M_G(C),L)$ using a result called the Verlinde formula.  The proof of the Verlinde involves degenerating $C$ to a nodal curve where computations are easier.  The work of Faltings and Beauville,Lazlo suggested, at the very least, of considering degenerations of both $M_G(C)$ and $\cMG(C)$.

In fact the idea of degeneration had already proven useful a decade before in 1984, when Gieseker had used degeneration techniques on $M_{2,2n+1}(C)$ to prove a conjecture of Newstead and Ramanan \cite{Gieseker}.  In 1993 Caporaso used Giesker's approach to give a compactification of the moduli space of $\Cs$-bundles over the moduli space of stable curves $\ol{M}_g$.  Just a year later, Pandharipande \cite{MR1308406} gave a compactification over $\ol{M}_g$ of $M_{r,d}$ using torsion free sheaves.  In 1996, Faltings \cite{Fa1}, used torsion free sheaves to give degenerations of $M_{r,d}(C)$ and $M_G(C)$ for $G = SP_r,O_r$.  Then in the 1999 paper \cite{MR1687729}, Nagaraj and Seshadri extended Gieseker's approach to give a different degeneration for $M_{r,d}(C)$. 

One advantage of the Gieseker approach is that the resulting singularities are milder; indeed the boundary of the degeneration (the locus not parameterizing $GL_r$-bundles on the original nodal curve) is a divisor with simple normal crossings \cite[\textsection 5]{Seshadri}; in contrast the singularities for the torsion free sheaf approach are worse \cite[sect. 3]{Fa1} (they are formally smooth to the singularity at the zero matrices in the variety $\{X Y = Y X = 0\}$ with $X,Y$ square matrices).  Nagaraj and Seshadri's work seemed to solidify the Gieseker approach as a standard alternative to using torsion free sheaves.   

The remaining developments in this summary include mostly results using the Gieseker approach.  Let $\mc{M}_{r,d}$ be the moduli stack of rank $r$ vector bundles of degree $d$ and set $\mc{M}_{GL_r} = \sqcup_{d\in \Z} \mc{M}_{r,d}$. In 2005, Kausz \cite{K2} provided a degeneration of $\mc{M}_{GL_r}(C)$ using a compactification $KGL_r$ of $GL_r$.  In 2009, Tolland \cite{MR2713999}, gave a Gieseker comapactification for the moduli of $\Cs$ bundles over $\ol{M}_{g,n}$.  Recently, Martens and Thaddeus \cite{Martens} gave compactifications of arbitrary reductive groups using a Gieseker like approach to studying degenerations of $G$ bundles on genus $0$ curves.  On the other hand, Schmitt \cite{MR2127994} has provided a torsion free sheaf approach for an arbitrary semisimple group $G$ although it should be noted that the approach depends on a non canonical embedding $G \to SL(V)$.

The contribution we make here is to offer a Gieseker-like degeneration for $\mc{M}_G$ with $G$ a simple group. Let us now state the main theorem of this paper more precisely.  Let $S = \ec \C[[s]]$ and let $C_S$ be a projective curve over $S$ such that the generic fiber $C_{\C((s))}$ is smooth and the special fiber $C_0$ is a nodal curve with a single node.  Let $G$ be a connected, simple and simply connected algebraic group. We define a moduli stack $\mc{X}_G(C_S)$ parametrizing $G$-bundles on what we call twisted modifications of $C_S$.  Then $\mc{X}_G(C_S)$ contains $\cM_G(C_{\C((s))})$ as an open substack and
\begin{reptheorem}{Main Theorem}
The stack $\mc{X}_G(C_S)$ satisfied the valuative criterion for completeness: let $R = \C[[s]]$ and $K = \C((s))$; for a finite extension $K \to K'$ let $R'$ denote the integral closure of $R$ in $K'$.  Given the right commutative square below, there is finite extension $K\to K'$ and a dotted arrow making the entire diagram commute:
\[
\xymatrix{
\ec K' \ar[r]\ar[d] & \ec K\ar[d]\ar[r]^{h^*} & \mc{X}_G(C_S)\ar[d]\\
\ec R' \ar@{-->}[urr]^{ \ \ \ \ \ \ \ \ \ \ \  \ \ \ \ \ \ \ \ \ \ \ \ \ h}\ar[r] & \ec R\ar[r]^f & S
}
\]
\end{reptheorem}

The approach of this paper is to use the connection between loop groups and the moduli of principal bundles on curves as well as a recently defined embedding of the loop group \cite{Solis}. Further, because we work with stacks, this approach works in all genus and works for both reducible and irreducible nodal curves.

We now elaborate on the notion of a twisted modification. Specifically a twisted modification $\mc{C}'_S$ of $C_S$ is a curve over $S$ with a map $\mc{C}' \xrightarrow{f} C$ such that if $C^*_S = C_S \backslash \{p\}$ with $p$ the node, then $f^{-1}(C^*_S) \to C^*_S$ is an isomorphism and $f^{-1}(p)$ is $[R_n/\mu_k]$ where $R_n$ is a connected chain of $\P^1$s (see figure \ref{chain3}), $\mu_k$ is the group of $k$th roots of unity and the value of $k$ is determined by $G$.
\begin{figure}[htm]
\centering
\begin{picture}(90,20)
\thicklines
\line(3,1){30}
\put(-15,10){\line(3,-1){40}}
\put(10,-5){\line(3,1){35}}	
\end{picture}
\caption{A chain of $\P^1$s of length $3$.}
\label{chain3}
\end{figure}
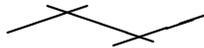  
The stack $\mc{X}_G(C_S)$ parametrizes $G$-bundles on twisted modifications $\mc{C}'$ of $C_S$ where the $G$-bundle has prescribed equivariant structure on the fixed points of the $\mu_k$ action on $f^{-1}(p)$.  We call such an object a twisted Gieseker bundle on $C_S$.

The restriction of the $G$ bundle to the chain $[R_n/\mu_k]$ is a $\mu_k$ equivariant $G$-bundle.  The use of equivariant $G$-bundles on chains is an idea introduced by Martens and Thaddeus in \cite{Martens}.  In fact they worked with $\Cs$-equivariant bundles but both Martens and Thaddeus had mentioned to me that they considered working with $\mu_k$-equivariants and that it could be a viable alternative.  

At the same time I was lead to consider $\mu_k$ equivariant $G$-bundles for an entirely different reason.  Namely, under the base change $S \xrightarrow{s \mapsto s^k} S$, the standard genus $0$ degeneration to a node $\C[x,y,s]/(xy - s)$ becomes $\C[x,y,s]/(xy - s^k)$ which can be identified with $\mu_k$ invariants in $\C[u,v]$ where $\zeta \in \mu_k$ acts by $\zeta(u,v) = (\zeta u, \zeta^{-1} v)$.  This observation had been made and used by both Faltings and Seshadri.  The step taken here was to combine this observation with equivariant bundles on chains to arrive at the definition of $\mc{X}_G(C_S)$. Finally, I relate the geometry of $\mc{X}_G(C_S)$ to the geometry of the loop group embedding constructed in \cite{Solis} to show the valuative criterion of completeness for $\mc{X}_G(C_S)$.

The basic idea for the proof of theorem \ref{Main Theorem} is as follows.  Working in a neighborhood of the node, the moduli space of $G$-bundles on these equivariant chains is naturally isomorphic to a certain orbit in the embedding $\ol{\Cs\ltimes \pLG}$ of the polynomial loop group from \cite{Solis}.  This allows one to show that the objects in $\mc{X}_G(C_S)$ degenerate in way that corresponds to a $\pLG$-orbit stratification of $\ol{\Cs\ltimes \pLG}$ and consequently deduce the completeness statement form a corresponding completeness statement for $\ol{\Cs\ltimes \pLG}$.

The outline of the paper is as follows.  Section 3 contains a discussion of some of the subtler points about the moduli spaces $M_G$ and $\cMG$.  It also contains some standard arguments used throughout the paper.  Section 4 develops results on $G$ bundles on twisted curves.  When $C$ is a fixed smooth curve there is some overlap with \cite{Balaji}.  We then proceed to a fixed nodal curve with a single node, and then to fixed curve where the node has been replaced with a $\mu_k$ equivariant chain.  In section 5 we define precisely the moduli problem $\mc{X}_G(C_S)$ and prove the main theorem.
\bigskip

\noindent {\bf Acknowledgements}
I would like to thank my advisor Constantin Teleman for numerous helpful suggestions and constant encouragement throughout this project.  I thank Michael Thaddeus for explaining his ideas about equivariant bundles and Johan Martens for many helpful discussions.  I would like to thank V Balaji and C.S. Seshadri for explaining their work parahoric torsors and pointing me to the previous work \cite{MR1687729,Seshadri} of Nagaraj and Seshadri.

\section{Basic constructions, conventions and notation}
Here we pin down conventions for various tools, construction and other notation used throughout the paper.  This is an attempt to delegate notation building here and have the other sections focused on proving the main theorem.

\subsection{Groups and Lie algebras}\label{s:notation.Lie} We use $G$ to denote a simple, connected and simply connected algebraic group $G$ over $\C$ and $T \subset G$ a maximal torus. Let $\mfg = Lie(G)$, $\mft = Lie(T)$ and let $\Delta \subset \mft^*$ be the roots so that $\mfg = \mft \oplus_{\a \in \Delta} \mfg_\a$. Let $\Delta^+$ be a choice of positive roots so that $\Delta = \Delta^+ \cup - \Delta^+$. Let $r = \dim T$ and $\a_1, \dotsc, \a_r$ denote an ordered choice of simple roots.

We have a parallel set of conventions for the loop group $LG$.  As a functor, the loop groups is defined on $\C$-algebras via $LG(R):= G(R((z)))$. Similarly, the polynomial loop group is $\pLG(R):= G(R[z^\pm])$.

There is a strong parallel between $LG$ and $G$ which is best seen by introducing $\Gsd := \Cs \ltimes LG$ or $\pGsd := \Cs \ltimes \pLG$. The group structure is given by 
\begin{align*}
(u_1,\ga_1(z))\cdot (u_2, \ga_2(z)) &= (u_1u_2, u_2^{-1}\ga_1(z)u_2 \ga_2(z))\\
u_2^{-1}\ga_1(z)u_2 &= \ga_1(u_2^{-1} z)
\end{align*}

A maximal torus for $\Gsd$ is $\Cs \x T$ for any maximal torus $T\subset G$. In sections 4,5 we work with $\pLG$ and it's Lie algebra $Lie(\pLG) = \mfg\ox \C[z^\pm] =: \mfg[z^\pm]$. Define $d$ by $ Lie(\Cs \x T) = \C d\oplus \mft$. We are now in a position to set up analogous root notation for $\mfg[z^\pm]$ and it is conventional to use the term affine to differentiate it from the notation for $\mfg$. The root spaces for $\mfg[z^\pm]$ are of the form $z^i \mfg_\a$ and $z^j \mft$. Let $\Delta^{aff} \subset (\C d \oplus \mft)^*$ be the subset so that 
\[
\C d \oplus \mfg[z^\pm] = \C d \oplus \mft \bigoplus_{(n,\a) \in \Delta^{aff}} z^n\mfg_\a. 
\]
Then the elements of $\Delta^{aff}$ are called the {\it affine roots}. Let $z^i \Delta$ stand for the roots of the form $(i, \a)$ for $\a \in \Delta$. A choice of positive roots is $\Delta^{aff,+} = \Delta^+ \bigcup_{i \ge 1} z^i \Delta \cup \{(i, 0)\}$.

Let $\theta$ denote the longest root in $\mfg$.  The simple roots for $\Cs \ltimes \pLG$ are $(0,\a_1), \dotsc, (0, \a_r), (1, - \theta)$. All of this notation also applies to $\Gsd = \Cs \ltimes LG$.  By abuse of notation we denote $(0,\a_i)$ with $\a_i$ and set $\a_0 = (1,-\theta)$.

\subsubsection{(Co-)Characters, Parabolic  and Parahoric Subgroups}\label{s:notation.para}
For any torus $T$ we have the lattice of characters $\hom(T,\Cs)$ and co-characters $\hom(\Cs,T)$. Further, for $(\eta, \chi) \in \hom(\Cs,T) \x \hom(T,\Cs)$ we set $\l \eta, \chi\r := \chi\circ \eta \in \Z$.  

For $T \subset G$ a maximal torus and for $\eta \in \hom(\Cs,T)$ the set $P(\eta):= \{g \in G| \lim_{t\to 0} \aba{\eta(t)}{g} \ \mbox{exists}\}$ is a subgroup.  A {\it parabolic} subgroup is any subgroup $P \subset G$ conjugate to some $P(\eta)$.  

We can apply the same construction for $\eta \in \hom(\Cs, \Cs \x T)$ to get a subgroup $P(\eta) \subset \Gsd$.  A {\it parahoric} subgroup is any group conjugate to one of the $P(\eta)$.  By abuse of notation, we use $P(\eta)$ to denote its image under the projection $\Gsd \to LG$.  Parahoric subgroups of $LG$ are any subgroups conjugate to one of the $P(\eta)$. 

Parabolic and parahoric subgroups come with natural factorizations $P(\eta) = L(\eta) U(\eta)$ known as a Levi decomposition: $L(\eta) = \{g \in G| \lim_{t\to 0} \aba{\eta(t)}{g}  = g\} $ and $U(\eta) = \{g \in G| \lim_{t\to 0} \aba{\eta(t)}{g} = 1\}$.  A simple is example comes from $\eta_0 \colon \Cs \to \Cs \x T$ defined by $\eta_0(t) = (t, 1)$.  Then $\aba{\eta_0(t)}{g(z)} = g(t z)$ and $P(\eta_0) = G[[z]] = G(\C[[z]]) =: L^+G$.  The Levi factorization is $G \cdot N$ where $N$ is the kernel of the map $G[[z]] \xrightarrow{z \mapsto 0} G$.

By $\mft_\Q$ we denote $\hom(\Cs,T)\ox_\Z \Q$.  The Weyl chamber is defined as $Ch:= \{\eta \in \mft_\Q| \l \a_i, \eta \r \ge 0\}$. It is a simplicial cone whose faces are given by $\{ \l \a, \eta \r = 0 | \a \in I\}$ for subsets of $I \subset \{\a_1, \dotsc, \a_r \}$.   

Similarly, we have the affine Weyl chamber $Ch^{aff} = \{\eta \in \Q\oplus\mft_\Q | \l \a_i, \eta \r > 0\}$; now the faces are in bijection with subsets $\{\a_0, \dotsc, \a_r\}$. It is convention to instead work with the affine Weyl alcove $Al:= Ch^{aff} \cap 1 \oplus \mft_\Q =  \{\eta \in \mft_\Q| 0\le \l \a_i, \eta \r, \l \theta, \eta \r \le 1 \}$.  A {\it face} $F$ of $Al$ is $F' \cap 1 \oplus \mft_\Q$ where $F'$ is a face of $Ch^{aff}$. 

Any $\eta \in Ch$ determines a fractional co-character $\Cs \to T$ but nevertheless a well defined parabolic $P(\eta)$.  Any parabolic is conjugate to some $P(\eta)$ and if $\eta,\eta'$ are in the interior of the same face then $P(\eta) = P(\eta')$.  Similarly any $\eta \in Al$ determines a parahoric $P(\eta) \subset LG$. Any parahoric is conjugate either to $P(\eta)$ or to $P(-\eta)$. Let $Al_e = \{\eta \in Al | \l \theta, \eta \r = 1\}$. If $\eta \in Al_e$ the resulting parahoric is called {\it exotic}. Alternatively,  the inclusion $\{\a_1, \dotsc, \a_r\} \subset \{\a_0, \dotsc, \a_r\}$ defines a map from faces of $Ch$ to those of $Al$.  The faces missed by $Ch$ are exactly those contained in $Al_e$.  

The exotic parahorics give rise to moduli spaces of torsors on curves which are not isomorphic with moduli spaces of $G$-bundles.  Informally then the exotic parahorics can be viewed as geometry only visible to $LG$.  Exotic parahorics are studied in depth in \cite{Balaji}; there they are called nonhyperspecial maximal parahoric subgroups.

The ordered simple roots $\{\a_0, \a_1, \dotsc, \a_r\}$ determine ordered vertices $\{\eta_0,\dotsc, \eta_r\}$ determined by the conditions $\l \eta_i, \a_j \r = 0$ for $i \ne j$ and $\l \eta_0, \a_0 \r = 1$.  If we write $\theta = \sum_{i=1}^r n_i \a_i$ and set $n_0 = 1$ then one can check these condition can be expressed as
\begin{equation}\label{para.eta}
\begin{aligned}
  & \langle \alpha_i, \eta_j \rangle = \frac{1}{n_i}\delta_{i,j} & 
\end{aligned}
\end{equation}

Now for each $I \subset \{0, \dotsc, r\}$ we define $\eta_I = \sum_{i \in I}\eta_i$.  Then $\eta_I$ lies in the face of $Al$ associated to the complement of $I$; if $I = \emptyset$ we take $\eta_I$ to be the trivial co-character. Finally, we set
\begin{equation}\label{para.etaI}
\begin{aligned}
\mc{P}_I = P(\eta_I) && \mc{P}^-_I = P(-\eta_I)\\
\mc{U}_I = U(\eta_I) && \mc{U}_I^- =  U(-\eta_I)\\
 &L_I = L(\eta_I) = L(-\eta_I)&
\end{aligned}
\end{equation}
One can check that $\mc{P}_I = \cap_{i \in I} P(\eta_i)$. It is sufficient to establish this at the level of Lie algebras because $P(\eta)$ is connected $\forall \eta$ ( the map $g \mapsto \lim_{t \to 0} \eta(t) g \eta(t)^{-1}$ defines a retraction onto the Levi factor which is connected).  Returning to Lie algebras,  we note the $\supset$ direction is routine to verify.  Going the other way we have $Lie(\mc{P}_I)$ is spanned by $\C d \oplus \mft$ and those $X_\a$ for which $\l \a, \eta_I \r \ge 0$. It is suffices to work with $\a$ negative so that $0 \ge \l \a, \eta_i \r \forall i$.   Then we have $0 \ge \sum_{i \in I} \l \a , \eta_i \r =  \l \a, \eta_I \r \ge 0$ which is only possible if each term is equal to $0$; i.e. $X_\a \in Lie(\mc{P}_i) \forall i \in I$.

\subsection{(Equivariant)-Bundles, Quotient Stacks and torsors}\label{s:notation.root}
Let $H$ be a linear algebraic group over $\C$. A principal $H$-bundle over a base scheme $B$ is scheme $P$ with a smooth map $P \to B$ such that any $p \in B$ has an fppf neighborhood $B'$ such that $P\x_B B' \cong B' \x H$. Because all of our group schemes are smooth we can equivalently require local triviality in the \'etale topology but below we generally work on curves with fppf covers coming from formal neighborhoods of points.

Given a scheme $B$ equipped with an action of an algebraic group $H$ we can form the quotient stack $[B/H]$. By definition a morphism $B' \to [B/H]$ is the data of $\empty$a principal $H$-bundle $P$ over $B'$ together with an $H$-equvariant map $P \to B$. Quotient stacks play a prominent role in our use of twisted curves defined in the next section.

Given a base $B$ with the action of a group $\Pi$ an equivariant $H$-bundle on $B$ is a bundle $P\to B$ together with an action of $\Pi$ making the following diagram commute
\begin{equation}\label{eq:equivariant.structure}
\xymatrix{
\Pi \x P \ar[d]\ar[r] & P\ar[d]\\
\Pi \x B \ar[r]&  B}
\end{equation}
Equivalently, or by definition, an equivariant $H$-bundle is a $H$-bundle on $[B/\Pi]$.  For $b \in B$ let $\Pi_b$ denote the stabilizer of $b$ in $\Pi$.  Then the above diagram produces an action of $\Pi_b$ on the fiber of $P$ over $b$.  The action is determined by a representation $\rho \colon \Pi_b \to H$.  In general we summarize this situation by saying that {\it the equivariant structure of $P$ at $b$ is given by $\rho$.}

Let $G$ be a connected, simply connected simple group over $\C$. The basic source of equivariant bundles in this paper are $G$-bundle on $[\ec \C[[z]]/\mu_k]$ where $\zeta \in \mu_k$ acts by $z \mapsto \zeta z$. Any $G$-bundle on $\ec \C[[z]]$ is trivial and so an equivariant bundle is determined by its equivariant structure $\mu_k \to G$ at the closed point of $\ec \C[[z]]$.

We also utilize torsors for a sheaf of groups $\mc{G}$. In general, given a curve $C$ and a sheaf of groups $\cG$ on $C$ we define a {\it $\cG$-torsor} to be a sheaf of sets $\mathcal{F}$ on $C$ together with a right action of $\cG$ such that (1) there is a fppf cover $\{C_i \to C\}$ such that $\mathcal{F}(C_i) \ne \emptyset$ and (2) the action map $\cG \x \mathcal{F} \to \mathcal{F} \x \mathcal{F}$ is an isomorphism.

Given $G$ as above, we can form the sheaf $U \mapsto \hom_{sch}(U,G)=: \mc{G}^{std}(U)$.  Generally our sheafs of groups agree with $\mc{G}^{std}$ on an open set $U \subset C$ but in general have more intricate behavior $C\backslash U$.  Torsors for $\mc{G}^{std}$ can be identified with $G$ bundles and so the notion is most relevant when working with a sheaf of groups $\mc{G} \ne \mc{G}^{std}$; we often write simply {\it torsor} to indicate a torsor for a sheaf of groups $\mc{G} \ne \mc{G}^{std}$ to be specified later. Examples of torsors are given in \ref{s:background.torsors}.

\subsection{Conventions on Curves}\label{s:notation.curve}
Generally we work over $\ec \C$ and a scheme will mean a scheme over $\ec \C$.  Let $S$ be a scheme. We denote a flat family of curves $C \to S$ as $C_S$. If $B$ is an $S$-scheme then $C_B:= C_S \x_S B$.  For affine schemes $\ec R \to S$ we write $C_R$ for $C_{\ec R}$. 

Generally we work with a fixed curve over $\ec \C$ or with a family of curves over $S = \ec \C[[s]]$.  Set $S^* = \ec \C((s))$ and $S_0 = \ec \C = \ec \C[[s]]/(s)$ the closed point.  Then $C_S$ always denotes a curve with generic fiber $C_{S^*}$ smooth and special fiber $C_0 := C_{S_0}$ nodal with unique node $p$.  We write $C_S - p$ for the open subscheme $C_S \backslash \{p\}$.  We also assume $C_S$ is a regular surface as scheme over $\ec \C$.

For any closed point $p$ in a scheme $Z$ we denote by $\hat \cO_{Z,p}$ the completion of $\cO_{Z,p}$ with respect to the maximal ideal.  We often use $D$ to denote a formal neighborhood of a point in a curve.  The cases that will arise are
\begin{itemize}
\item $p \in C$ a smooth curve, $\hat \cO_{C,p} \cong \C[[z]]$ and we set $D = \ec \C[[z]]$
\item $p \in C_0$ is the node, $\hat \cO_{C,p}\cong  \C[[x,y]]/xy$ and we set $D_0 = \ec \C[[x,y]]/(xy)$
\item $p \in C_S$ is the node, $ \hat \cO_{C_S,p} \cong \frac{\C[[s,x,y]]}{(xy - s)} \cong \C[[x,y]]$ and we set $D_S = \ec \C[[x,y]]$
\item for $k \ge 2$ and $k$th roots $u,v$ of $x,y$ we set $D_S^{\frac{1}{k}} = \ec \C[[u,v]]$
\end{itemize}

The last case arises as follows.  We first notice that if we base change $D_S$ under $s \mapsto s^k$ then $D_S$ becomes $\ec \C[[x,y,s]]/(xy - s^k)$.  If we let $\mu_k$ denote the $k$th roots of unity then $\ec \C[[x,y,s]]/(xy - s^k) = D_S^{\frac{1}{k}}//{\mu_k}$ where $\zeta \in \mu_k$ acts by $\zeta (u,v) = (\zeta u, \zeta^{-1} v)$Êfor $\zeta \in \mu_k$.  A basic strategy we employ is to replace the curve $\ec \C[[x,y,s]]/(xy - s^k)$ with the orbifold or twisted curve $[D_S^{\frac{1}{k}}/\mu_k]$.

In section 5 we utilize results on twisted curves from \cite{MR2786662}. We now recall the definition of a twisted curve (with no marked points) in characteristic 0. A twisted nodal curve $\mc{C} \to S$ is a proper Deligne-Mumford stack such that
\begin{itemize}
\item[(i)] The geometric fibers of $\mc{C} \to S$ are connected of dimension $1$ and such that the coarse moduli space $C$ of $\mc{C}$ is a nodal curve over $S$.
\item[(ii)] If $\mc{U} \subset \mc{C}$ denotes the complement of the singular locus of $\mc{C} \to S$ then $\mc{U} \to C$ is an open immersion.
\item[(iii)] Let $p \colon \ec k \to C$ be a geometric point mapping to a node and let $s \in S$ denote the image of $\ec k$ under $C \to S$ and let $m_{S,s}$ denote the maximal ideal of the local ring $\mc{O}_{S,s}$. Then there is an integer $k$ and an element $t \in m_{S,s}$ such that 
\[
\ec \cO_{C,p} \x_C \mc{C} \cong [D^{sh}/\mu_k]
\]
where $D^{sh}$ denotes the strict henselization of $D := \ec \cO_{S,s}[u,v]/(u v - t)$ at the point $(m_{S,s}, u, v)$ and $\zeta \in \mu_k$ acts by $\zeta (u,v) \mapsto (\zeta u, \zeta^{-1} v)$. 
\end{itemize}

We did not mention markings because largely we will not make use of them except for one exception. If $C$ is a smooth curve we can twist at a marked point $p$ as described below.  Let $p \in C$ and $D = \ec \C[[z]]$ as in the first bullet point above and fix a positive integer $k$ and a $k$th root $w$ of $z$.  We have $\ec \C((w))/\mu_k = \ec \C((z))$ so let $C_{[k]}$ denote $C - p \cup_{\ec \C((z))} [\ec \C[[w]]/\mu_k]$. It is a twisted curve whose coarse moduli space is $C$.

In a similar fashion, with $C_0, C_S$ as in the bullet points, we can construct twisted curves $C_{0,[k]}$ and $C_{S,[k]}$ with coarse moduli space $C_0, C_S$ and such the the fiber of the node is $[pt/\mu_k]$.

\section{Survey of Facts about $\mc{M}_G(C)$}
The problem of compactifying $G$-bundle on nodal curves involves some subtleties that are well known to the experts but are nevertheless worth stating explicitly. These subtleties include coarse moduli spaces vs stacks, issues on nodal curves, Gieseker bundles vs torsion free sheaves, and the connection with the loop group.

\subsection{$\cMG$, $M_G$, completeness and compactness}
Let $H$ be reductive group over $\C$. If $C$ is a smooth curve of genus $g$ over $\ec \C$ then there is a stack $\mc{M}_H(C)$ parametrizing principal $H$-bundles on $C$. It is a smooth algebraic stack of dimension $\dim H (g-1)$. Further there is a universal bundle $P^{univ} \to C \x \mc{M}_H(C)$ such that if $P\to C\x B$ is any $H$-bundle then there is a morphism $B \xrightarrow{f} \mc{M}_H(C)$ such that $P \cong (id,f)^*P^{univ}$.

Let us now specialize to groups $G$ as in \ref{s:notation.Lie}. It is known that $Pic(\mc{M}_G(C)) = \Z$ and there is a generator $L$ which is ample. Using $L$, one constructs the coarse moduli space of semistable $G$-bundles $M_G(C) = \oj \bigoplus_n \Gamma(\mc{M}_G(C),L^{\ox n})$ \cite[\textsection 8]{T2}. This is not the conventional construction but illustrates how $M_G(C)$ can be recovered from $\mc{M}_G(C)$. On the other hand, $M_G(C)$ has the advantage of being a projective variety and hence compact whereas $\mc{M}_G(C)$ is not separated and thus not compact. 

The case of $\mc{M}_{SL_2}(\P^1)$ is an instructive example. As a set, $\mc{M}_{SL_2}(\P^1) = \mathbb{N}$ where $n$ corresponds to the bundle $\mc{O}(n)\oplus \mc{O}(-n)$ where we abbreviate $\mc{O} = \mc{O}_{\P^1}$. Further, the ample generator $L \in Pic(\mc{M}_{SL_2}(\P^1))$ satisfies $H^0(\mc{M}_{SL_2}(\P^1),L^{\ox n}) = \C$ so $M_{SL_2}(\P^1) = \oj \C[t] =\ec \C$ which corresponds to $\mc{O}\oplus \mc{O}$, the unique semistable bundle.

Further there is a vector bundle $E\to \P^1 \x Ext^1(\mc{O}(1),\mc{O}(-1))$ \cite[Lemma 3.1]{NarasimhanSeshadri} such that $E|_{\P^1 \x v}$ corresponds to the extension $v \in Ext^1(\mc{O}(1),\mc{O}(-1)) = H^1(\mc{O}(-2))= \C$. For $v \ne 0$ this extension is the Euler sequence
\[
0\to \mc{O}(-1) \to \mc{O}\oplus \mc{O} \to \mc{O}(-1) \to 0
\]
Comparing with the trivial family $p_1^* (\mc{O}\oplus \mc{O})$ on  $ \P^1 \x \A^1$ we get two maps $\A^1 \xrightarrow{f_1,f_2} \mc{M}_{SL_2}(\P^1)$ that agree on $\Cs$ such that $f_1(0) = \mc{O}\oplus \mc{O}$ and $f_2(0) =  \mc{O}(1)\oplus \mc{O}(-1)$. This shows $\mc{M}_{SL_2}(\P^1)$ is not separated and further $0,1 \in \mc{M}_{SL_2}(\P^1)$ are in the same connected component; this construction generalizes to show $\mc{M}_{SL_2}(\P^1)$ is connected.  More generally, $\pi_0(\mc{M}_G(C)) = \pi_1(G)$.

Because of this behavior, we can at most ask for $\mc{M}_G(C)$ to satisfy the existence part of the valuative criterion for properness; this is called completeness. Specifically, a morphism of stacks $X \to Y$ is complete if for every complete discrete valuation ring $R$ with fraction field $K$ and every diagram with solid arrows there exists a dotted arrow making the diagram commute.
\[
\xymatrix{
\ec K' \ar[r]\ar[d] & \ec K\ar[d]\ar[r] & X\ar[d]\\
\ec R' \ar@{-->}[urr]\ar[r] & \ec R\ar[r] & Y
}
\]
where $K \to K'$ is a finite extension $R'$ is the integral closure of $R$ in $K'$.

If $C$ is a smooth curve, then $\mc{M}_G(C) \to \ec \C$ is complete. Completeness fails when $C$ is nodal as is discussed in the next section.

\subsection{Nodal Curves and the case of $GL_n$}
If $C$ is a nodal curve then $\cMG(C)$ may be complete. For the group $\Cs$ this holds on any curve of compact type.  If $C$ is a chain of $\P^1$s and $H$ is reductive then $\mc{M}_H(C)$ is discrete and naturally isomorphic to $\mc{M}_H(\tilde C)$ hence complete \cite[Variation 4]{MartensThaddeus}.  But as soon as the irreducible components of $C$ have genus $\ge 2$ then $\mc{M}_H(C)$ will not be complete. Even if the genus is $1$ we will run into trouble as the next example shows. 

Consider the curve  $C = \{y^2 - x^2(x+1) = 0\} \subset \A^2$. Consider the divisor defined on $C \x \Cs$ defined by the section $t \mapsto (t^2 + 2t , (t^2+2t)(t+1) , t)$. This defines a line bundle on $C \x \Cs$. The limit as $t\mapsto 0$ is the nodal point which doesn't define a line bundle but a rather a torsion free sheaf.  By enlarging the moduli problem to parametrize torsion free sheaves one can get a compact coarse moduli space \cite{Fa1,Seshadri}.

A key insight originally due to Gieseker \cite{Gieseker} is that torsion free sheaves can be replaced by vector bundles on modified curves. Specifically, on any nodal curve $C$ with nodes $\{p_1, \dotsc, p_m\}$, a torsion free sheaf $\mc{F}$ on $C$ can be realized as the pushforward of a vector bundle $F$ on a modification $C' \xrightarrow{\pi} C$ where $\pi^{-1}(C - \{p_1, \dotsc, p_m\} ) \to C$ is an isomorphism and $\pi^{-1}(p_i)$ is a chain of projective lines of length at most the rank of $\mc{F}$.  Further, if $\pi_*(F) = \mc{F}$ then for each $\P^1 \subset \pi^{-1}(p_i)$ it is necessary that $F|_{\P^1} = \cO(1)^{\oplus i}\oplus \cO^{rk(\mc{F}) - i}$ with $i > 1$ and that $H^0(\pi^{-1}(p_i), F|_{\pi^{-1}(p_i)} \ox \cO(-p_i'-p_i'')) = 0$ where $p_i',p_i''$ denote the extreme points on the chain $\pi^{-1}(p_i)$.

\subsection{Torsors versus $G$ bundles}\label{s:background.torsors}
See \ref{s:notation.root} for the definition of a torsor for a sheaf of groups $\mc{G}$.  The point of discussing $\cG$-torors is that a family of $G$ bundles over a nodal curve can limit to a $\cG$ torsor which cannot be identified with a $G$-bundle.

Starting with $G$ we can form the sheaf of groups $\cG^{std}(U):= \hom_{Sch}(U,G)$. Any principal bundle $F$ on $C$ defines a torsor $\mc{F}$ for $\cG^{std}$ by $\mc{F}(U) \mapsto Sect(U,F|_U)$.  In fact in much the same way vector bundles can be identified with locally free sheaves, $G$ bundles can be identified with $\cG^{std}$-torsors.

\nomenclature{$\cG^{std}$}{Sheaf of groups associated to the constant group scheme.}

More generally let $P\subset G$ be a parabolic subgroup. Let $L^+_{P}G = \{\ga \in G[[z]] \ | \ga(0) \in P\}$. Construct a sheaf of groups $\mc{G}^P$ on $\ec \C[[z]] = \{(z), (0)\}$ by $\mc{G}^P(\{(z), (0)\}) = L^+_{P}G$ and $\mc{G}^P(\{0\}) = G((z))$. Given a smooth curve $C$ and a point $p$ we notice that $\cG^{std}|_{C-p}$ and $\mc{G}^P$ agree over $\ec \C((z)) \cong C-p \x_C \ec \hat \cO_p$ and thus define a sheaf of group which we also denote $\mc{G}^P$.  Clearly we can iterate over $(x_i) = x_1,\dotsc, x_m \in C$ with parabolics $(P_i) = P_1, \dotsc, P_m$.  Call the resulting sheaf of groups $\cG^{(x_i),(P_i)}$.  Then $\cG^{(x_i),(P_i)}$-torsors are exactly quasi parabolic bundles: $G$-bundles on $C$ with reduction of structure group to $P_i$ at $x_i$.
\nomenclature{$L^+_{P}G$}{Parahoric subgroup of $LG$ associated to a parabolic of $G$}

In the examples mentioned thus far all the $\cG$-torsors can be identified with $G$-bundles potentially with additional structure; this is not always the case.  The groups $L^{+}_PG$ are parahoric subgroups and we can apply the same construction to any parahoric subgroup $\mc{P}$ (see \ref{s:notation.para} in particular for the definition of (exotic) parahorics).  Specifically, given a set $(\mc{P}_i)$ of parahoric subgroups we can analogously construct a sheaf of groups $\cG^{(x_i),(\mc{P}_i)}$. When the parahorics are exotic the resulting moduli spaces are not isomorphic to moduli spaces of $G$-bundles on $C$; see remark \ref{rmk:exotic} after corollary \ref{c:para2equiv.smooth}.
\nomenclature{$\cG(\hat \cO_x)$}{completed stalk of a sheaf of groups}

\subsection{The double coset construction}\label{s:background.DCC}
There is a close connection between the loop group $LG$ and the moduli stack $\mc{M}_G(C)$ for a smooth curve $C$.  Notice any $\ga \in G(C-p) = \hom_{Sch}(C - p, G)$ can be Laurent expanded around $p$ to produce an element in $LG$.  This realizes $G(C-p)$ as a subgroup of $LG$ which we denote $L_C G$.  Let $m_p \subset \mc{O}_{C,p}$ be the maximal ideal then choosing a basis $z \in m_p/m_p^2$ determines an isomorphism $\ec \hat{\mc{O}}_{C,p} \cong \ec \C[[z]] = D$. 

To make the connection between $LG$ and $\mc{M}_G(C)$ we introduce two functors.  Let $\C Alg$ denote the category of $\C$-algebras. Let $T' \colon \C Alg \to \mathbf{Set}$ be defined by setting $T'(R)$ to be the set of isomorphism classes of triples $(P,\tau_C,\tau_D)$ where $P$ is principal $G$-bundle on $C_R$, $\tau_D \colon G \x D_R \xrightarrow{\sim} P|_{D_R}$, $\tau_C \colon G \x (C-p)_R \xrightarrow{\sim} P|_{(C-p)_R}$ are trivializations.  Let $T$ be the functor defined by setting $T(R)$ to be isomorphism classes of pairs $(P,\tau_C)$ defined as above.  We have forgetful functors $T'\xrightarrow{f_D} T \xrightarrow{f_C} \mc{M}_G(C)$ defined by $(P, \tau_C,\tau_D) \xrightarrow{f_D} (P, \tau_C) \xrightarrow{f_C} P$.

Let $\tau_D^*$ denote the restriction of $\tau_D$ to $D^*_R = \ec R((z))$ and define $\tau_C^*$ similarly. Then we get a map
\begin{equation}\label{LGiso}
\begin{aligned}
 T' &\xrightarrow{\Theta_{C,D} } LG\\
(P,\tau_C,\tau_D) &\mapsto (\tau^*_C)^{-1} \circ \tau^*_D.
\end{aligned}
\end{equation}
Of course we also have $\Theta_{C,D}^{-1} \colon T' \to LG$ given by  $(P,\tau_C,\tau_D) \mapsto (\tau^*_D)^{-1} \circ \tau^*_C$. For definiteness we work with $\Theta_{C,D}$ but this choice is inconsequential.  

Denote by $LG/L^+G$ the sheaf associated to the pre sheaf $R \mapsto LG(R)/L^+G(R)$ in the fppf topology.  Then, for example, if $R \to R'$ is faithfully flat, $\ga \in LG(R')$, $\ga_1,\ga_2$ denote the images of $\ga$ under the two maps $R' \rightrightarrows R'\ox_R R' =: R''$ and $\ga_1 \ga_2^{-1} \in L^+G(R'')$ , then letting $\ol{\ga}$ denote the class of $\ga$  in $LG(R')/L^+G(R')$ we have $\ol{\ga_1} = \ol{\ga_2} \in LG(R'')/L^+G(R'')$.  By definition this determines a point of $(LG/L^+G)(R)$ which we denote $\ga \rightrightarrows (\ga_1, \ga_2)$.

We define a map $T \xrightarrow{\Theta_C} LG/L^+G$ as follows.  If $(P,\tau_C) \in T(R)$ then there is faithfully flat base extension $R \to R'$ such that $P|_{D_{R'}}$ admits a trivialization $\tau_D$ and hence a point of $T'(R')$. Let $\gamma(\tau_D) = \Theta_{C,D}(P,\tau_C,\tau_D) \in LG(R')$.  With $\ga_i(\tau_D)$ as above we set $\Theta_C((P,\tau_C))$ = $\ga(\tau_D) \rightrightarrows (\ga_1(\tau_D), \ga_2(\tau_D))$.  If $\tau'_D$ is another trivialization $\ga(\tau'_D),\ga_i(\tau'_D)$ differ from the unprimed version by elements in $L^+G$ hence define the same element in the quotient.

Similarly, we define a map $\mc{M}_G(C) \xrightarrow{\Theta} L_CG \backslash LG/L^+G$.  Let $P \in \mc{M}_G(C)(R)$ then by \cite{Drin}, there is a faithfully flat (in fact \'etale) base change $R \to R'$ such that $P|_{(C - p)_{R'}}$ admits a trivialization $\tau_C$ and hence a point of $T(R')$.  Let $\ga(\tau_C) = \Theta_C((P,\tau_C)) \in (LG/L^+G)(R')$ and $\ga_i(\tau_C)$ denote the two images of $\ga(\tau_C)$ in $(LG/L^+G)(R'\ox_R R')$.  Let $\ol{\ga(\tau_C)}$ denote the class of $\ga$ in $L_CG(R')\backslash (LG/L^+G)(R')$ and define $\ol{\ga_i(\tau_C)}$ similarly.  One checks $\ol{\ga_1(\tau_C)} = \ol{\ga_2(\tau_C)}$ and we set $\Theta(P) =  \ga(\tau_C) \rightrightarrows (\ga_1(\tau_C), \ga_2(\tau_C))$; as in the definition of $\Theta_C$, the map $\Theta$ is independent of the choice $\tau_C$.

Let $\pi_D \colon LG \to LG/L^+G$ and $\pi_C \colon LG/L^+G \to L_CG \backslash LG/L^+G$ be the quotient maps.  Summarizing, we have a commutative diagram
\begin{equation}\label{LGdiagram}
\xymatrix{
T'\ar[d]^{f_D}\ar[r]^{\Theta_{D,C}} & LG \ar[d]^{\pi_D}\\
T\ar[d]^{f_C}\ar[r]^{\Theta_C\ \ \ } & LG/L^+G \ar[d]^{\pi_C}\\
\mc{M}_G(C)\ar[r]^{\Theta\ \ \ \ \  \ } & L_CG \backslash LG/L^+G.
}
\end{equation}
We stress that while $\Theta_{D,C},\Theta_C$ are easy to construct, in order to construct $\Theta$ we need to use the non trivial result \cite{Drin} of Drinfeld and Simpson.  The construction of the maps $\Theta_{D,C}, \Theta_{C},\Theta$ we refer to collectively as the double coset construction (DCC).  The connection between $\mc{M}_G(C)$ and $LG$ can then be stated as

\begin{thm}\label{thm:DCC} 
All the horizontal maps in the diagram \eqref{LGdiagram} are isomorphisms.
\end{thm}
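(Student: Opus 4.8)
The plan is to prove the three horizontal arrows are isomorphisms from the top down, exploiting that each row is obtained from the one above by quotienting by a group action under which the topmost map $\Theta_{D,C}$ is equivariant. First I would show $\Theta_{D,C}\colon T' \to LG$ is an isomorphism; this is the Beauville--Laszlo gluing theorem for $G$-torsors. Given $\gamma \in LG(R) = G(R((z)))$, one glues the trivial bundle on $(C-p)_R$ to the trivial bundle $G \times D_R$ along the transition function $\gamma$ over the punctured disk $D^*_R = \ec R((z))$; Beauville--Laszlo descent guarantees this produces a genuine $G$-bundle $P$ on $C_R$ carrying tautological trivializations $\tau_C,\tau_D$, and by construction $\Theta_{D,C}(P,\tau_C,\tau_D) = \gamma$, giving surjectivity. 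Injectivity is immediate: two triples with the same $\gamma$ carry identical gluing data and are isomorphic as triples.

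Next I would record that $\Theta_{D,C}$ is biequivariant for two commuting actions: $L^+G$ acts on $T'$ by altering the disk trivialization $\tau_D \mapsto \tau_D\cdot g$ and on $LG$ by right multiplication, while $L_CG = G(C-p)$ acts on $T'$ by altering $\tau_C$ and on $LG$ by left multiplication. The forgetful map $f_D\colon T' \to T$ drops $\tau_D$. Since $P|_{D_R}$ is trivial fppf-locally on $\ec R$ (triviality of $G$-bundles on the formal disk, noted in \ref{s:notation.root}, together with its evident family version, which needs only formal smoothness of $G$ and completeness, not Drinfeld--Simpson) and any two such trivializations differ by an element of $L^+G$, the map $f_D$ exhibits $T$ as the fppf quotient of $T'$ by the right $L^+G$-action. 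Quotienting the top isomorphism then yields $\Theta_C\colon T \xrightarrow{\sim} LG/L^+G$, and the $\rightrightarrows$ descent data appearing in the construction of $\Theta_C$ is exactly the bookkeeping for the fppf-local existence of $\tau_D$.

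Finally, for $\Theta\colon \mc{M}_G(C) \to L_CG\backslash LG/L^+G$, the map $f_C\colon T \to \mc{M}_G(C)$ drops $\tau_C$, and I would exhibit $\mc{M}_G(C)$ as the stack quotient of $T$ by the left $L_CG$-action. The crucial input is that for any $P \in \mc{M}_G(C)(R)$ there is an \'etale cover $R \to R'$ trivializing $P|_{(C-p)_{R'}}$; this is precisely the theorem of Drinfeld and Simpson \cite{Drin}, applicable since $G$ is simple and simply connected. Granting it, $\tau_C$ exists \'etale-locally and changes by $L_CG$, so $f_C$ realizes $\mc{M}_G(C)$ as the quotient and the induced map $\Theta$ is an isomorphism.

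The hard part will be this last step. Unlike the upper two arrows, whose surjectivity rests only on the elementary triviality of $G$-bundles on the formal and affine local models, the surjectivity of $\Theta$ relies on the nontrivial \'etale-local triviality of $P$ on $C-p$, i.e. the full strength of Drinfeld--Simpson. Everything else is formal: verifying that the two actions commute, that $\Theta_{D,C}$ is genuinely biequivariant, and that quotienting an isomorphism of fppf sheaves (resp. stacks) by compatible group actions again yields an isomorphism.
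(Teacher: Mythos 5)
Your proposal is correct and takes essentially the same approach as the paper: both establish the top isomorphism via Beauville--Laszlo gluing of trivial bundles along the transition function $\gamma$, obtain the middle isomorphism by fppf descent after forgetting $\tau_D$ (i.e.\ quotienting by the $L^+G$-action), and reserve the Drinfeld--Simpson theorem \cite{Drin} exclusively for the bottom map, exactly as the paper emphasizes. The only difference is presentational --- the paper writes out explicit inverse maps with the $\rightrightarrows$ descent bookkeeping where you package the same verifications as ``quotienting an equivariant isomorphism'' --- so the two arguments coincide in substance.
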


\begin{proof}
See \cite[Prop.3.4]{Bea} for details. For each map one constructs a map in the other directions and checks it is the required inverse.  For the inverse to $\Theta_{C,D}$, we construct for every $\ga \in LG(R)$ a $G$-bundle on $C_R$ with trivializations on $(C-p)_R,D_R$.  If $R$ is Noetherian then $(C-p)_R \sqcup D_R$ form an fppf cover of $C_R$ and standard descent allows us to glue the trivial $G$-bundles on $(C-p)_R$ and $D_R$ over $D^*_R = R((z))$ using $\ga$ as a transition function.  In  \cite{MR1320381} Beauville and Lazlo show such gluing is possible for an arbitrary $\C$-algebra $R$.  Alternatively, for any $\C$-algebra $R$ we can write $R = \varinjlim R_i$ with $R_i$ Noetherian.  By fixing an embedding $G \subset SL_n(\C)$ we can realize any $\ga \in LG(R)$ as an $n\x n$ matrix with entries in $R$.  Each entry lies in some $R_i$ and it follows there is a single $R_i$ such that $\ga \in LG(R_i)$.  Then we can apply fppf descent to obtain a $G$-bundle with trivializations on $C_{R_i}$ and pull everything back to $C_R$.

For the inverse to $\Theta_C$, let $\ol{\ga} \in (LG/L^+G)(R)$. Then there is a faithfully flat base change $R \to R'$ such that we can present $\ol{\ga}$ as $\ga \rightrightarrows (\ga_1, \ga_2)$ with $\ga \in LG(R')$ as discussed below \eqref{LGiso}. Set $\la = \ga_1 \ga_2^{-1}$.  Let $(P',\tau_C,\tau_D) = \Theta_{C,D}^{-1}(\ga)$ and let $(P''_i,\tau_{i,C},\tau_{i,D}) = \Theta_{C,D}^{-1}(\ga_i)$ for $i = 1,2$ be the two different pull backs to $C_{R''}$ where $R'' = R'\ox_R R'$.  The group $L^+G(R'')$ acts on $(P''_i,\tau_{i,C},\tau_{i,D})$ by changing the trivialization $\tau_{i,D}$.  We have $\la := \ga^{-1}_2 \ga_1 \in L^+G(R'')$ and evidently $(P''_1,\tau_{1,C},\tau_{1,D}) = (P''_2,\tau_{2,C},\tau_{2,D}) \la $.  Applying the forgetful map $f_D$ we see $(P''_1,\tau_{1,C}) = (P''_2,\tau_{2,C})$ in $T(R'')$ and therefore this data descends to $(P,\tau_C) \in T(R)$.  The argument for $\Theta$ is similar and omitted.
\end{proof}

We now describe a few variants of the DCC.  The descent lemma \cite{MR1320381} of Beauville and Lazlo in general will not apply to these variants but we can still argue by filtering by Noetherian subrings as in the proof above.

Suppose $\mc{M}$ is a moduli space of sheaves of sets on a smooth curve $C$ with a marked point $p$ such that for all $P \in \mc{M}$ we have $P|_{C-p} \in \mc{M}_G(C-p)$.  Suppose further that all objects are isomorphic over $D$ and the set of automorphisms of $P|_D$ is a subgroup $H \subset LG$.  Let $T_\mc{M}$ denote the moduli of space of pairs $(P,\tau)$ where $\tau$ is a trivialization of $P|_{C-p}$. Then the DCC yields maps
\begin{equation}\label{name.Theta}
\begin{aligned}
 T_\mc{M} &\xrightarrow{\Theta^H_{C} } LG/H\\
\mc{M} &\xrightarrow{\Theta^H} L_{C}G \backslash LG/H.
\end{aligned}
\end{equation}
For example, we can take $\mc{M}$ be the moduli space of quasi parabolic bundles with a reduction to a parabolic $Q \subset G$ at $p \in C$.  Then $H = L_Q^+G = \{\ga \in L^+G | \ga(0) \in Q\}$.

Consider a nodal curve $C_0$ with single node $p$; we have $\ec \hat{\mc{O}}_{C_0,p} \cong \ec \C[[x,y]]/xy = D_0$ so $D_0^* = \ec \C((x)) \x \C((y))$ and $LG \x LG$ takes the roles of $LG$ and $G^\Delta = G( \C[[x,y]]/xy)$ takes the role of $L^+G$.  The DCC yields 
\begin{equation}\label{eq:nodeDCC}
\begin{aligned}
 T &\xrightarrow{\Psi_{C_0} } L_xG \x L_yG/G^{\Delta}\\
\mc{M}_G(C_0) &\xrightarrow{\Psi} L_{C_0}G \backslash L_xG \x L_yG/G^\Delta .
\end{aligned}
\end{equation}

We can generalize as before to a moduli stack $\mc{M}$ of sheaves of sets on the nodal curve $C_0$ such that for all $P \in \mc{M}$ we have $P|_{C_0 - p} \in \mc{M}_G(C_0-p)$ and all objects are isomorphic over $D_0$ and $Aut(P|_{D_0})$ is a subgroup of $L_xG \x L_y G$.  Defining $T_\mc{M}$ in an analogous manner we obtain
\begin{equation}\label{name.Psi}
\begin{aligned}
 T_\mc{M} &\xrightarrow{\Psi^H_{C_0} } L_xG \x L_yG/H\\
\mc{M}_G(C_0) &\xrightarrow{\Psi^H} L_{C_0}G \backslash L_xG \x L_yG/H.
\end{aligned}
\end{equation}

For example, we could take $\mc{M}$ to be the moduli of quasi parabolic $G$-bundles with a reduction of the structure group to a parabolic $Q \subset G$ at the node $p$.  Then $H = \{(\ga_1, \ga_2) \in L_Q^+G \x L_Q^+G | \ga_1(0) = \ga_2(0)\}$ where $L_Q^+G = \{\ga \in L^+G | \ga(0) \in Q\}$.

Another variant is to take a twisted curve $C_{[k]}$ with twisted point $p$ and a smooth coarse moduli space $C$.  We choose a $k$th root $w$ of $z$ so that $C_{[k]}\x_C D = [\ec \C[[w]]/\mu_k]$ where $\zeta \in \mu_k$ acts by $w \mapsto \zeta w$. Let $\mu_k \xrightarrow{\eta} G$ be a homomorphism; the proof of lemma \ref{l:fixk} shows we can take this to be the restriction of a co-character $\Cs \xrightarrow{\eta} G$.  Then $\zeta \in \mu_k$ acts on $L_wG = G((w))$ by $g(w) \xrightarrow{\eta} \eta(\zeta)^{-1} g(\zeta w) \eta(\zeta)$; this action is explained in the proof of proposition \ref{p:para2equiv.smooth}.  Let $(L_wG)^{\mu_k}$ denote the invariants.  Then for $g(z) \in LG = G((z))$ the assignment $g(z) \mapsto g_\eta(w) := \eta(w) g(w^k) \eta(w)^{-1}$ defines an isomorphism $LG \xrightarrow{ \eta(w) (\ \ ) \eta(w)^{-1}} (L_wG)^{\mu_k}$ and in this way allows us to consider $L_CG \subset LG$ as a subgroup of $(L_wG)^{\mu_k}$.  

Let $\mc{M}_{G,\eta}(C_{[k]})$ be the moduli stack of $G$-bundles on $C_{[k]}$ with equivariant structure at $p$ determined by $\eta$. Let $T_{G,\eta}$ be the moduli of paris $(P,\tau)$ with $P \in \mc{M}_{G,\eta}(C_{[k]})$ and $\tau$ a trivialization of $P$ over $C_{[k]} - p$. The DCC yields
\begin{equation}\label{name.Theta.k}
\begin{aligned}
 T_{G,\eta} &\xrightarrow{\Theta^\eta_{C} } (L_wG)^{\mu_k}/(L_w^+G)^{\mu_k}\\
\mc{M}_{G,\eta}(C_{[k]}) &\xrightarrow{\Theta^\eta} (L_{C}G)^{\mu_k} \backslash(L_wG)^{\mu_k}/(L_w^+G)^{\mu_k}.
\end{aligned}
\end{equation}

Finally, we can consider a fixed twisted nodal curve $C_{0,[k]}$ with twisted node $p$ and coarse moduli space $C_0$. We choose $k$th roots $u,v$ of $x,y$ so that $C_{0,[k]} \x_{C_0} D_0 = [\ec \frac{\C[[u,v]]}{uv}  / \mu_k]$ where $\zeta \in \mu_k$ acts by $(u,v) \mapsto (\zeta u, \zeta^{-1} v)$.  We define $\mc{M}_{G,\eta}(C_{0,[k]}), T_{G,\eta}$ similarly as above.  The DCC yields
\begin{equation}\label{name.Psi.k}
\begin{aligned}
 T_{G,\eta} &\xrightarrow{\Psi^\eta_{C_0} } (L_uG \x L_vG)^{\mu_k}/(G^\Delta)^{\mu_k}\\
\mc{M}_G(C_0) &\xrightarrow{\Psi^\eta} (L_{C_0}G)^{\mu_k} \backslash(L_uG \x L_vG)^{\mu_k}/(G^\Delta)^{\mu_k}.
\end{aligned}
\end{equation} 

\section{Bundles on twisted curves and twisted chains}

Here we investigate $G$-bundles on twisted nodal curves.  The motivation to consider these objects comes from the valuative criterion for completenss.  Specifically it comes from the following local calculation.

Let $C_S$ be as in \ref{s:notation.curve}  and $f \colon S \to S$ any morphism.  Let $C_{S,f}$ denote the base change and $C_{S^*,f} := C_{S,f}\x_S S^*$.  The valuative criterion requires that we provide, for any $G$-bundle $P$ on the smooth curve $C_{S^*,f}$, an object $F$ on $C_{S,f}$ such that $F|_{C_{S^*,f}}$ is $P$; we assume that $F$ is at least a sheaf of sets.

By abuse of notation let $f$ denote also the map on rings $\C[[s]] \xrightarrow{f} \C[[s]]$.  Assuming $f(s) \ne 0$ we can, after suitable change of coordinates, normalize $f$ so that $f(s) = s^k$, with $k\ne 0$. We can further restrict to $k \ge 1$, otherwise $f$ maps to the generic point and the base change is a family of smooth curves. When $k\ge 2$, let $C_{S,[k]}$ denote the twisted curve obtained from $C_S$ by removing a formal disc $D_S$ around the node and gluing in the quotient stack $[D_S^{\frac{1}{k}}/\mu_k]$; see \ref{s:notation.curve} for definitions.  Then there is a map $C_{S,[k]} \to C_{S,f}$ realizing the latter as the coarse moduli space of the former.  By abuse of notation let $p \in C_{S,[k]}$ also denote the twisted node, then $C_{S,[k]} \to C_{S,f}$ restricts to an isomorphism $C_{S,[k]} - [p] \cong C_{S,f} - p$.

\begin{prop}\label{p:first1/2}
Let $p$ be the node in $C_{S,f}$. Let $P$ be a $G$-bundle on $C_{S^*,f}$.  There is a $G$-bundle $P'$ on $C_{S,f} - p$ extending $P$. If $k = 1$ then $P'$ extends to a $G$-bundle on $C_{S,f}$. If $k>1$ then there is a $G$-bundle $P''$ on $C_{S,[k]}$ that restricts to $P'$ under the isomorphism $C_{S,[k]} - [p] \cong C_{S,f} - p$.
\end{prop}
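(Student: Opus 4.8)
For the first assertion I would work inside $C_{S,f}-p$, which is regular: away from the node $C_S-p\to S$ is smooth, so its base change under $s\mapsto s^k$ is again smooth over the regular base $S$, hence regular. Thus the task is to extend $P$, defined on the open set $C_{S^*,f}$, across the divisor $C_0-p$ (the special fibre minus the node). I would first extend over each generic point $\eta$ of $C_0-p$, whose local ring $R_\eta$ is a discrete valuation ring with uniformiser $s$ and residue field $\kappa(\eta)$ equal to the function field of a component of $C_0$; once this is done $P$ is defined on an open subset of $C_{S,f}-p$ whose complement has codimension $\ge 2$, and the codimension-two extension described below finishes the job. To extend over $R_\eta$ I would pass to the completion $\hat R_\eta\cong \kappa(\eta)[[s]]$ and extend the $G$-torsor obtained by restricting $P$, which lives over the Laurent field $\kappa(\eta)((s))$, across $\kappa(\eta)[[s]]$. \emph{This codimension-one step is the main obstacle.} Since $\kappa(\eta)$ is the function field of a curve over $\C$ it is $C_1$ by Tsen's theorem, hence of cohomological dimension $\le 1$; as $G$ is semisimple and simply connected, Bruhat--Tits theory gives $H^1(\kappa(\eta)((s)),G)=0$, so the torsor is trivial and therefore extends over $\kappa(\eta)[[s]]$. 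In the language of the double coset construction this simply says that the torsor represents an integral point of the affine Grassmannian of $G$ over $\kappa(\eta)$.

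The remaining two assertions both follow from a single codimension-two extension principle: on a regular two-dimensional base $Y$ --- a scheme or a smooth Deligne--Mumford stack --- and a closed point $y$, every $G$-bundle on $Y-y$ extends across $y$. To prove this I would fix a faithful representation $G\hookrightarrow SL(V)$ and present a $G$-bundle as a vector bundle $E$ together with a reduction of structure group, that is a section $\sigma$ of the associated bundle with fibre $SL(V)/G$. The bundle $E$ extends because its pushforward $j_*E$ is reflexive, hence locally free on the regular surface $Y$, and restricts back to $E$ across the codimension-two point $y$; the section $\sigma$ extends because $SL(V)/G$ is affine ($G$ being reductive), so $\sigma$ is a section of an affine $Y$-scheme over the complement of $y$ and extends by normality.

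Finally I apply this principle at the node. When $k=1$ one has $C_{S,f}=C_S$, which is regular, and $p$ is a closed point, so $P'$ extends directly to a $G$-bundle on $C_{S,f}$. When $k>1$ the point is that, although $C_{S,f}$ acquires an $A_{k-1}$-singularity at $p$, the twisted curve $C_{S,[k]}$ is regular there, its node carrying the smooth chart $[\ec \C[[u,v]]/\mu_k]$; this is exactly the reason for passing to the twisted curve. Transporting $P'$ through the isomorphism $C_{S,[k]}-[p]\cong C_{S,f}-p$ and applying the principle with $Y=C_{S,[k]}$ and $y=[p]$ --- equivalently, extending a $\mu_k$-equivariant bundle across the origin of $\ec \C[[u,v]]$, where the reflexive hull and the section-extension are canonical and hence automatically $\mu_k$-equivariant --- produces the desired $P''$.
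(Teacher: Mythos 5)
Your proof is correct, and it follows the same skeleton as the paper's proof (extend in codimension one across the generic points of $C_0$, then across codimension-two points of the regular locus, then across the node via the chart $\ec \C[[u,v]]$), but the inputs justifying each step are genuinely different. For the codimension-one step the paper does not localize and complete: it cites \cite{deJong} ($H^1(K,G)=1$ for $K$ the function field of the surface, i.e.\ Serre's Conjecture II in this case) to conclude that $P$ is \emph{generically} trivial, and then glues the trivial bundle over a neighborhood of the generic points of $C_0$. Your substitute --- Cohen's isomorphism $\hat R_\eta\cong\kappa(\eta)[[s]]$, Tsen's theorem, and the Bruhat--Tits vanishing $H^1(\kappa(\eta)((s)),G)=1$ --- is classical and valid, but note that it genuinely needs the Beauville--Laszlo-type gluing you allude to in order to descend the extension from the completion $\hat R_\eta$ back to a Zariski neighborhood of $\eta$; the paper's generic-triviality route avoids that descent step entirely. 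For the remaining assertions the paper simply cites the literature: Colliot-Th\'el\`ene--Sansuc (\cite{Sansuc}, Thm 6.13, purity for torsors under reductive groups on regular two-dimensional schemes) to cross codimension-two points of $C_{S,f}-p$ and of $C_{S,f}$ when $k=1$, and \cite{Brav} (Prop.\ 3.7, bundles on $D_{S,f}-p$ extend over $[\ec\C[[u,v]]/\mu_k]$) when $k\ge 2$; your reflexive-hull-plus-affineness-of-$SL(V)/G$ argument, with its equivariant upgrade at the node, is in effect a self-contained proof of both cited statements at once (indeed it is essentially how the Colliot-Th\'el\`ene--Sansuc theorem is proved). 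The trade-off is clear: the paper is shorter by deferring to references, and generic triviality is a stronger statement to have in hand; your version uses older and lighter machinery than the de Jong--He--Starr theorem, treats the scheme and stack cases uniformly, and makes visible that the only serious input is the codimension-one triviality.
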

\begin{proof}
Let $K$ be the generic point of $C_{S^*}$.  In \cite[Corollary 1.5]{deJong} it is shown that $H^1(K, G) = 1$.  Therefore we can extend $P$ over the generic point of $C_0$ by taking it to be trivial in a neighborhood of this point.  Thus we have extended $P$ on the complement of a codimension $2$ subset.  The surface $C_{S,f} - p$ is always smooth and by \cite[Thm 6.13]{Sansuc} the $G$-bundle extends to all of $C_{S,f} - p$.  When $k = 1$ the surface $C_{S,f}$ is smooth and applying again \cite{Sansuc} covers this case.  

We now assume $k\ge 2$. By the above, it suffices to study $F$ in a neighborhood of the node.  So we restrict to $D_S = \ec \C[[x,y]]$ and then $D_{S,f} = \ec \C[[x,y,s]]/(xy - s^k)$.  The basic observation is that $\C[[x,y,s]]/(xy - s^k)$ is the ring of $\mu_k$ invariants in $\C[[u,v]]$ with action given by $\zeta (u,v) = (\zeta u, \zeta^{-1} v)$ and we identify $u^k = x, v^k = y, uv = s$.   By \cite[Prop. 3.7]{Brav}, every $G$-bundle on $D_{S,f} - p = \ec \C[[x,t]][x^{-1}] \cup \ec \C[[y, t]][y^{-1}]$ is the restriction of a $G$-bundle on $[\ec \C[[u,v]]/\mu_k]$; the result is stated for $\ec \C[x,y,s]/(xy - s^k)$ but the same proof works in our case. Consequently there is a $G$-bundle on $[D^{\frac{1}{k}}_S/\mu_k]$ that extends $P$ over the node.
\end{proof} 

\subsection{Fixed curve}
We now enter into an analysis of $G$-bundles on twisted curves. Let $C_{[k]}$ denote a twisted curve with smooth coarse moduli space $C$ and a single twisted point $p$ with stabilizer group $\mu_k$.   We show $G$-bundles $P$ on $C_{[k]}$ can be identified with torsors $\mc{F}$ on $C$ and that the moduli of such $\mc{F}$ on $C$ is not isomorphic to $\mc{M}_G(C)$.  This represents an obstruction to completing $\cMG(C_S)$ by only parametrizing degenerations of $G$-bundles on $C_0$; one should include degenerations of $G$-bundles on $C_{0,[k]}$ or degenerations of torsors on $C_0$.

Let $\eta \in \hom (\Cs, T) \ox_\Z \Q$ be an exotic co-character so that the associated parahoric $\mc{P} = P(\eta)$ is exotic ( see \ref{s:notation.para}). Let $\mc{G}^{std}$ be the sheaf of groups defined by $C \supset U \mapsto \hom_{Sch}(U, G)$.  Let $\cG^\mc{P}$ be the sheaf of group constructed in \ref{s:background.torsors}; namely $\cG^\mc{P}|_{C - p} = \cG^{std}$ and $\cG^\mc{P}(\hat \cO_{C,p}) = \mc{P}$.  

Let $T_{\cG^\mc{P}}(C)$ be the moduli space of pairs $(\mc{F},\tau)$ consisting of a $\cG^\mc{P}$-torsor $\mc{F}$ on $C$ together with a trivialization $\tau$ over $C-p$.  Similarly, let $T_{G,\eta}(C_{[k]})$ be the moduli space of pairs $(P,\tau)$ consisting of a $G$ bundle $P$ on $C_{[k]}$ with equviariant structure determined by $\eta$ (see \eqref{eq:equivariant.structure} in \ref{s:notation.root} and the paragraph below it) and a trivialization $\tau$ on $C-p$.  Define $T_{\cG^\mc{P}}(D)$ and $T_{G, \eta}([D^{\frac{1}{k}}/\mu_k])$ similarly with $C-p$ replaced by $D - p$. 
\begin{prop}\label{p:para2equiv.smooth}
Suppose $k \eta \in \hom (\Cs, T)$. Let $\cG^\mc{P}$,$C$,$C_{[k]}$, $D = \ec \C[[z]]$,$[D^{\frac{1}{k}}/\mu_k]$ be as above.  Choose a $k$th root $w$ or $z$ so that $D^{\frac{1}{k}} = \ec \C[[w]]$.  Let $i_{[k]} \colon [D^{\frac{1}{k}}/\mu_k] \to C_{[k]}$ and $i \colon D \to C$ be the natural maps. Then we have isomorphisms
\[
\xymatrix{
T_{\cG^\mc{P}}(D) & \ar[l]_{i^*}T_{\cG^\mc{P}}(C)\ar[d]_{\Theta_C^\mc{P}}\ar[r]^{\Xi_C} & T_{G, \eta}(C_{[k]})\ar[d]^{\Theta_C^\eta} \ar[r]^{ i_{[k]}^*} & T_{G, \eta}([D^{\frac{1}{k}}/\mu_k])\\
& LG/\mc{P}\ar[r]^{\eta (\ \ )\eta^{-1} \ \  \ \  \ \ } & (L_wG)^{\mu_k}/(L^+_wG)^{\mu_k} & 
}
\]
where $\Xi_C$ is defined to be $(\Theta_C^\eta)^{-1} \circ \eta (\ \ )\eta^{-1} \circ \Theta_C^\mc{P}$ and $\Theta_C^\mc{P}$ is the map in \eqref{name.Theta}, $\Theta_C^\eta$ the map in \eqref{name.Theta.k}, and $\eta (\ \ )\eta^{-1}$ is $g(z)\mc{P} \mapsto \eta(w) g(w^k)\eta(w)^{-1} (L^+_wG)^{\mu_k}$.
\end{prop}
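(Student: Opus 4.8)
The plan is to collapse the whole diagram onto the double coset construction (Theorem \ref{thm:DCC}), so that the two vertical maps are isomorphisms by construction and $i^*,i^*_{[k]}$ are isomorphisms for formal reasons; the only substantive point will then be that the bottom horizontal arrow is an isomorphism of homogeneous spaces. Granting these, $\Xi_C$ is an isomorphism automatically: it is \emph{defined} as the composite $(\Theta^\eta_C)^{-1}\circ \eta(\ )\eta^{-1}\circ\Theta^\mc{P}_C$, and the central square commutes tautologically. So the real content is the comparison, in a formal neighborhood of the node, between the exotic parahoric $\mc{P}=P(\eta)$ and the $\mu_k$-invariants $(L^+_wG)^{\mu_k}$; everything else is bookkeeping layered on top of the DCC.

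First I would recognize the two vertical maps as the DCC variants \eqref{name.Theta} and \eqref{name.Theta.k}. For $\Theta^\mc{P}_C$ take $\mc{M}$ to be the stack of $\cG^\mc{P}$-torsors, with $H=\mc{P}$: since $\cG^\mc{P}|_{C-p}=\cG^{std}$ every such torsor is a $G$-bundle away from $p$, every $\cG^\mc{P}$-torsor on $D=\ec\C[[z]]$ is trivial (torsors for a parahoric group scheme over a complete local ring are trivial), and the automorphisms of the trivial torsor on $D$ are exactly $\cG^\mc{P}(D)=\mc{P}$. This is precisely the input for \eqref{name.Theta} and yields $\Theta^\mc{P}_C\colon T_{\cG^\mc{P}}(C)\xrightarrow{\sim}LG/\mc{P}$. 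Symmetrically, for $\Theta^\eta_C$ I invoke \eqref{name.Theta.k}: an $\eta$-equivariant $G$-bundle on $[D^{1/k}/\mu_k]$ is trivial as a bundle and carries a fixed equivariant structure, so all such bundles are isomorphic and the automorphism group of the standard one is $(L^+_wG)^{\mu_k}$, giving $\Theta^\eta_C\colon T_{G,\eta}(C_{[k]})\xrightarrow{\sim}(L_wG)^{\mu_k}/(L^+_wG)^{\mu_k}$.

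The heart of the argument is the bottom arrow. The assignment $g(z)\mapsto \eta(w)g(w^k)\eta(w)^{-1}$ was already shown to be an isomorphism $LG\xrightarrow{\sim}(L_wG)^{\mu_k}$ in the discussion preceding \eqref{name.Theta.k}; the hypothesis $k\eta\in\hom(\Cs,T)$ is exactly what makes $\eta$ determine the homomorphism $\mu_k\to T$ giving the equivariant structure and forces the output into integral powers of $w$. What remains is that this isomorphism carries $\mc{P}=P(\eta)$ onto $(L^+_wG)^{\mu_k}$, for then it descends to the quotients. I would check this on Lie algebras, where it becomes a grading comparison. By \ref{s:notation.para}, $Lie(\mc{P})$ is spanned by $\C d\oplus\mft$ together with the affine root spaces $z^n\mfg_\a$ satisfying $n+\langle\alpha,\eta\rangle\ge 0$, while $Lie((L^+_wG)^{\mu_k})$ consists of the $\mu_k$-invariant $w^m\mfg_\a$ with $m\ge 0$. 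Substituting $z=w^k$ and conjugating by $\eta(w)$ sends $z^n\mfg_\a$ to $w^{\,kn+\langle\alpha,k\eta\rangle}\mfg_\a$; the exponent is integral since $k\eta\in\hom(\Cs,T)$, its residue $\langle\alpha,k\eta\rangle\bmod k$ is exactly the $\mu_k$-invariance condition, and the non-negativity $m=kn+\langle\alpha,k\eta\rangle\ge 0$ is equivalent to the alcove inequality $n+\langle\alpha,\eta\rangle\ge 0$. The Cartan directions $z^n\mft$ ($n\ge 0$) go to $w^{kn}\mft$, again in the invariant non-negative part. Since $P(\eta)$ is connected, this matching of Lie algebras lifts to the groups, so the conjugation carries $\mc{P}$ isomorphically onto $(L^+_wG)^{\mu_k}$ and the bottom arrow is an isomorphism. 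This reconciliation of the fractional datum $\eta$ (labeling the exotic parahoric) with the integral datum $k\eta$ (defining the equivariant local model) is the step I expect to be the main obstacle.

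Finally I would note that $i^*$ and $i^*_{[k]}$ are isomorphisms for the same reason the vertical maps are: in the DCC the targets $LG/\mc{P}$ and $(L_wG)^{\mu_k}/(L^+_wG)^{\mu_k}$ depend only on the formal neighborhood of $p$, so restriction along $i\colon D\to C$ and $i_{[k]}\colon[D^{1/k}/\mu_k]\to C_{[k]}$ induces the identity on double cosets and is invertible. Assembling the two vertical isomorphisms, the bottom isomorphism, the two restriction isomorphisms, and the tautological commutativity of the square then exhibits $\Xi_C$ as an isomorphism, completing the proof.
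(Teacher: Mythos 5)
Your proposal follows essentially the same route as the paper's proof: descent/DCC arguments dispose of the restriction maps and the vertical maps, and the substantive step --- that conjugation by $\eta(w)$ together with the substitution $z = w^k$ carries $\mc{P} = P(\eta)$ onto $(L^+_wG)^{\mu_k}$ --- is verified by the same Lie-algebra grading comparison plus connectedness that the paper uses. One caveat: the isomorphism $LG \xrightarrow{\sim} (L_wG)^{\mu_k}$ and the identification of equivariant automorphisms with twisted-invariant loops are not available ``from the discussion preceding \eqref{name.Theta.k}'' --- the paper states them there but proves both inside this very proposition --- though these are short direct checks that your residue-mod-$k$ computation already essentially contains.
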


\begin{proof}
Using descent theory as in the proof of theorem \ref{thm:DCC} we construct inverses to $i^*,i^*_{[k]}$. Let $(P_R,\tau) \in T_{\cG^\mc{P}}(D_R)$; after a flat base change $R \to R'$, the pullback of $P_R$ become trivial and comparing with $\tau$ defines a loop $\in LG(R')$. By gluing with the trivial bundle over $C-p$, we obtain a bundle with a fixed trivialization over $C-p \x \ec R$.  Again by descent theory this is well defined and gives an inverse map $T_{\cG^\mc{P}}(D) \to T_{\cG^\mc{P}}(C)$ similarly we have an inverse map $T_{G}([D^{\frac{1}{k}}/\mu_k]) \to T_G(C_{[k]})$. 

To establish that $\Theta^\mc{P},\Theta^\eta$ are isomorphisms it suffices to show their restrictions $T_{\cG^\mc{P}}(D) \to LG/\mc{P}$ and $T_{G, \eta}([D^{\frac{1}{k}}/\mu_k]) \to (L_wG)^{\mu_k}/(L^+_wG)^{\mu_k}$ define isomorphisms.  In the first case this follows because a point $LG/\mc{P}$ defines descent data for an object in $T_{\cG^\mc{P}}(D)$.

To handle the equivariant case we need to compute the $\mu_k$ equivariant automorphisms of $\ec \C((w)) \x G$  over $\ec \C((w))$. In order for $\ga \in L_w G = G((w))$ to define an equviariant automorphism of $\ec \C((w)) \x G$ (and thus determine an element of $(L_wG)^{\mu_k}$) we need for $\zeta \in \mu_k$
\[
\xymatrix{(w,g)\ar[d]^\gamma\ar[r]^\zeta & (\zeta w, \eta(\zeta)g)\ar[d]^\gamma\\
(w,\ga(w) g) \ar[r]^{\zeta \ \ \ \ \ \ \ \ \ \  \ \ \ \ \ \ \ \ \ } & (\zeta w, \eta(\zeta)\ga(w)g) =(\zeta w, \ga(\zeta w)\eta(\zeta)g)
}
\]
Or $\ga(w) = \eta(\zeta)^{-1}\ga(\zeta w) \eta(\zeta)$.  Thus we are concerned with invariants for the action of $\mu_k$ given by $\ga(w) \xrightarrow{\zeta} \eta(\zeta)^{-1}\ga(\zeta w) \eta(\zeta)$.  

We can now argue as before to establish $T_{G,\eta}([D^{\frac{1}{k}}/\mu_k]) \to G((w))^{\mu_k}/G[[w]]^{\mu_k}$ is an isomorphism.  It remains to check  $ LG/\mc{P} \to (L_wG)^{\mu_k}/(L^+_wG)^{\mu_k}$ is an isomorphism.

Let $\ga \in G((z))$ and set $\ga_\eta(w):= \eta(w) \ga(w^k) \eta(w)^{-1}$; the following shows $\ga_\eta(w) \in (L_wG)^{\mu_k}$:
\begin{align*}
\ga_\eta(w) &\xrightarrow{\zeta} \eta(\zeta)^{-1} \eta(\zeta w) \ga(w^k) \eta(\zeta w)^{-1}  \eta(\zeta)\\
&= \ga_\eta(w)
\end{align*}
where we have used that $(\zeta w)^k = w^k$ and $\eta(\zeta w) = \eta(\zeta) \eta(w)$.  Similarly, one can check for any $g(w) \in G((w))^{\mu_k}$ that $g^\eta(w) = \eta(w) g(w) \eta(w)^{-1} \in G((z))$ by checking it is invariant under the action $g^\eta(w) \mapsto g^\eta(\zeta w)$; thus $LG \xrightarrow{\eta (\ ) \eta^{-1}} ((L_wG)^{\mu_k})$ is an isomorphism.

Now let $\ga \in P(\eta)$.  We show in this case $\ga_\eta \in G[[w]]$.  It is sufficient to do this at the level of Lie algebras again because the groups involved are connected. In particular, $Lie(P(\eta))$ has a basis consisting of elements of the form $X_\a z^i$ where $X_\a$ is the root space associated to $\a$.  We have $\eta(w) X_\a z^i \eta(w)^{-1} = X_\a w^{k \l \eta,\a \r}z^i$.  Now the value of $\l \eta, \a\r$ is a rational number between $-1$ and $1$.  We can check if this is in $\mfg[[w]]$ by checking if $k \l \eta,\a \r + ki \ge 0$.  But this is equivalent to $ \l \eta,\a \r + i \ge 0$. Finally, $X_\a z^i \in Lie(P(\eta))$ implies that $\lim_{t \to 0} t^{\l \eta, \a\r + i} X_\a z^i$ exists which guarantees that $ \l \eta,\a \r + i \ge 0$. Altogether, we see $LG \xrightarrow{\eta (\ ) \eta^{-1}} ((L_wG)^{\mu_k})$ descends to an isomorphism as in the statement of the proposition.
\end{proof}
Let $\mc{M}_{G^{\mc{P}}}(C)$ be the moduli stack of $\cG^\mc{P}$-torsors on $C$ and $\mc{M}_{G,\eta}(C_{[k]})$ be the moduli space of $G$ bundle on $C_{[k]}$ with equivariant structure determined by $\eta$.

\begin{cor}\label{c:para2equiv.smooth}
Suppose $k \eta \in \hom(\Cs, T)$. The isomorphism $\Xi_C\colon T_{\cG^\mc{P}}(C) \to T_{G, \eta}(C_{[k]})$ of proposition \ref{p:para2equiv.smooth} descends to an isomorphism $\Xi \colon \mc{M}_{G^{\mc{P}}}(C) \to \mc{M}_{G,\eta}(C_{[k]})$
\end{cor}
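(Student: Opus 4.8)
The plan is to exhibit both stacks $\mc{M}_{G^{\mc{P}}}(C)$ and $\mc{M}_{G,\eta}(C_{[k]})$ as quotients of $T_{\cG^\mc{P}}(C)$ and $T_{G,\eta}(C_{[k]})$ by the group $L_CG$ acting by change of the trivialization over $C-p$, and then to verify that $\Xi_C$ is equivariant for these two actions, so that it descends to the quotients. First I would recall, from the double coset construction of Theorem \ref{thm:DCC} and its variants \eqref{name.Theta} and \eqref{name.Theta.k}, that forgetting the trivialization corresponds under $\Theta_C^{\mc{P}}$ and $\Theta_C^\eta$ to passing from $LG/\mc{P}$ and $(L_wG)^{\mu_k}/(L_w^+G)^{\mu_k}$ to the left quotients $L_CG\backslash LG/\mc{P}$ and $(L_CG)^{\mu_k}\backslash(L_wG)^{\mu_k}/(L_w^+G)^{\mu_k}$; these are precisely the targets of the isomorphisms $\Theta^{\mc{P}}$ and $\Theta^\eta$. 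Since trivializations over $C-p$ exist \'etale locally (Drinfeld--Simpson, as used in the proof of Theorem \ref{thm:DCC}), the forgetful maps $T\to\mc{M}$ are $L_CG$-torsors, so $\mc{M}$ is the stacky quotient $[T/L_CG]$ on each side.

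Since Proposition \ref{p:para2equiv.smooth} writes $\Xi_C = (\Theta_C^\eta)^{-1}\circ \eta(w)(\ )\eta(w)^{-1}\circ \Theta_C^{\mc{P}}$, and the two outer maps already intertwine the change-of-trivialization action with left translation, the descent of $\Xi_C$ reduces to descending the middle map $\phi := \eta(w)(\ )\eta(w)^{-1}$ along the left $L_CG$-action. But $\phi$ is a group isomorphism $LG \xrightarrow{\sim} (L_wG)^{\mu_k}$, hence automatically equivariant for left multiplication; and by the very definition of the embedding $L_CG \hookrightarrow (L_wG)^{\mu_k}$ given before \eqref{name.Theta.k}, $\phi$ carries the subgroup $L_CG \subset LG$ onto the group $(L_CG)^{\mu_k}$ appearing in the left quotient. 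Combined with the fact, established in the proof of Proposition \ref{p:para2equiv.smooth}, that $\phi(\mc{P}) = (L_w^+G)^{\mu_k}$, this shows $\phi$ descends to an isomorphism of double coset spaces $L_CG\backslash LG/\mc{P} \xrightarrow{\sim} (L_CG)^{\mu_k}\backslash(L_wG)^{\mu_k}/(L_w^+G)^{\mu_k}$. Conjugating by $\Theta^{\mc{P}}$ and $\Theta^\eta$ then yields $\Xi$, and by $2$-functoriality of $[-/L_CG]$ it is an isomorphism because each of the three constituent maps is.

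The main point requiring care is the bookkeeping of the two $L_CG$-actions: I must check that ``change of trivialization over $C-p$'' on the torsor side $T_{\cG^\mc{P}}(C)$ and on the twisted-bundle side $T_{G,\eta}(C_{[k]})$ correspond under $\phi$ to the same left-multiplication action, i.e.\ that the embedding $L_CG \hookrightarrow (L_wG)^{\mu_k}$ used to form the left quotient in \eqref{name.Theta.k} really is the restriction of $\phi$. This is ultimately definitional --- it is how $(L_CG)^{\mu_k}$ was introduced --- but it is the step where the identification $C_{[k]} - [p] \cong C - p$ and the conjugation by $\eta(w)$ coming from the equivariant structure must be reconciled, and it is where an error would most easily hide.
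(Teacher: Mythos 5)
Your proposal is correct and follows essentially the same route as the paper: both exhibit the two moduli stacks as $L_CG$-quotients of the framed spaces (via Drinfeld--Simpson triviality over $C-p$, giving $\mc{M}_{G^{\mc{P}}}(C) \cong L_CG\backslash LG/\mc{P}$ and $\mc{M}_{G,\eta}(C_{[k]}) \cong L_CG\backslash (L_wG)^{\mu_k}/(L_w^+G)^{\mu_k}$), and then descend the conjugation isomorphism $\eta(\ )\eta^{-1}$ to double cosets using the fact that it carries $L_CG\subset LG$ onto the copy of $L_CG$ inside $(L_wG)^{\mu_k}$ fixed before \eqref{name.Theta.k}. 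The equivariance bookkeeping you flag as the delicate point is exactly the step the paper disposes of by that same definitional identification.
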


\begin{proof}
In light of the previous proposition, the argument is purely formal and follows as in the proof of theorem \ref{thm:DCC}; see also \cite[prop.3.4]{Bea}.  

Let $P$ be a $G$-bundle on $\Ck$. The restriction of $P$ to $C-p$ is a $G$-bundle. By \cite{Drin} it is trivial.  Consequently the forgetful map $T_{G,\eta}(C_{[k]}) \to \mc{M}_{G,\eta}(C_{[k]})$ is essentially surjective and equivariant for the action of $L_CG = G[C-p]$ which changes the trivialization.  It descends to give a map $L_C G\backslash T_{G,\eta}(C_{[k]}) \to \mc{M}_{G,\eta}(C_{[k]})$ and one can construct an inverse by associating to $P$ the set of trivializations over $C-p$. The same argument holds for a $G^{\mc{P}}$-torsor $\mc{F}$ on $C$.  We obtain isomorphisms $\mc{M}_{G^{\mc{P}}}(C) \xrightarrow{\sim} L_CG \backslash LG/\mc{P}$ and $\mc{M}_{G,\eta}(\Ck) \xrightarrow{\sim} L_CG \backslash (L_wG)^{\mu_k}/(L^+_wG)^{\mu_k}$.  Finally, the isomorphism $LG/\mc{P} \xrightarrow{\eta (\ \ ) \eta^{-1}} (L_wG)^{\mu_k}/(L^+_wG)^{\mu_k}$ gives an isomorphism $L_CG \backslash LG/\mc{P} \to L_CG \backslash (L_wG)^{\mu_k}/(L^+_wG)^{\mu_k}$ which establishes the result.
\end{proof}

\begin{rmk} 
In \cite{Balaji}, Balaji and Seshadri develop similar results in the context of Bruhat-Tits group schemes.  For corollary \ref{c:para2equiv.smooth} see specifically \cite[Thm 5.2.2]{Balaji}.
\end{rmk}

\begin{rmk}\label{rmk:exotic}
For $G = SL_n$ all the parahorics of $LG$ are conjugate by elements in $LGL_n$ to subgroups $L_Q^+G \subset L^+G$ where $Q \subset G$ is a parabolic.  Consequently the resulting moduli spaces can be identified with moduli spaces of vector bundles with in general nontrivial determinant.  However in general the parahorics will no longer be even abstractly isomorphic and thus neither will be the resulting moduli spaces.  For example, $SP_4$ has a parahoric whose Levi factor is $SL_2 \x SL_2$ which distinguishes it from the standard parahoric $SP_4[[z]]$.  
\end{rmk}

\begin{rmk}\label{rmk:k_G}
Let $\eta_i$ be the vertices of $Al$.  Define $k_i$ as the minimum integer such that $k_i \cdot \eta_i \in \hom(\Cs, T)$ and set $k_G = lcm(k_i)$.  The $\eta_i$ correspond to the maximal parahorics $\mc{P}_i$ of $LG$ and further any parahoric $\mc{P}$ is conjugate to a subgroup of some $\mc{P}_i$. It follows readily that $k = k_G$ is the minimum value of $k$ for which the statement of corollary \ref{c:para2equiv.smooth} holds for any particular parahoric $\mc{P}$.
\end{rmk}

In section \ref{s:MainThm} we will need to fix the value of $k$; this is possible by remark \ref{rmk:k_G} and lemma \ref{l:fixk}. To state it we introduce some notation.  Let $i$ be a positive integer and set $C^*_i = \ec \C[[x,y,s]]/(x y - s^i) - (0,0,0)$.  For any two positive integers $i,j$ we can obtain $C^*_{ij}$ as a base change either from $C^*_i$ or $C^*_{j}$, that is the left commutative diagram induces the right commutative diagram:

\SelectTips{eu}{12}
\[
\xymatrix{
 \C[[s]] & \ar[l]^{s^{j} \mapsfrom s}\C[[s]]  & & C^*_{ij} \ar[r]\ar[d]  & C^*_i \ar[d] \\
 \C[[s]] \ar[u]^{s^{i} \mapsfrom s} & \ar[u]^{s^{i} \mapsfrom s}\ar[l]^{s^{j} \mapsfrom s}\C[[s]]  & &  C^*_{j} \ar[r] & C^*_1
} 
\] 

\begin{lemma}\label{l:fixk}
Let $k = k_G$ be as in remark \ref{rmk:k_G} and let $l$ be any positive integer. Let $P$ be a $G$-bundle on $C^*_l$.  Then there is a $G$-bundle $P'$ on $C^*_k$ such that $P \cong P'$ on $C^*_{l k}$. 
\end{lemma}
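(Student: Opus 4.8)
The plan is to make everything local at the node and then run it through the parahoric dictionary already in place. Write $R_l = \C[[x,y,s]]/(xy-s^l)$, so that $C^*_l = \ec R_l - (0,0,0)$, and recall $\ec R_l \cong D_S^{\frac{1}{l}}//\mu_l$ with $D_S^{\frac{1}{l}} = \ec\C[[u,v]]$, $u^l = x$, $v^l = y$, $uv = s$, and $\zeta\cdot(u,v) = (\zeta u,\zeta^{-1}v)$. First I would apply proposition \ref{p:first1/2} (equivalently the extension across the twisted node used there) to extend $P$ to a $G$-bundle on the twisted disk $[\ec\C[[u,v]]/\mu_l]$. Since $\ec\C[[u,v]]$ is the spectrum of a complete local ring the underlying bundle is trivial, so this extension is exactly a $\mu_l$-equivariant structure on the trivial bundle, recorded by the action on the fibre over the fixed point $(0,0)$, i.e. a homomorphism $\rho\colon\mu_l\to G$.

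\textbf{Normalizing $\rho$ and passing to the alcove.} The first real step is to put $\rho$ in standard form. Because $\mu_l$ is diagonalizable, $\rho$ is conjugate into $T$, and every homomorphism $\mu_l\to T$ is the restriction of a co-character $\Cs\xrightarrow{\tl}T$; this is precisely the assertion quoted before \eqref{name.Theta.k} that the equivariant structure may be taken to be the restriction of a co-character, and establishing it is part of the content of this lemma. Combining $\tl$ with the loop rotation recorded by the $\mu_l$-action on $(u,v)$ yields an element of $\hom(\Cs,\Cs\x T)\ox\Q$, hence by \ref{s:notation.para} a parahoric $\mc{P}\subset LG$ depending only on the face of $Al$ in which the associated point lies. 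Thus $\mc{P}$ is conjugate to one of the standard parahorics $\mc{P}_I = P(\eta_I)$, and by remark \ref{rmk:k_G} the choice $k = k_G$ guarantees $k\eta_I \in \hom(\Cs,\Cs\x T)$.

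\textbf{Realizing $P'$ and comparing.} Because $k\eta_I$ is now integral, the same parahoric datum is realized by a genuine $\mu_k$-equivariant structure; feeding it into the double coset description \eqref{name.Psi.k} of $G$-bundles near a twisted node produces a $G$-bundle $P'$ on $C^*_k$ whose associated point of $Al$ agrees with that of $P$ up to the affine Weyl group. It then remains to compare $P$ and $P'$ after pulling both back along the two base-change maps $C^*_{lk}\to C^*_l$ and $C^*_{lk}\to C^*_k$ of the diagram preceding the statement. Both pullbacks carry affine co-characters lying in a single $\Wa$-orbit, so the task is to upgrade that abstract coincidence into an actual isomorphism of $G$-bundles over $C^*_{lk}$.

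\textbf{The main obstacle.} This last comparison is the hard part. The affine Weyl element relating the two data is a product of lattice translations and the loop-rotation reflection $s_0$, and these are \emph{not} isomorphisms of the bare equivariant structures $\rho$ but gauge transformations coming from $L_{C}G$ acting on the two branches of the node; executing them honestly is exactly what forces one to work with the double-coset presentation \eqref{name.Psi.k} rather than with $\rho$ alone. Concretely I expect to pass to the common cover $C^*_{lk}$, present both pullbacks as classes in $(L_uG \x L_vG)^{\mu_{lk}}/(G^\Delta)^{\mu_{lk}}$, and exhibit an element of $(L_{C}G)^{\mu_{lk}}$ conjugating one to the other. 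The integrality $k_G\eta_I\in\hom(\Cs,\Cs\x T)$ is precisely what makes the needed gauge element polynomial in the loop parameter, hence available over $C^*_{lk}$; this is the point at which the fixed value $k = k_G$, as opposed to a bound growing with $l$, is indispensable.
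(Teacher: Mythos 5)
Your first three steps track the paper's proof closely: reduce to a $\mu_l$-equivariant bundle on the cover, i.e.\ a homomorphism $\rho\colon\mu_l\to G$; conjugate $\rho$ into $T$ and lift it to a (fractional) co-character; normalize by the affine Weyl group into $Al$; identify the face, take $I$ with $\eta_I=\sum_{i\in I}\eta_i$, and use $k_G\eta_i\in\hom(\Cs,T)$ to define $P'$ on $C^*_k$. This is exactly how the paper sets things up. But the lemma is not proved by this setup; it is proved by the comparison over $C^*_{lk}$, and that is precisely the step you label ``the main obstacle'' and then only describe as an expectation (``I expect to pass to the common cover \dots and exhibit an element \dots conjugating one to the other''). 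Leaving that step as a plan is a genuine gap: nothing in your proposal actually produces the isomorphism, nor even reduces its existence to a checkable statement.

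Moreover, the route you sketch for closing the gap is not the one that works, and I doubt it can be made to work as stated. The two pullbacks carry equivariant structures given by $\frac{1}{l}\eta$ and $\eta_I$, which are in general \emph{non-conjugate} homomorphisms into $G$ (they need not even have the same order), so there is no single gauge transformation over $C^*_{lk}$ intertwining the two cocycles directly; also $(L_CG)^{\mu_{lk}}$ invokes a global curve $C$ that plays no role in this purely local lemma. The paper closes the argument differently: it reduces the claim to the assertion that the \emph{framed automorphism groups} of $P$ and $P'$ coincide, so that the two framed moduli problems $T_{G,\frac{1}{l}\eta}$ and $T_{G,\eta_I}$ on $[D_S^{\frac{1}{kl}}/\mu_{kl}]$ are quotients of the loop group data by the same subgroup. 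That equality is then proved by a structural argument, with no intertwiner constructed at all: the automorphism groups are connected, so it suffices to compare Lie algebras; these sit inside $G[[u,v]]^{\mu_{kl}}$, and writing $X_\a u^iv^j=X_\a u^{i-j}(uv)^j$ (with $uv$ being $\mu_{kl}$-invariant) reduces the two-variable invariants to the one-variable parahoric computation; finally, $\frac{1}{l}\eta$ and $\eta_I$ determine the \emph{same} parahoric because they lie over the same face of $Al$, by the argument following \eqref{para.etaI}. So the affine Weyl normalization you perform early on already does all the conjugating that is needed; what remains, and what your proposal omits, is the same-face $\Rightarrow$ same-automorphism-group computation.
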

\begin{proof}
The curve $C^*_l$ has an $l$-fold cover $C^*_1 \to C^*_l$. By \cite[Prop. 3.7]{Brav}, a $G$-bundle $P$ on $C^*_l$ is equivalent to a $\mu_l$ equivariant $G$-bundle on $C^*_1$, which in turn is determined by a homomorphism $\mu_l \to G$. 

Let $\zeta \in \mu_l$ be a generator and $\mu_l \to G$ a homomorphism; by abuse of notation let $\zeta$ also denote the image in $G$.  Then $\zeta \in G$ is a semisimple element any by \cite[Thm. 22.2]{Humph}, $\zeta$ lies in a Borel subgroup; by\cite[Cor. 19.3]{Humph} it follows that $\zeta$ lies in a maximal torus $T$ and thus we can take $\mu_l \to G$ to be the restriction of a co-character $\eta \in \Cs \to T \subset G$, but for any such $\eta$, the co-character $\eta^l$ will always define the trivial action.  Thus, setting $\mft_\Z = \hom(\Cs, T)$ and $\mft_\Q = \hom(\Cs,T)\ox_\Z \Q$, we can take $\eta \in \mft_\Z/ l \cdot \mft_\Z \subset \mft_\Q/\mft_\Z$ where the inclusion is given by $\eta \mapsto \frac{1}{l}\eta$.

Further, identifying $\eta$ with $\Cs \xrightarrow{(id, \eta)} \Cs \x T \subset \Gsd$, we can also transform by the affine Weyl group $W^{aff}:= N_{\Gsd}(\Cs \x T)/(\Cs \x T)$ and thus assume $\frac{1}{l}\eta \in Al$.   For some subset $I\subset \{0, 1, \dotsc,r = rk(G)\}$,  we can express $\eta = \sum_{i \in I} a_i \eta_i$ with $a_i \in (0,1) \cap \Q$  and $\eta_i \in \mft_\Q$ the vertices of $Al$. 

Consider $\eta_I = \sum_{i \in I} \eta_i$; because $k \eta_i \in \hom(\Cs,T) \forall i$ we have that $\eta_I$ determines a $G$-bundle $P'$ on $C^*_k$.  We claim $P,P'$ pull back to isomorphic bundles on $C^*_{k l}$.  For this it suffices to show $T_{G,\frac{1}{l}\eta}([ D_S^{\frac{1}{k l}}/\mu_{k l}  ] ) \cong T_{G,\eta_I}( [ D_S^{\frac{1}{k l}}/\mu_{k l}  ] )$ and this in turn reduces to showing that the framed automorphism groups of $P,P'$ coincide. The automorphism groups are connected so it reduces to a Lie algebra calculation.  As these are subgroups of $G[[u,v]]$ (with $x = u^{k l}, y = v^{k l}$) the Lie algebra is spanned by formal sums of $X_\a u^i v^j$. If $i \ge j$ then this is $X_\a u^{i - j} (u v)^j$ and $u v$ is fixed by $\mu_{k l}\ $Êso we are reduced to the one variable case; we can argue analogously if $j \ge i$.  Then the claim about automorphism groups follows because $\eta_I$ and $\frac{1}{l}\eta$ lie in the interior of the same face of $Al$; follow the argument in the paragraph after \eqref{para.etaI} in section \ref{s:notation.Lie}.  
\end{proof}

\subsubsection{Fixed Nodal curve}
Let $C_{0,[k]}$ be a twisted nodal curve with a single twisted node $p$.  Let $C_0$ be it's coarse moduli space and by abuse of notation we also write $p \in C_0$ for the node.  The stabilizer of $p \in C_{0,[k]}$ is $\mu_k$ and in particular $C_{0,[k]} \x_{C_0} D_0 \cong [D^{\frac{1}{k}}_0/\mu_k] = $.

For an parahoric $\mc{P}$ let $\mc{L} \mc{U}$ be its Levi decomposition and set $\mc{P}^\Delta = \Delta(\mc{L})\ltimes (\mc{U} \x \mc{U})$.  Similarly as in \ref{s:background.torsors} one can construct a sheaf of groups $\mc{G}^\Delta$ over $C_0$ such that $\mc{G}^\Delta(\hat \cO_{C_0,p}) = \mc{P}^\Delta$ and $\mc{G}^\Delta|_{C_0 - p} = \mc{G}^{std}$.  Let $\mc{M}_{\mc{G}^\Delta}(C_0)$ denote the moduli stack of $\mc{G}^\Delta$ torsors on $C_0$ and let $T_{\mc{G}^\Delta}(C_0)$ denote the moduli space of pairs $(\mc{F},\tau)$ where $\mc{F} \in \mc{M}_{\mc{G}^\Delta}(C_0)$ and $\tau$ a trivialization of $\mc{F}$ over $C_0 - p$.  Define $T_{\mc{G}^\Delta}(C_0)$ similarly.

Let $\eta \in \hom(\Cs, T) \ox_\Z \Q$ and consider the moduli stack $\mc{M}_{G,\eta}(C_{0,[k]})$ of $G$-bundles on $C_{0,[k]}$ with equivariant structure at $p$ determined by $\eta$.  Let $T_{G,\eta}(C_{0,[k]})$ denote the moduli space of pairs $(P,\tau)$ with $P\in \mc{M}_{G,\eta}(C_{0,[k]})$ and $\tau$ a trivialization of $P$ on $C_{0,[k]} - p$.  Define $T_{G,\eta}( [D^{\frac{1}{k}}_0/\mu_k])$ similarly.

The arguments of proposition \ref{p:para2equiv.smooth} and corollary \ref{c:para2equiv.smooth} readily extend to nodal curves and we obtain
\begin{prop}\label{p:para2equiv.node}
Suppose $k \eta \in \hom (\Cs, T)$ and set $\mc{P} = \mc{P}(\eta)$. Let $\cG^\Delta$,$C_0$,$C_{0,[k]}$, $D_0 = \ec \C[[x,y]]/xy$, $[D_0^{\frac{1}{k}}/\mu_k]$ be as above.  Choose $k$th roots $u,v$ of $x,y$ so that $D^{\frac{1}{k}}_0 = \C[[u,v]]/uv$.  Let $i_{0,[k]} \colon [D_0^{\frac{1}{k}}/\mu_k] \to C_{0, [k]}$ and $i_0 \colon D_0 \to C_0$ be the natural maps.  Let $G^\Delta_{u,v} = \{(g_1,g_2) \in L^+_uG \x L^+_v G| g_1(0) = g_2(0)\}$. Then we have isomorphisms
\[
\xymatrix{
T_{\cG^\Delta}(D_0) & \ar[l]_{i_0^*}T_{\cG^\Delta}(C_0)\ar[d]_{ \Psi_C^{\mc{P}^\Delta}}\ar[r]^{\Xi_{C_0}} & T_{G, \eta}(C_{0,[k]})\ar[d]^{ \Psi_C^\eta} \ar[r]^{i_{0,[k]}^*} & T_{G, \eta}([D_0^{\frac{1}{k}}/\mu_k])\\
& LG \x LG/\mc{P}^\Delta \ar[r]^{\eta (\ \ )\eta^{-1} \ \  \ \  \ \ } & (L_uG \x L_vG)^{\mu_k}/(G^\Delta_{u,v})^{\mu_k} & 
}
\]
where $\Xi_{C_0}$ is defined to be $(\Psi_C^\eta)^{-1}\circ \eta (\ \ )\eta^{-1} \circ \Psi_C^{\mc{P}^\Delta}$ and $\Psi_C^{\mc{P}^\Delta}$ is the map in \eqref{name.Psi}, $\Psi_C^\eta$ the map in \eqref{name.Psi.k}, and the last map is the product of $g(z)\mc{P} \mapsto \eta(w) g(w^k)\eta^{-1}(w) (L^+_wG)^{\mu_k}$.  The isomorphism $\Xi_C$ descends to an isomorphism $\Xi \colon \mc{M}_{\cG^\mc{P}}(C_0) \to \mc{M}_{G, \eta}(C_{0,[k]})$. 

\hfill $\square$
\end{prop}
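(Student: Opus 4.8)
The plan is to mirror the proofs of Proposition \ref{p:para2equiv.smooth} and Corollary \ref{c:para2equiv.smooth}, replacing the single loop group $LG = G((z))$ by the two-branch data $L_uG \times L_vG$ forced by the node, where $\zeta \in \mu_k$ acts by $(u,v)\mapsto(\zeta u, \zeta^{-1}v)$ and $uv = s$ is the invariant gluing the two branches. First I would construct inverses to $i_0^*$ and $i_{0,[k]}^*$ by descent, exactly as in Theorem \ref{thm:DCC}: by \cite{Drin} a $\cG^\Delta$-torsor (resp. a $G$-bundle with equivariant structure $\eta$) is trivial over $C_0 - p$, so comparing a local trivialization near $p$ with the fixed one over $C_0 - p$ yields descent data in $L_uG \times L_vG$ (resp. in $(L_uG\times L_vG)^{\mu_k}$), and fppf descent glues it back. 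This reduces the claim that $\Psi_C^{\mc{P}^\Delta}$ and $\Psi_C^\eta$ are isomorphisms to their local counterparts over $D_0$ and $[D_0^{\frac{1}{k}}/\mu_k]$, which are the node double-coset maps \eqref{name.Psi}, \eqref{name.Psi.k}, built as in \eqref{eq:nodeDCC}.

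The only genuinely new content is the local identification along the two branches. On the $u$-branch, where $\zeta$ acts by $u\mapsto\zeta u$, the computation of Proposition \ref{p:para2equiv.smooth} applies verbatim: conjugation $\ga(z)\mapsto\eta(u)\ga(u^k)\eta(u)^{-1}$ identifies $LG$ with $(L_uG)^{\mu_k}$ and carries $P(\eta)$ into $L^+_uG$. On the $v$-branch the coordinate action is $v\mapsto\zeta^{-1}v$, so the equivariance condition reads $\ga(v) = \eta(\zeta)^{-1}\ga(\zeta^{-1}v)\eta(\zeta)$; substituting $\xi = \zeta^{-1}$ this becomes $\ga(v) = \eta(\xi)\ga(\xi v)\eta(\xi)^{-1}$, which is precisely the condition of Proposition \ref{p:para2equiv.smooth} with $\eta$ replaced by $\eta^{-1}$. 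Hence $LG\cong(L_vG)^{\mu_k}$ via $\ga(z)\mapsto\eta(v)^{-1}\ga(v^k)\eta(v)$, and this opposite-sign conjugation carries $P(-\eta)$ into $L^+_vG$. The product of these two identifications is the branch-wise conjugation appearing in the statement.

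The main obstacle is to verify that this branch-wise conjugation carries $\mc{P}^\Delta = \Delta(\mc{L})\ltimes(\mc{U}\times\mc{U})$ isomorphically onto $(G^\Delta_{u,v})^{\mu_k}$. As in the last paragraph of the proof of Proposition \ref{p:para2equiv.smooth}, a root-by-root Lie-algebra computation controls the images: a root vector $X_\a z^i$ in the relevant parahoric satisfies the regularity bound $\l\eta,\a\r + i\ge 0$ (resp. $\l -\eta,\a\r + i\ge 0$) and is sent to $X_\a u^{k(\l\eta,\a\r + i)}$ (resp. $X_\a v^{k(\l -\eta,\a\r + i)}$), which is regular, so each branch lands in its positive loop algebra. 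What remains is to match the gluing at the node: the diagonal factor $\Delta(\mc{L})$, with $\mc{L} = L(\eta) = L(-\eta)$ the common Levi of \eqref{para.etaI}, is exactly the condition $g_1(0) = g_2(0)$ defining $G^\Delta_{u,v}$, while the factors $\mc{U}\times\mc{U}$ account for the independent unipotent directions mapping into the two branches' maximal ideals. I expect the careful bookkeeping of which unipotent radical ($U(\eta)$ versus $U(-\eta)$) attaches to the $u$- and $v$-branches, and the check that $\mu_k$-invariance is preserved jointly, to be the delicate point.

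Granting this local identification, the descent to moduli stacks is formal and follows Corollary \ref{c:para2equiv.smooth}. The forgetful maps $T_{\cG^\Delta}(C_0)\to\mc{M}_{\cG^\mc{P}}(C_0)$ and $T_{G,\eta}(C_{0,[k]})\to\mc{M}_{G,\eta}(C_{0,[k]})$ are essentially surjective and torsors for $L_{C_0}G$ (resp. $(L_{C_0}G)^{\mu_k}$) acting by change of trivialization over $C_0 - p$, giving $\mc{M}_{\cG^\mc{P}}(C_0)\cong L_{C_0}G\backslash(LG\times LG)/\mc{P}^\Delta$ and $\mc{M}_{G,\eta}(C_{0,[k]})\cong(L_{C_0}G)^{\mu_k}\backslash(L_uG\times L_vG)^{\mu_k}/(G^\Delta_{u,v})^{\mu_k}$. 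Since the branch-wise conjugation identifies $L_{C_0}G$ with its image in $(L_uG\times L_vG)^{\mu_k}$ compatibly with the two right quotients, the isomorphism $\Xi_{C_0}$ is equivariant for these actions and therefore descends to the asserted isomorphism $\Xi$.
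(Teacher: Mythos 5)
Your proposal is correct and is essentially the paper's own argument: the paper's proof of this proposition consists solely of the assertion, made immediately above the statement, that Proposition \ref{p:para2equiv.smooth} and Corollary \ref{c:para2equiv.smooth} ``readily extend'' to nodal curves, and your descent reduction to the node, branch-by-branch conjugation, and formal passage to the moduli stacks carry out exactly that extension. Your one genuinely new point --- that $\mu_k$-invariance on the $v$-branch forces the opposite-sign conjugation $\ga(z)\mapsto\eta(v)^{-1}\ga(v^k)\eta(v)$, so that the automorphism group at the node comes out as $\Delta(\mc{L})\ltimes(\mc{U}\x\mc{U}^-)$, with \emph{opposite} unipotent radicals attached to the two branches --- is in fact a correction to the proposition as literally printed (which defines $\mc{P}^\Delta$ with $\mc{U}\x\mc{U}$ and describes the horizontal map as a product of two same-sign conjugations, the second of which does not even land in the $\mu_k$-invariants), and it agrees with the conventions the paper itself adopts afterwards, namely $\mc{P}_I^{\Delta,\pm}=\Delta(L_I)\ltimes(\mc{U}_I\x\mc{U}_I^-)$ in Proposition \ref{p:orb_I} and the computation $H = Z(L_I)\x Z(L_I)\cdot L_I\ltimes(\mc{U}_I^-\x\mc{U}_I)$ in the proof of Theorem \ref{thm:Gbundles.chains.para.represented}.
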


\subsubsection{Connection with $\ol{\pGsd}$}
In \cite{Solis} a stacky embedding $\ol{\pGsd}$ of ${\pGsd}$ was constructed which is analogous to the wonderful compactification of a semisimple adjoint group.  In particular, the boundary $\ol{\pGsd} - \pGsd$ is a divisor with simple normal crossings.  In \cite{Solis} it was also shown that some of the orbits in the boundary are naturally isomorphic to moduli spaces of torsors on a nodal curve.  To be more precise, let $r = rk(G)$. It is shown that the boundary $\ol{\Gsd} - \Gsd$ breaks up into a union of $2^{r+1}$ orbits $\mathbf{O}_I$ labeled by the subsets $I$ of $\{0, \dotsc, r+1\}$ and $\mathbf{O}_I$ is further described by:

\begin{prop}\label{p:orb_I}
Let $L_I, \mc{P}^\pm_I, \mc{U}^\pm_I$ be as in \eqref{para.etaI} in section \ref{s:notation.Lie} and let $Z_0(L_I)$ be the connected component of the center $Z(L_I)$ of $L_I$.  Define $\mc{P}_I^{\Delta, \pm} = \Delta(L_I) \ltimes (\mc{U}_I \x \mc{U}_I^-)$. We have
\[
\mathbf{O}_I =
\frac{\pLG \x \pLG}{Z_0(L_I) \x Z_0(L_I) \cdot \mc{P}_I^{\Delta,\pm}}
\]
In particular, the orbit $\mathbf{O}_I$ fibers over $LG/\mc{P}_I \x LG/\mc{P}^-_I$ with fiber $L_{I,ad} = L_I/Z_0(L_I)$.  Further, when $I$ is a singleton set the group $Z_0(L_I)$ is trivial and when $I$ has cardinality $>1$ we have $Z_0(L_I) = Z(L_I)$.
\end{prop}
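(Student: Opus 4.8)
The plan is to exhibit $\mathbf{O}_I$ as a single orbit of $\pLG \x \pLG$ through a distinguished boundary point and then compute the stabilizer of that point directly. Recall from \cite{Solis} that $\ol{\pGsd}$ arises from a torus embedding of the maximal torus $\Cs \x T$ whose combinatorics are governed by the affine alcove, and that the $\pLG \x \pLG$-orbits in $\ol{\pGsd}$ are in bijection with the faces of the alcove via the torus orbits of this embedding (the $2^{r+1}$ faces of the $r$-simplex matching the $2^{r+1}$ boundary orbits). Accordingly I would take as base point $h_I := \lim_{t \to 0}\eta_I(t)$, the limit inside $\ol{\pGsd}$ of an integral multiple of the fractional cocharacter $\eta_I = \sum_{i \in I}\eta_i$ (taking a multiple changes neither the ray nor the limit point). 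This limit exists and lands in $\mathbf{O}_I$ precisely because $\eta_I$ lies in the relative interior of the face attached to the complement of $I$. With $\pLG \x \pLG$ acting by $(g_1, g_2)\cdot x = g_1 x g_2^{-1}$, the first step is to record that this action is transitive on $\mathbf{O}_I$ with base point $h_I$, so that $\mathbf{O}_I \cong (\pLG \x \pLG)/\mathrm{Stab}(h_I)$.

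The heart of the proof is the computation of $\mathrm{Stab}(h_I) = \{(g_1,g_2) : g_1 h_I g_2^{-1} = h_I\}$. The essential tool is the contraction retraction noted after \eqref{para.etaI}: for $g \in \pLG$ the limit $\lim_{t \to 0}\eta_I(t)\,g\,\eta_I(t)^{-1}$ exists if and only if $g \in \mc{P}_I = P(\eta_I)$, in which case it is the image of $g$ under the retraction onto $L_I$ that kills $\mc{U}_I$ (and symmetrically for $-\eta_I$, $\mc{P}_I^-$, $\mc{U}_I^-$). Applying this to the relation $g_1 h_I g_2^{-1} = h_I$, obtained as the $t\to 0$ limit of $g_1\eta_I(t)g_2^{-1}$, forces $(g_1,g_2) \in \mc{P}_I \x \mc{P}_I^-$; the unipotent components in $\mc{U}_I \x \mc{U}_I^-$ are then unconstrained because they are absorbed into $h_I$ in the limit, while the two Levi components must coincide after projection to $L_I/Z_0(L_I)$, the extra freedom by $Z_0(L_I)$ arising because $Z_0(L_I)$ is exactly the subtorus of $\Cs \x T$ contracted in forming $h_I$. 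This yields $\mathrm{Stab}(h_I) = (Z_0(L_I)\x Z_0(L_I))\cdot \mc{P}_I^{\Delta,\pm}$, hence the stated presentation of $\mathbf{O}_I$. The fibration over $LG/\mc{P}_I \x LG/\mc{P}^-_I$ with fiber $L_{I,ad} = L_I/Z_0(L_I)$ then follows formally: projecting $\pLG \x \pLG$ onto the product of affine flag varieties has fibers $\mc{P}_I \x \mc{P}_I^-$, and dividing a fiber by $\mathrm{Stab}(h_I)$ leaves $L_I/Z_0(L_I)$ by the factorization $\mc{P}_I^{\Delta,\pm} = \Delta(L_I)\ltimes(\mc{U}_I \x \mc{U}_I^-)$.

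It remains to verify the two assertions about the center, which reduce to a computation inside the maximal torus. By \eqref{para.eta} the roots of $L_I$ are spanned by the affine simple roots $\alpha_k$ with $k \notin I$, so $Z(L_I)$ is the common kernel of these characters and its component group is the torsion subgroup of the quotient of the cocharacter lattice by the lattice generated by the corresponding coroots. When $I = \{i\}$ is a singleton, deleting the single node $\alpha_i$ from the affine Dynkin diagram leaves a finite-type diagram of full rank $r$, so $L_{\{i\}}$ is semisimple, its center is finite, and therefore its identity component $Z_0(L_{\{i\}})$ is trivial. When $|I| > 1$ the center acquires a torus of dimension $|I|-1$, spanned rationally by the cocharacters $\{\eta_i\}_{i \in I}$ (each $\eta_i$ with $i \in I$ is orthogonal to every root of $L_I$ by \eqref{para.eta}); to conclude $Z_0(L_I) = Z(L_I)$ one must check that the quotient lattice above is torsion-free, which I would extract from $G$ being simply connected (so that $X_*(T)$ is the coroot lattice with the simple coroots as a basis) together with the relation expressing the affine coroot in terms of the finite coroots.

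The step I expect to be the main obstacle is the stabilizer computation, and in particular controlling the two limiting phenomena simultaneously: the collapse of the unipotent radicals $\mc{U}_I,\mc{U}_I^-$ into $h_I$ and the contraction of the central torus $Z_0(L_I)$. Making these precise requires the explicit local toric charts of $\ol{\pGsd}$ from \cite{Solis} rather than a purely formal group-theoretic argument, since one must also rule out spurious stabilizing directions arising from the infinite-dimensional nature of $\pLG$; here the simple-normal-crossings structure of the boundary established in \cite{Solis} does the decisive work. A secondary but genuinely delicate point is the torsion-freeness needed for $Z_0(L_I) = Z(L_I)$ when $|I|>1$, which rests on careful bookkeeping of the affine coroots and on the simply-connectedness of $G$.
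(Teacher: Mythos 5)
The first thing to note is that the paper contains no proof of this proposition: it is stated as a structure result imported from \cite{Solis}, where the embedding $\ol{\pGsd}$ is constructed, so there is no internal argument to compare yours against. Judged on its own terms, your overall strategy --- realize $\mathbf{O}_I$ as the $\pLG \x \pLG$-orbit of the limit point $h_I = \lim_{t\to 0}\eta_I(t)$ and compute the stabilizer by the contraction criterion defining $P(\pm\eta_I)$ --- is the standard De Concini--Procesi-style argument and is surely the shape of the proof in \cite{Solis}; your reliance on the orbit--face bijection and the toric charts of \cite{Solis} is legitimate given that the proposition itself lives there. One internal inconsistency worth fixing: with your own conventions ($h_I$ the $t \to 0$ limit of $\eta_I$, action $(g_1,g_2)\cdot x = g_1 x g_2^{-1}$), writing $g_1\eta_I(t)g_2^{-1} = \eta_I(t)\,[\eta_I(t)^{-1}g_1\eta_I(t)]\,g_2^{-1}$ and $g_1\,[\eta_I(t)g_2^{-1}\eta_I(t)^{-1}]\,\eta_I(t)$ forces the stabilizer into $\mc{P}_I^- \x \mc{P}_I$, not $\mc{P}_I \x \mc{P}_I^-$ as you assert; this is a harmless swap of conventions (replace $h_I$ by the $t\to\infty$ limit, or transpose the factors), but as written the signs do not cohere.

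The genuine gap is in the last step, the claim $Z_0(L_I) = Z(L_I)$ for $|I|>1$. The component group of $Z(L_I) = \bigcap_{\a \in \Phi_{L_I}}\ker \a \subset T$ is the torsion subgroup of $X^*(T)/\Z\Phi_{L_I}$, i.e.\ the \emph{character} lattice modulo the \emph{roots} of the Levi --- not, as you state, the cocharacter lattice modulo the coroots. This matters decisively: for simply connected $G$ the cocharacter lattice is the coroot lattice (which is why your criterion looks automatic, the simple coroots being a basis), but the character lattice is the full weight lattice, which makes torsion \emph{more} likely, not less. Concretely, take $G = SL_4$ and $I = \{0,2\}$, so $\Phi_{L_I} = \{\pm\a_1,\pm\a_3\}$ and $L_I = S(GL_2 \x GL_2)$. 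Its center is $\{(z_1 I, z_2 I) : (z_1 z_2)^2 = 1\}$, which has two components: equivalently $(\a_1+\a_3)/2 = \omega_1 - \omega_2 + \omega_3$ is an integral weight, producing $2$-torsion in $X^*(T)/\langle \a_1,\a_3\rangle$, and the element $\mathrm{diag}(1,1,-1,-1)$ lies in $Z(L_I)\setminus Z_0(L_I)$. So the lattice bookkeeping you propose cannot establish the claim, and indeed shows that simple-connectedness of $G$ alone points the wrong way; whatever makes this part of the proposition correct in \cite{Solis} (if it is correct as stated) must rest on additional input --- for instance the stacky structure of the embedding or a different ambient group in which $Z(L_I)$ is computed --- rather than on the argument you outline.
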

The isomorphisms of proposition \ref{p:para2equiv.node} allows us to identify $\mathbf{O}_{\{i\}}$ with $T_{G,\eta_i}([D_0^{\frac{1}{k}}/\mu_k])$, $T_{G,\eta_i}(C_{0,[k]})$; here $\eta_i$ is the $i$th vertex of $Al$. The natural expectation is that $T_{G,\eta_i}([D_0^{\frac{1}{k}}/\mu_k])$ can further degenerate to a moduli problem parametrized by the higher co-dimensional orbits in $\ol{\Cs \ltimes \pLG}$ and similarly with $T_{G,\eta_i}(C_{0,[k]})$.  We show that this is indeed the case in the next sub section.

\subsection{$G$-bundles on Twisted Chains}
In the previous section we saw that associated to the singleton sets $\{i\} \subset \{0, r+1\}$ there is a moduli space parametrizing $G$-bundles on a twisted nodal curve and further the moduli space can be identified with an orbit of the wonderful embedding of the loop group.  In this section we introduce a more general moduli problem which we show is isomorphic to the orbit $\mathbf{O}_I$ in the wonderful embedding for any $I \subset \{0, \dotsc, r+1\}$.

Let $R_n$ denote the rational chain of projective lines with $n$-components; figure \ref{chain3} in the introduction depicts a chain of length $3$.  There is an action of $\Cs$ on $R_n$ which scales each component.  Let $p_0, \dotsc, p_n$ denote the fixed points of this action.

Recall $u,v$ are $k$th roots of $x,y$ which are our coordinates near a node.  Let $p',p''$ be denote the closed points of $\ec \C[[u]],\ \ec \C[[v]]$ and finally let $D^{\frac{1}{k}}_n$ be the curve obtained from $\ec \C[[u]] \sqcup R_n \sqcup \ec \C[[v]]$ by identifying $p'$ with $p_0$ and $p''$ with $p_n$.

The group $\mu_k$ acts on $D^{\frac{1}{k}}_n$ through its usual action on $u,v$ and through the inclusion $\mu_k \subset \Cs$ on the chain $R_n$.  For an $n$-tuple $(\beta_0, \dotsc, \beta_n) \in \hom(\Cs,T)^n$, we can speak about the equivariant $G$-bundles on $D^{\frac{1}{k}}_n$ with equivariant structure at $p_i$ determined by $\beta_i$.  We refer to this equivalently as a $G$-bundles on $[D^{\frac{1}{k}}_n/\mu_k]$ of type $(\beta_1, \dotsc, \beta_n)$.

Further, we can also glue $[D^{\frac{1}{k}}_n/\mu_k]$ to $C_0 - p_0$ to obtain a curve $C_{n,[k]}$.  Let $C_n$ denote the coarse moduli space of $C_{n,[k]}$.
\begin{equation}\label{Cnk}
\text{We call $C_n$ a {\it modification} of $C_0$ and $C_{n,[k]}$ a {\it twisted modification} of $C_0$.}
\end{equation}

Recall the specific co-characters $\eta_0, \dotsc, \eta_r$ defined in \eqref{para.eta} in \ref{s:notation.para}. For $I = \{i_1, \dotsc, i_n\} \subset \{0, \dotsc, r\}$, let $T_{G,I}([D^{\frac{1}{k}}_n/\mu_k])$ denote the moduli space of pairs $(P, \tau)$ where $P$ is a $G$-bundles on $[D^{\frac{1}{k}}_n/\mu_k]$ of type $(\eta_{i_1}, \dotsc, \eta_{i_n}) $ and $\tau$ is a trivialization on $[\ec \C((u)) \x \C((v))/\mu_k]$. Let $H = Aut(P)$ then restriction to $\ec \C[[u]]$ and $\ec \C[[v]]$ realizes $H \subset (L_uG)^{\mu_k} \x (L_uG)^{\mu_k}$.

\begin{thm}\label{thm:Gbundles.chains.para.represented}
Let $I  \subset \{0, \dotsc, r\}$ and $T_{G,I}([D^{\frac{1}{k}}_n/\mu_k])$ be as above.  Then there is an isomorphism

\[
T_{G,I}(C_{0,[k]}) \xrightarrow{\Psi^{\eta_I}} (L_uG)^{\mu_k} \x (L_uG)^{\mu_k}/H \xrightarrow{\eta_I^{-1}(\ \ ) \eta_I}
\frac{\pLG \x \pLG}{Z(L_I) \x Z(L_I) \cdot \mc{P}_I^{\Delta,\pm}}.
\]
where $\Psi^{\eta_I}$ is as in \eqref{name.Psi.k} and $\eta_I^{-1}(\ \ ) \eta_I$ is described in proposition \ref{p:para2equiv.node}.  Let $i \colon [D^{\frac{1}{k}}_n/\mu_k] \to C_{0,[k]}$ be the natural map. Then $i^* \colon T_{G,I}(C_{0,[k]}) \to [D^{\frac{1}{k}}_n/\mu_k]$ is an isomorphism. In particular, $T_{G,I}(C_{0,[k]})$, $T_{G,I}([D^{\frac{1}{k}}_n/\mu_k])$ are isomorphic to an orbit in the wonderful embedding of $\ol{\pGsd}$.
\end{thm}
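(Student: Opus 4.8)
The plan is to reduce the theorem to the local double-coset description already developed and then to pin down the relevant stabilizer by an automorphism computation on the chain. Throughout I would use, as in Section \ref{s:notation.para}, that the groups $P(\eta)$ and their Levi and unipotent factors are connected, so that all the group-level identifications may be checked on Lie algebras.

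First I would dispose of the claim that $i^*$ is an isomorphism. Every $G$-bundle occurring here is trivial on the complement of the (twisted) node by Drinfeld--Simpson \cite{Drin}, so both $T_{G,I}(C_{0,[k]})$ and $T_{G,I}([D^{\frac{1}{k}}_n/\mu_k])$ are determined by data in a formal neighborhood of the node; hence restriction to that neighborhood is an isomorphism by the same formal descent argument used for Theorem \ref{thm:DCC} and Corollary \ref{c:para2equiv.smooth}. This reduces the statement to one about double cosets. The map $\Psi^{\eta_I}$ is then the double-coset map of \eqref{name.Psi.k}, and it is an isomorphism verbatim as in Proposition \ref{p:para2equiv.node}; composing with the conjugation isomorphism $\eta_I^{-1}(\ \ )\eta_I$ of that proposition identifies the equivariant loop groups $(L_uG)^{\mu_k}$, $(L_vG)^{\mu_k}$ with $\pLG$ and transports the stabilizer $H$ to a subgroup of $\pLG\x\pLG$. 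The whole theorem therefore reduces to the single assertion that, under this conjugation, $H$ becomes $Z(L_I)\x Z(L_I)\cdot \mc{P}_I^{\Delta,\pm}$; granting this, comparison with Proposition \ref{p:orb_I} exhibits the quotient as the orbit $\mathbf{O}_I$.

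The heart of the proof is thus the computation of $H=Aut(P)$ for $P$ the equivariant bundle on the chain $R_n$. I would compute it through the root-space decomposition of $\mathrm{ad}(P)$ along the chain: the $\Cs$-action scaling each component $X_j\cong\P^1$, together with the prescribed structures $\eta_{i_1},\dots,\eta_{i_n}$ at the fixed points, splits $\mathrm{ad}(P)|_{X_j}$ into equivariant line bundles indexed by roots $\alpha$, whose degrees are controlled by the integers $\langle \alpha,\eta_{i_j}\rangle$. An automorphism of $P$ over the whole chain is a compatible system of sections over the $X_j$, regular at the interior nodes and restricting to loops at the two ends $p_0,p_n$. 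Using $H^0(\P^1,\cO(d))=0$ for $d<0$ and the constancy of the degree-$0$ summands, the matching conditions force the loop at the $u$-end into $\mc{P}_I=\cap_{i\in I}P(\eta_i)$ and the loop at the $v$-end into $\mc{P}_I^-$, with the two Levi factors identified diagonally (the source of $\Delta(L_I)$) while the root directions with $\langle\alpha,\eta_I\rangle>0$, resp. $<0$, remain free and produce $\mc{U}_I$, resp. $\mc{U}_I^-$. The global automorphisms supported on the interior components act through the center and contribute the factor $Z(L_I)\x Z(L_I)$; carrying this out on Lie algebras and invoking connectedness upgrades the computation to the group level.

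The main obstacle is precisely this automorphism computation, and within it the central factors: one must show that the interior chain components contribute exactly $Z(L_I)\x Z(L_I)$ and that the Levi is diagonalized, so that $H$ comes out as $Z(L_I)\x Z(L_I)\cdot\mc{P}_I^{\Delta,\pm}$ rather than merely as some group with the correct Levi and unipotent radicals. Here I would watch the distinction between $Z(L_I)$ and $Z_0(L_I)$ flagged in Proposition \ref{p:orb_I}, together with the hypothesis $k\eta_I\in\hom(\Cs,T)$ and Remark \ref{rmk:k_G}, since $\mu_k$-invariance is what makes the central contributions match the orbit description; the bookkeeping separating singleton $I$ from $|I|>1$ is where the argument is most delicate.
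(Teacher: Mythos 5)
Your overall architecture does match the paper's: reduce to the formal neighborhood of the node, recast the theorem as the computation of the stabilizer $H = Aut(P)$ inside $LG \x LG$, and compute that stabilizer root by root via the splitting of $ad(P)$ into equivariant line bundles on the chain, with the vanishing of Lemma \ref{l:H0=0} controlling evaluation at the two ends. The portion of your computation producing $\Delta(L_I)$ and the unipotent factors $\mc{U}_I, \mc{U}_I^-$ is essentially the paper's argument (which carries it out on Lie algebras using the Martens--Thaddeus results, as you propose).

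The gap is the central factor, exactly where you yourself flag the difficulty. You attribute $Z(L_I)\x Z(L_I)$ to ``global automorphisms supported on the interior components,'' but under your own definition of an automorphism --- a compatible system of sections of $ad(P)$ over the components, i.e.\ an automorphism covering the identity of the chain --- no such automorphisms exist: an automorphism trivial at both ends corresponds to a section of $ad(P)\ox\cO(-p_0-p_n)$, which is zero by Lemma \ref{l:H0=0}, the very vanishing you invoke to embed $H_n$ via evaluation at the ends. Identity-covering automorphisms can only ever contribute the diagonal $\Delta(Z(L_I))$, coming from the constant sections of the Cartan summands. The ingredient you are missing is that $H$ also contains automorphisms of $P$ covering the nontrivial automorphisms $Aut(R_n)=(\Cs)^n$ of the chain itself; these are legitimate isomorphisms in the moduli problem because the chain is contracted in $C_0$, so they lie over the identity of $C_0$. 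Lifting the $j$th scaling $\Cs$-factor to $P$ requires the co-characters $\eta_{i_j}$, which land in $Z(L_I)$, and under evaluation at the two extreme points these lifts generate precisely a complement to $\Delta(Z(L_I))$ inside $Z(L_I)\x Z(L_I)$. Without this second contribution your stabilizer comes out as $\Delta(L_I)\ltimes(\mc{U}_I^-\x\mc{U}_I)$ with only diagonal center; the resulting quotient fibers over $LG/\mc{P}_I\x LG/\mc{P}_I^-$ with fiber $L_I$ rather than $L_I/Z_0(L_I)$, so the comparison with $\mathbf{O}_I$ in Proposition \ref{p:orb_I} fails and the theorem is not recovered. (A secondary imprecision: conjugation by $\eta_I$ identifies $(L_uG)^{\mu_k}$ with the full loop group $LG$, not with $\pLG$; the passage to $\frac{\pLG\x\pLG}{Z(L_I)\x Z(L_I)\cdot\mc{P}_I^{\Delta,\pm}}$, including the appearance of the opposite parahoric, rests on $LG/\mc{P}_I = \pLG/(\mc{P}_I\cap\pLG) \cong \pLG/(\mc{P}_I^-\cap\pLG)$, which the paper cites from Kumar and which your proposal skips over.)
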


\begin{proof}
That $i^*$ is an isomorphism follows formally so we focus on showing that $T_{G,I}([D^{\frac{1}{k}}_n/\mu_k])$ is isomorphic to the stated homogeneous space. We suppress the isomorphism $\eta_I^{-1}(\ \ ) \eta_I$ and work inside $G((x)) \x G((y))$ with the help of the identification $[\ec \C((u)) \x \C((v))/\mu_k] = \ec \C((x)) \x \C((y))$.

The strategy is the same as in the proof of proposition \ref{p:para2equiv.smooth} and corollary \ref{c:para2equiv.smooth}  above. Namely, fix an object $(P,\tau)$ of $T_{G,I}([D^{\frac{1}{k}}_n/\mu_k]) $. The restriction of $P$ to $\ec \C[[x]] \sqcup \ec \C[[y]]$ is necessarily trivial and comparing with $\tau$ produces loops in $G((x)) \x G((y)) = LG \x LG$.  Loops are identified that differ by an automorphism of $P$ over $[D^{\frac{1}{k}}_n/\mu_k]$; that is, an element of $H$.  We will show $H \cong Z(L_I) \x Z(L_I)\cdot  \mc{P}_I^\Delta \subset LG \x LG$.  Then we notice that
\[
\frac{LG \x LG}{Z(L_I) \x Z(L_I) \cdot \mc{P}_I^\Delta} \cong \frac{\pLG \x \pLG}{Z(L_I) \x Z(L_I)\cdot \mc{P}_I^{\Delta,\pm}},
\]
The above isomorphism holds because for $\mc{P}_{I,poly} = \mc{P}_I \cap \pLG$ we have $LG/\mc{P}_I = \pLG/\mc{P}_{I,poly} \cong \pLG/\mc{P}^-_{I,poly}$; these statements are proved in \cite[7.4]{Kumar}. 

We turn now to computing $H = Aut(P)$. Let $H_u = Aut(P|_{[\ec \C[[u]]/\mu_k]})$, $H_n = Aut(P|_{[R_n/\mu_k]})$ and $H_v = Aut(P|_{[\ec \C[[v]]/\mu_k]})$.  Let $ev_u \colon H_u \to G$ be the restriction of an automorphism to the special point; define $ev_v$ similarly.  Finally let $ev_{0,n} \colon H_n \to G \x G$ be the restriction of an automorphism to the two extreme points of $[R_n/\mu_k]$.  Then we have $H = \{(f_u,f_n,f_v) | ( ev_u(f_u), ev_v(f_v) ) = ev_n (f_n) \} \subset H_u \x H_n \x H_v$.

By \ref{p:para2equiv.smooth}, we have $H_u \x H_v = P(\eta_{i_0}) \x P(\eta_{i_n})$.

We now compute $H_n$. Let $E = E(\eta_{i_1}, \dotsc, \eta_{i_n})$ denote $P|_{[R_n/\mu_k]}$.  In fact, automorphisms of $G$-bundles on $[R_n/\Cs]$ have been computed by Martens and Thaddeus in \cite{Martens}. They consider a slightly different situation where they fix $\eta_{i_0} = \eta_{i_n} = 0$, but we can still use the same methods to handle our case.  

Then results of \cite{MartensThaddeus} imply $H_n$ is connected so we pass to $Lie(H_n) = H^0([R_n/\mu_k], ad E)$.  Let $ev_{0,n}$ denote also the map on Lie algebras $ev_{0,n} \colon H^0([R_n/\mu_k], ad E) \to \mfg \oplus \mfg$. The map $ev_{0,n}$ is embedding because $\ker ev_{0,n} = H^0([R_n/\mu_k], ad E(\eta_I)\ox \cO(-p_0 -p_n)) = 0$ by lemma \ref{l:H0=0}.  For a tuple of integers $(b_0, \dotsc, b_n)$ let $\cO(b_0, \dotsc, b_n)$ denote line bundle on $[R_n/\mu_k]$ with equivariant structure at the fixed points $p_i$ given by $b_i$. Then $ad E \cong \oplus_{i = 1}^{rk(G)} \cO(0, \dotsc, 0) \oplus_{\a \in \Delta} \cO(\a \circ \eta_{i_1} , \dotsc, \a \circ \eta_{i_n})$ and we can compute separately for each $\a$.

For $\a$ a root of $L_I$ we have $\a \cdot \eta_{i_j} = 0$ and these roots contribute a factor of $\Delta(Lie(L_I))$ to the image of $ev_{0,n}$.  If $\a$ is negative, $\l \a, \eta_{i_0}\r = 0$, and some other $\l \a, \eta_{i_j} \r < 0$ then there is a consecutive subset $\{ i_0, i_1,\dotsc, i_{j'}\}$ such that $0 = \l \eta_{i_0}, \a\r = \dotsc = \l \eta_{i_{j' -1}} \r$ and $\l \eta_{i_{j'}}, \a \r <0$.  Then  \cite[1.2(c)]{Martens} implies $(X_\a, 0) \in \mfg \oplus \mfg$ lies in the image of $ev_{0,n}$; in fact \cite[1.2(c)]{Martens} ensures there is such a $\Cs$-invariant section which is then necessarily $\mu_k$ invariant.  Similarly, if $\a$ is positive, $\l \a , \eta_{i_n} \r = 0$ and some other $\l  \a, \eta_{i_j} \r > 0$ then the image contains $(0, X_\a)$.  

There is a second contribution to the group $H_n$. Namely, we can lift $Aut(R_n) = (\Cs)^n$ to $E$.  Describe $R_n = \cup_{i=1}^n C_i$ as a chain of $\P^1$s going from left to right with fixed points $p_{j-1}, p_j \in C_j$ on the $j$th component. Let $(\Cs)_j$ be the $j$th $\Cs$ factor in $Aut(R_n)$. Then lifting $(\Cs)_j$ to $E$ requires a homomorphism $\Cs \to G$ for each fixed point.  This homomorphism must be $\eta_{i_{j-1}}$ at $p_{j-1}$ and by continuity it must also be $\eta_{i_{j-1}}$ on all $C_i$ with $i < j$.  Similarly the lifting is determined by $\eta_{i_j}$ on all $C_i$ with $i>j$.

Let $P_I \subset G$ be the parabolic associated to the co-character $\eta_I = \sum_{i_j \in I} \eta_{i_j}$ and let $L_I U_I$ be its Levi decomposition. 
Each $\eta_{i_j}$ maps into $Z(L_I)$ and under $ev_{0,n}$ generates a complement to $\Delta(Z(L_I)) \subset Z(L_I) \x Z(L_I)$.  Also $U_I$ consists of those $X_\a$ with $\a >0$ such that $\l \eta_{i_j}, \a\r > 0$ for some $i_j$. Altogether, we get that $H_n = Z(L_I) \x Z(L_I) \cdot \Delta(L_I) \ltimes (U^-_I \x U_I)$.

Consulting \eqref{para.etaI} in section \ref{s:notation.para} and comparing the computations of $H_u,H_v,H_n$, we conclude that $H = Z(L_I) \x Z(L_I) \cdot L_I \ltimes (\mc{U}_I^- \x \mc{U}_I) \cong Z(L_I) \x Z(L_I) \cdot \mc{P}_I^\Delta$.
\end{proof}

\begin{lemma}\label{l:H0=0}
For $\{i_0, \dotsc, i_n\} \subset \{0,\dotsc, r\}$ let $E = E(\eta_{i_0}, \dotsc, \eta_{i_n})$ be the $G$-bundle with splitting type $(\eta_{i_0}, \dotsc, \eta_{i_n})$.  Then $H^0([R_n/\mu_k],ad E \ox \cO(-p_0 - p_n)) = 0$
\end{lemma}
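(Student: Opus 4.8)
The plan is to decompose $ad E$ into the line bundles exhibited in the proof of Theorem \ref{thm:Gbundles.chains.para.represented} and to treat each summand separately. Writing $c_\ell := \l \a, \eta_{i_\ell}\r$ for the weight at $p_\ell$, one has $ad E \cong \bigoplus^{rk(G)} \cO(0,\dots,0) \oplus \bigoplus_{\a \in \Delta} \cO(c_0,\dots,c_n)$, and since $H^0$ is additive it suffices to show that each factor tensored with $\cO(-p_0-p_n)$ carries no global section vanishing at the two extreme points $p_0,p_n$. For the $rk(G)$ Cartan factors the bundle is trivial (the torus acts trivially on $\mft$), so a section is constant along the connected chain and vanishing at $p_0$ forces it to be zero. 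The work is therefore confined to a single root summand $L_\a := \cO(c_0,\dots,c_n)$.

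First I would record two structural facts. The group $\mu_k$ acts on the fiber of $L_\a$ at $p_\ell$ through $\zeta \mapsto \zeta^{kc_\ell}$, and the restriction of $L_\a$ to the $j$-th component $C_j\cong\P^1$ has honest degree $m_j = k(c_j-c_{j-1})$. Because each $\eta_{i_\ell}$ is a \emph{vertex} of the alcove $Al$, for a positive root $\a$ one has $c_\ell \in [0,1]$ with $c_\ell \in \Z$ precisely when $c_\ell \in \{0,1\}$ (symmetrically $c_\ell\in[-1,0]$ for $\a$ negative), so $m_j \in [-k,k]$. This yields a \emph{weight obstruction}: the value at $p_\ell$ of a $\mu_k$-invariant section of $L_\a$ lies in the $\mu_k$-invariants of a one dimensional representation of weight $kc_\ell$, hence is zero automatically unless $c_\ell \in \Z$. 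It also yields the \emph{absence of interior sections}: expanding the invariant monomials of $L_\a|_{C_j}$ in a coordinate on $C_j$, the bound $c_j\le 1$ (equivalently $m_j\le k$) leaves no room for an invariant monomial strictly between the two fixed points, so every invariant section of $L_\a|_{C_j}$ is determined by its two node values and vanishes as soon as both of them do.

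Granting these, the proof reduces to showing that every node value $s(p_\ell)$ of an invariant section $s$ vanishing at $p_0,p_n$ is zero, for then $s$ restricts to zero on each $C_j$. By the weight obstruction the only nodes where $s(p_\ell)$ can be nonzero are those with $c_\ell \in \{0,1\}$. At a node with $c_\ell = 1$ the component immediately to its right has degree $m = k(c_{\ell+1}-1)\le 0$; if this degree is negative the restriction of $s$ there is forced to vanish, pinning $s(p_\ell)=0$, while if it is zero I would propagate across the maximal $c\equiv 1$ plateau to its right-hand end, which is either the marked point $p_n$ (where $s$ vanishes by hypothesis) or a strict drop producing a negative-degree component. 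Either way $s(p_\ell)=0$; the floors $c_\ell=0$ are handled symmetrically by propagating leftward to $p_0$, and negative roots by the mirror-image argument.

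The hard part will be exactly this propagation and the two facts underpinning it: a naive degree count is useless, since the interior components can have degree as large as $k$ and the degree-zero components carry genuine sections. What makes the argument close is the interaction of the $\mu_k$-weight obstruction with the alcove bound $0\le c_\ell\le 1$ — it is precisely here that being at a \emph{vertex} of $Al$, rather than at an arbitrary rational coweight, is used, and this is the vanishing counterpart of the existence-of-sections input \cite[1.2(c)]{Martens} invoked in the proof of Theorem \ref{thm:Gbundles.chains.para.represented}.
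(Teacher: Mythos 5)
Your proposal is correct and takes essentially the same route as the paper: decompose $ad E$ into the equivariant line bundles $\cO(\a\circ\eta_{i_0},\dotsc,\a\circ\eta_{i_n})$, use the $\mu_k$-weight obstruction forced by the alcove bounds $0\le \l \a,\eta_{i_\ell}\r \le 1$ (equivalently, component degrees bounded by $k$) to kill sections except at nodes with integral weight, and then propagate vanishing along degree-zero plateaus until a negative-degree component or one of the twisted endpoints $p_0,p_n$ is reached. Your packaging in terms of node values and plateaus is a cleaner rewording of the paper's argument, which phrases the same propagation via the extremal monomials $x_0^k, x_1^k$ on degree-$k$ components.
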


\begin{proof}
For a tuple of integers $(b_0, \dotsc, b_n)$ let $\cO(b_0, \dotsc, b_n)$ denote line bundle on $[R_n/\mu_k]$ with equivariant structure at $p_i$ given by $b_i$. Then $\cO(-p_0 - p_n) = \cO(-1,0,\cdots, 0,1)$.  We remind the reader that a single subscript $\eta_l$ denotes a specific co-character with $l$ ranging from $\{0,\dotsc, r\}$ and double subscripts $\eta_{i_j}$ are used to denote ordered subsets $\{i_1, \dotsc, i_n\} \subset \{0,\dotsc, r\}$.

We have $ad E = \oplus_{i = 1}^{rk(G)} \cO(0,\dotsc, 0) \oplus_{\a \in \Delta} \cO(\a \circ \eta_{i_0}, \dotsc, \a \circ \eta_{i_n})$.   Clearly the trivial summand poses no problem.  By symmetry we can focus on $\a$ positive, in which case we show that
\[
H^0([R_n/\mu_k],\cO(\a \circ \eta_{i_0}, \dotsc, \a \circ \eta_{i_n}) \ox  \cO(-p_0 - p_n) ) = H^0([R_n/\mu_k], \cO(\a \circ \eta_{i_0}-1, \dotsc, \a \circ \eta_{i_n}+1) ) = 0.
\]
Because all the $\eta_i$ are in the Weyl alcove we have all $\a \circ \eta_{i_j}\ge 0$.  Also for the longest root $\theta = \sum_i n_i \a_i$ we have $1 = \theta \circ \eta_j = \sum_i n_i (\a_i \circ \eta_j)$ and all the $n_i\ge 1$.  This implies $\a_i \circ \eta_j = \frac{1}{n_i}\delta_{i,j}$.

Express our fixed $\a = \sum_i m_i \a_i$ with $0\le m_i\le n_i$. Let us first establish some properties of $\cO(\a \circ \eta_l,\a \circ \eta_{l'})$ on $[\P^1/\mu_k]$.  The degree of $\cO(\a \circ \eta_l,\a \circ \eta_{l'})$ is 
\begin{equation}\label{<= k}
d = k\a\circ (\eta_l - \eta_{l'}) = k(\frac{m_I}{n_l} - \frac{m_{l'}}{n_{l'}})
\end{equation}
and we note $d$ is an integer with $|d| \le k$.  Provided $d \ge 0$ the global sections are spanned by the $\mu_k$-invariant monomials $x_0^{d-d'}x^{d'}_1$ where $x_0$ has weight $\frac{k}{n_l}$ and $x_1$ has weight $\frac{k}{n_{l'}}$.  

We examine the restriction of $\cO(\a \circ \eta_{i_0}, \dotsc, \a \circ \eta_{i_n}) \ox  \cO(-p_0 - p_n)$ to various components. Clearly we can restrict to those components where the degree is $d>0$.  Below, when we restrict to the $j$ component $[\P^1/\mu_k]$ we set $l = {i_{j-1}}$ and $l' = i_j$ .

Suppose we restrict to a component with $d,m_{l'}>0$ then $n_l\ge m_l>d$ so all monomials of degree $d$ have nonzero weight provided the weight is less than $k$.  This holds because $x_0^d$ has the highest weight and it is $d\frac{k}{n_l}$ which is less than $k$. Consequently there are no sections.

We now assume $m_{l'} = 0$. If we restrict to the first component then tensoring with $\cO(-p_0 - p_n)$ lowers the degree by $1$ and again there are no sections.  Otherwise, if the degree of the $j$th component of the bundle is $k$ then there are two sections $x_0^k,x_1^k$ that we must show cannot extend to a global section.  Assume $x_1^k$ is non vanishing at $p_j$.  The degree on the $j+1$ component is either $0$ or negative. If the degree is $0$ then on the $j+2$ component the degree is either $0$ or negative.  Thanks to tensoring with $\cO(-p_0 - p_n)$ we are certain to eventually get a negative bundle which has no sections.  Therefore the section $x_1^k$ cannot extend.  But to extend the section $x_0^k$ on the $j-1$ component we need a section on bundle with degree $d>0$ and  $m_{l'}>0$ which is impossible by the previous paragraph.
\end{proof}

\begin{rmk}
For comparison with the $\Cs$ equivariant automorphisms see \cite[2.13,2.19]{Martens}.   
\end{rmk}

\begin{rmk}
In \eqref{<= k} we concluded that the degrees of the bundle on the chain have to be bounded by $k$.  It is worth noting that this recovers the moduli problem considered for $GL_r$ by Kausz \cite{K2}.  In this case one can work on non twisted curves; that is, with $k = 1$.  Then Gieseker bundles are exactly vector bundles on modifications of the curve such that the restriction to a chain splits as a direct sum $\cO$ and $\cO(1)$ and $H^0(R_n, E(-p'-p'')) = 0$. The latter condition implies $H^0(R_n,ad E(-p' - p'')) = 0$.
\end{rmk}

\section{Twisted Gieseker Bundles}\label{s:MainThm}
In this section we begin with a curve $C_S$ as in section \ref{s:notation.curve} and construct an algebraic $S$-stack $\mc{X}_G(C_S)$ such that $\mc{M}_G(C_S) \subset \mc{X}_G(C_S)$ is a dense open substack and the boundary is a divisor with normal crossings.  Further we show the morphism $\mc{X}_G(C_S) \to S$ is complete.

For the remainder of this section we fix a simple group $G$ as in section \ref{s:notation.Lie} and further fix an integer $k = k_G$ as in remark \ref{rmk:k_G}.  The only exception is proposition \ref{p:Mdf} where $k$ can be any integer $\ge 1$.

For convenience, we recall some of the notation from \ref{s:notation.curve}.  Namely, $S = \ec \C[[s]]$, $S^* = \ec \C((s))$, $S_0 = \ec \C[[s]]/(s) = \ec \C$, $C_0 = C_{S_0}$.  For $B$ an $S$-scheme we set $B^* = B\x_S S^*$, $B_0 = B \x_S S_0$.  We also have $D_S = \ec \C[[x,y]]$ considered as an $S$-scheme via $s \mapsto x y$ and $D_0 = \ec \C[[x,y]]/(xy)$. Further, we set $D_S^{\frac{1}{k}}:= \C[[u,v]]$ where $u^k = x$ and $v^k = y$.  Then $D_{S,[k]} = [D^{\frac{1}{k}}_S/\mu_k]$; the coarse moduli space of $D_{S,[k]}$ is $\ec \C[[x,y,s]]/(xy - s^k)$.  We further fix $p \in C_S$ to be the node.

To define $\mc{X}_G(C_S)$ we need to define twisted modifications of $C_S$; this is a relative version of \eqref{Cnk}.  Then in subsection \ref{s:completeness} we define $\mc{X}_G(C_S)$ to be the moduli stack parametrizing $G$-bundles on twisted modifications.  There we prove the main theorem which shows that $\mc{X}_G(C_S)$ satisfies the valuative criterion for completeness.

\subsection{Twisted Modifications}\label{s:orb.modification}
Let $C_S$ be a nodal curve.  A {\it modification of length $\le n$} of $C_S$ over $B$ is a curve $C'_B$ over $B$ with a morphism $C'_B \xrightarrow{\pi} C_B$ such that
\begin{itemize}
\item $C'_B$ is flat over $B$ and $\pi$ is finitely presented and projective
\item $C'_{B^*} \xrightarrow{\pi} C_{B^*}$ is an isomorphism
\item for $b \in B_0$ the map of curves $C'_b \xrightarrow{\pi} C_b$ is a modification; that is the fiber $\pi^{-1}(p_b)$ over the unique node $p_b \in C_b$ is a rational chain of $\P^1$s with at most $n$ components and there is $b\in B_0$ such that $\pi^{-1}(p_b)$ has exactly $n$ components.
\end{itemize}

\begin{ex}
In \cite[4.2]{Gieseker}, Gieseker constructs modifications of length $n$ over $B= \C[[t_1, \dotsc, t_{n+1}]]$ mapping to $\C[[s]]$ via $s \mapsto t_1 \dotsb t_{n+1}$ and where the maximum number of components in the modification is reached only over $(0, \dotsc, 0)$.  Further, the $i$th node is locally described by $B[[x,y]]/(x y - t_i)$.  

We recall the construction for $n = 1$; it is sufficient for our purposes to work with the curve $D_S = \C[[x,y]]$.  The base change to $\C[[t_1,t_2]]$ is $D_{[[t_1,t_2]]} := \ec \C[[t_1,t_2,x,y]]/(x y - t_1t_2)$ and the modification $D'_{[[t_1,t_2]]}$ is the blow up of $D_{[[t_1,t_2]]}$ along the ideal $(x,t_1)$. The fibers of the map $D'_{[[t_1,t_2]]} \to \ec \C[[t_1,t_2]]$ agree with those of $D_{[[t_1,t_2]]}$ except over $(0,0)$ where the node has been replaced by a chain of length $1$.

By a series of analogous blowups we obtain a modification $D'_{[[t_1, \dotsc, t_{n+1}]]} \xrightarrow{f} D_{[[t_1, \dotsc, t_{n+1}]]}$ of $D_S$ over $\ec \C[[t_1, \dotsc, t_{n+1}]]$ such that for $\emptyset \ne I \subset \{ 1, \dotsc, n+1\}$ the fiber of $f$ over $\{ t_i = 0 \}_{i \in I}$ is a modification $D_{| I | - 1}$ of $D_0$ of length $|I| -1$.  This local construction extends to give a modification $C'_{[[t_1, \dotsc, t_{n+1}]]}$ of $C_S$ over $\ec \C[[t_1, \dotsc, t_{n+1}]]$. Gieseker in fact proves this construction gives a versal deformation of the curve $C_n$ in \eqref{Cnk}.  We utilize this in the proof of theorem \ref{thm:isAlgStack}.
\end{ex}

Let $(g_1, \dotsc, g_n) \in (\Cs)^n$ act on $ \C[[t_1, \dotsc, t_{n+1}]]$ by $(t_1,\dotsc, t_n) \xrightarrow{(g_1,\dotsc,g_n)} (g_1 t_1, \frac{g_2}{g_{1}} t_2, \dotsc, \frac{g_{n}}{g_{n-1}} t_n, \frac{1}{g_n} t_{n-1})$.  This action extends to $C'_{[[t_1, \dotsc, t_{n+1}]]}$ such that for every closed point $q \in \ec \C[[t_1, \dotsc, t_{n+1}]]$ the stabilizer of $q$ in $(\Cs)^n$ coincides with $Aut(C'_q/C_q)$.  We set $Mdf_n = [ \C[[t_1, \dotsc, t_{n+1}]]/(\Cs)^n]$.  This is an algebraic $S$-stack that comes equipped with a curve $[C'_{[[t_1, \dotsc, t_{n+1}]]}/(\Cs)^n]$ and the modifications of $C_S$ over $B$ that arise from $S$-maps $B \to Mdf_n$ we call {\it local modifications} of length $\le n$.

A  {\it twisted modification} of length $\le n$ of $C_S$ over $B$ is a twisted curve $\mc{C}'_B$ such that its coarse moduli space $C'_B$ is a modification of length $\le n$ of $C_S$ over $B$. A twisted modification is of {\it order $k$} if the order of the stabilizer group of every twisted point has order exactly $k$.  Similarly, a twisted modification is of {\it order $\le k$} if the order of the stabilizer of every twisted point has order $\le k$.  A {\it local twisted modification} $\mc{C}'_B$ is a twisted modification whose coarse moduli space $C'_B$ is a local modification.  In the rest of this paper we work primarily with (twisted) local modifications.

\begin{rmk}
Restricting to local modifications is probably unnecessary but it simplifies our arguments and is sufficient to prove the main theorem.
\end{rmk}

Let $Mdf^{tw}_n$ denote the functor that assigns to $B \to S$ the groupoid of twisted local modifications of $C_S$ over $B$ of length $\le n$. Let $Mdf^{tw,k}_n \subset Mdf^{tw,\le k}_n$ be the functors of twisted local modifications of order $k$ and order $\le k$ respectively.  
\begin{prop}\label{p:Mdf}
Let $k\ge 1, n\ge 0$ be integers.  The functors $Mdf^{tw}_n$ and $Mdf^{tw,\le k}_n$ are algebraic stacks.  Further $Mdf^{tw,\le k}_n \subset Mdf^{tw}_n$ is an open substack and $Mdf^{tw,k}_n\subset Mdf^{tw}_n$ is a closed algebraic substack. Further, all of these stacks are locally of finite type.
\end{prop}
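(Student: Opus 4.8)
The plan is to study the three functors relative to the stack $Mdf_n = [\,\C[[t_1,\dots,t_{n+1}]]/(\Cs)^n\,]$ of local modifications, which we have already seen is an algebraic $S$-stack, locally of finite type, carrying the universal local modification. Passing to coarse moduli spaces defines a forgetful $1$-morphism $c\colon Mdf^{tw}_n \to Mdf_n$ sending a twisted local modification $\mc{C}'_B$ to its underlying local modification $C'_B$. Since the target is algebraic and locally of finite type, it suffices to prove that $c$ is representable by algebraic stacks and locally of finite type, for then $Mdf^{tw}_n$ inherits both properties. Equivalently, I would exhibit $Mdf^{tw}_n$ as the $2$-fibre product $Mdf_n \times_{\mathfrak{M}} \mathfrak{Tw}$, where $\mathfrak{M}$ is the stack of nodal curves, $Mdf_n \to \mathfrak{M}$ classifies the universal local modification, and $\mathfrak{Tw}\to \mathfrak{M}$ is the coarse-space morphism from the stack of twisted nodal curves; a $2$-fibre product of algebraic stacks is algebraic.

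Establishing the representability of $c$ is the heart of the matter, and it is where the structure theory of twisted curves recalled from \cite{MR2786662} is used; this is the step I expect to be the main obstacle. The key point is that a twisted structure is local at the nodes of $C'_B \to B$: by the local model $[D^{sh}/\mu_m]$ with $D = \ec\cO_{S,s}[u,v]/(uv - t)$, a twisting of index $m$ at a node is precisely the datum of an $m$-th root of the smoothing parameter $t$ together with the $\mu_m$-quotient, i.e.\ an $m$-th root stack along the node. For the universal family the $i$-th node is the Cartier divisor $\{t_i = 0\}$ and $t = t_i$, so an index-$m_i$ twisting there is the $m_i$-th root stack of $Mdf_n$ along $\{t_i=0\}$; root stacks along Cartier divisors are algebraic and of finite type. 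Because a local modification of length $\le n$ has at most $n+1$ nodes, assembling these finitely many independent root-stack choices over all tuples of indices yields an algebraic stack, locally of finite type over $Mdf_n$. The one genuinely arithmetic subtlety is compatibility with the map to $S$: since the node of $C_S$ is $xy = s$, an index-$m$ twisting forces the coarse node to be $xy = s^m$ and hence a matching $m$-th root of $s$, exactly as in Proposition \ref{p:first1/2}; one must check this constraint is part of the root-stack datum and leaves the construction algebraic, which it does.

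For the substack statements I would use that the order of the (cyclic) stabilizer at a twisted point is a discrete invariant which is locally constant along each of the at most $n+1$ node sections of the universal twisted modification, and which can only increase under specialization as new twisted points appear. Consequently the locus where some twisted point has order $>k$ is a finite union of sets of the form ``node $\nu$ is present and twisted of order $>k$''; each is closed, since $\{\nu\text{ present}\}$ is the vanishing of a smoothing parameter and the order is locally constant on it. Its complement $Mdf^{tw,\le k}_n$ is therefore open. For $Mdf^{tw,k}_n$ I would show the condition that every twisted point have order exactly $k$ is stable under specialization: a twisted node of order $k$ degenerates only into a chain of twisted nodes of the same order $k$, because the whole chain is a single $\mu_k$-quotient, as in the construction of $[D^{\frac{1}{k}}_n/\mu_k]$. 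Combined with the local constancy of the order along node sections, this shows $Mdf^{tw,k}_n$ is cut out by closed conditions, hence is a closed algebraic substack.

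Finally, all three stacks are locally of finite type: this holds for $Mdf_n$, is preserved by the relative root-stack construction (each fixed index-tuple contributes a finite-type piece and there are only countably many tuples), and passes to the open and closed substacks. To summarise, the routine inputs are the reduction to $Mdf_n$ via the coarse-space morphism and the discreteness of the stabilizer order, while the essential work --- and the main obstacle --- is the relative representability in the second paragraph, namely that twisted structures on the universal local modification, compatibly with the $S$-structure, are governed by an algebraic stack; I would carry this out by the \'etale-local root-stack model at each node together with the algebraicity of the stack of twisted curves from \cite{MR2786662}.
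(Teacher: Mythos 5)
Your main construction coincides with the paper's: the proof there is exactly the $2$-fibre product $Mdf^{tw}_n = Mdf_n \times_{\mc{S}_g} \mathfrak{M}^{tw}_g$ (and $Mdf^{tw,\le k}_n = Mdf_n \times_{\mc{S}_g} \mathfrak{M}^{tw,\le k}_g$), where $\mc{S}_g$ is the stack of all genus-$g$ nodal curves and $\mathfrak{M}^{tw}_g$ the stack of twisted curves; the algebraicity and local finite type of both stacks, the representability of the coarse-space morphism $\mathfrak{M}^{tw}_g \to \mc{S}_g$, and the openness of $\mathfrak{M}^{tw,\le k}_g \subset \mathfrak{M}^{tw}_g$ are all quoted from \cite{MR2786662}. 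Your second paragraph proposes to reprove these quoted inputs via the root-stack description of twisted curves; that is indeed how they are established in the literature, so for algebraicity, local finite type, and the open substack claim your route and the paper's are essentially the same.

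The genuine gap is your closedness argument for $Mdf^{tw,k}_n$. The assertion that ``a twisted node of order $k$ degenerates only into a chain of twisted nodes of the same order $k$, because the whole chain is a single $\mu_k$-quotient'' is not true in $Mdf^{tw}_n$: by definition a twisted modification is only \'etale-locally a cyclic quotient near each node, the twisting orders at distinct nodes are independent, and the global $\mu_k$-quotient structure is special to the models $[D^{\frac{1}{k}}_n/\mu_k]$ used elsewhere in the paper, not a property of arbitrary objects of the moduli problem. In fact the condition ``every twisted point has order exactly $k$'' is \emph{not} stable under specialization: take a twisted modification with two twisted nodes of orders $k$ and $m \ne k$; in its versal deformation, smoothing only the order-$m$ node produces a generization all of whose twisted points have order $k$, so this two-node object lies in the closure of the order-$k$ locus without belonging to it. Consequently no specialization argument can give closedness in all of $Mdf^{tw}_n$; the claim has to be understood relative to the open substack $Mdf^{tw,\le k}_n$, and that is how the paper proves it: $Mdf^{tw,k}_n = Mdf^{tw,\le k}_n \setminus Mdf^{tw,\le k-1}_n$ is the complement of the \emph{open} substack $Mdf^{tw,\le k-1}_n$ (pulled back from $\mathfrak{M}^{tw,\le k-1}_g$) inside $Mdf^{tw,\le k}_n$, hence closed there, with no deformation-theoretic input at all. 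Replacing your third paragraph by this complement-of-an-open argument repairs the proposal; as written, that step fails.
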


\begin{proof}
The basic tool is to use the stack of all genus $g$ curves.  For an integer $g$ let $\mc{S}_g$ denote the functor on $Sch$ which to any scheme $B$ assigns the groupoid of all (not necessarily stable) genus $g$ nodal curves $C \to B$.  In \cite[A]{MR2786662} it is shown that $\mc{S}_g$ is an algebraic stack locally of finite type; see also \cite[\textsection 5]{MR2309994}.  If $C'_B \to C_B$ is a local modification then forgetting the map to $C_B$ defines a morphism $Mdf \to \mc{S}_g$.

Let $\mathfrak{M}^{tw}_g$ be the functor which to any scheme $B$ assigns a genus $g$ twisted curve $\mc{C} \to B$.  In \cite[A]{MR2786662} it is shown that $\mathfrak{M}^{tw}_g$ is algebraic with a representable map to $\mc{S}_g$.  Further the sub functor $\mathfrak{M}^{tw,\le k}_g \subset \mathfrak{M}^{tw}_g$ of twisted curves with twisting or order $\le k$ is an open algebraic substack.  Then the result follows from
\begin{align*}
Mdf^{tw}_n &= Mdf_n \x_{\mc{S}_g} \mathfrak{M}^{tw}_g\\
Mdf^{tw,\le k}_n &= Mdf_n \x_{\mc{S}_g} \mathfrak{M}^{tw}_g,
\end{align*}
and that $Mdf^{tw, k}_n$ is the closed substack $Mdf^{tw, \le k}_n \backslash Mdf_n^{tw, \le k-1}$ where we have used \cite[06FJ,0509]{stacks} to conclude that open and closed substacks behave as expected. 
\end{proof}

Given a twisted modification $\mc{C}'_B$, we define subschemes $p_B,p'_B$ and $p'_b$ for $b \in B_0$  by the fiber product diagrams:
\[
\xymatrix{
p_B\ar[d]\ar[r] & p\ar[d] & &p'_B\ar[d]\ar[r] & p\ar[d] & & p'_b \ar[d]\ar[r] & p_0\ar[d]  \\
C_B\ar[r]\ar[d] & C_S\ar[d]& &\mc{C}'_B\ar[r]\ar[d] & C_S\ar[d] & & \mc{D}'_b \ar[d]\ar[r] & D_0\ar[d]\\
B\ar[r] & S& &B\ar[r] & S & &b\ar[r] & S_0
}
\]
Where $\mc{D}'_B := \mc{C}'_B \x_{C_B} D_B$ and $\mc{D}'_b$ is the restriction to $b \in B$. 

Notice that $p'_b$ is nothing other than the rational chain of $\P^1$s that appear in a modification over the fiber of the node.  Further, $p_B$ and $p'_B$ are defined so that the map $\mc{C}'_B - p'_B \to C_B - p_B$ is an isomorphism.

\subsection{The definition of twisted Gieseker bundles and the completeness statement}\label{s:completeness}
Let $r = rk(G)$; if $\mc{C}'_B$ is a twisted modification of length $\le r$, then a $G$-bundle on $\mc{C}'_B$ is called {\it admissible} if the co-characters determining the equivariant structure at all nodes are linearly independent over $\Q$ and are given by a subset of $\{\eta_0 \dotsc, \eta_r\}$; see \eqref{para.eta} in section \ref{s:notation.para} for the definition of the $\eta_i$.

Let $B$ be an $S$-scheme.  Define a groupoid $\mc{X}_G(C_S)$ over $S$-schemes by the assignment
\[
\mc{X}(C_S)(B) = \left\l \begin{array}{c} \xymatrix@R=.13in{ P_B\ar[d] & \\ \mc{C}'_B\ar[r] &  C_B} \\ \end{array} \right\r
\]
where $\mc{C}'_B$ is a twisted local modification of $C_B$ and $P_B$ is an admissible $G$-bundle on $\mc{C}_B$.  Isomorphisms are commutative diagrams
\[
\xymatrix{
P_B\ar[rr]^{\cong}\ar[d]&&Q_B\ar[d]\\
\mc{C}'_B\ar[dr]\ar[rr]^{\cong}&&\ar[dl]\mc{C}''_B\\
&C_B&
}
\]

For notational convenience we abbreviate $\mc{X}_G(C_S)(B)$ as $\mc{X}_G(B)$.

\begin{thm}\label{thm:isAlgStack}
The functor $\mc{X}_G = \mc{X}_G(C_S)$ is an algebraic stack locally of finite type.  It contains $\mc{M}_G(C_S),\mc{M}_G(C_{S^*})$ as dense open substacks and the complement of $\mc{M}_G(C_{S^*})$ is a divisor with normal crossings.
\end{thm}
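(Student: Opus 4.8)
The plan is to realize $\mc{X}_G$ as a relative moduli stack of bundles over the stack of twisted modifications. By Proposition \ref{p:Mdf} the stack $Mdf^{tw,k}_r$ of twisted local modifications of $C_S$ of order $k$ and length $\le r$ is algebraic and locally of finite type, and it carries a universal twisted curve $\pi\colon \mc{C}' \to Mdf^{tw,k}_r$ whose geometric fibers are proper Deligne--Mumford curves. I would then form the relative stack $\bg{\mc{C}'/Mdf^{tw,k}_r}$ of $G$-bundles on the fibers of $\pi$; algebraicity and local finite-typeness of $\mathrm{Bun}_G$ for a family of proper twisted curves is standard (one can descend along the coarse-space presentation, or use the twisted-curve formalism of \cite{MR2786662}). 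Finally, admissibility constrains the equivariant structure at each twisted point to lie in the finite set $\{\eta_0,\dots,\eta_r\}$ and to be $\Q$-linearly independent; since the conjugacy class of the representation $\mu_k \to G$ giving the equivariant structure (see \eqref{eq:equivariant.structure}) is locally constant in families, admissibility is an open-and-closed condition, and imposing it produces $\mc{X}_G$ as an algebraic stack locally of finite type.

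\textbf{The open substacks.} Over $S^*$ the curve $C_{S^*}$ is smooth, so no node is present and every modification is trivial; hence the restriction $\mc{X}_G \x_S S^*$ is exactly $\mc{M}_G(C_{S^*})$, which is open (the preimage of the open $S^* \subset S$) and dense (it dominates $S$). I would identify $\mc{M}_G(C_S)$ with the locus of trivial modifications, i.e.\ where the chain $\pi^{-1}(p)$ has length $0$. The length of the chain is upper semicontinuous on $Mdf^{tw,k}_r$, so the length-$0$ locus is open; it contains the dense generic fiber and is therefore dense as well. Both substacks are thus open and dense.

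\textbf{The normal-crossings boundary.} The complement of $\mc{M}_G(C_{S^*})$ is the special fiber $\{s=0\}$, and I would prove it is a simple normal crossings divisor by producing local charts. Away from the node $\mc{X}_G$ agrees with $\mc{M}_G$ of the smooth locus pulled back from $S$, where the only divisor is the smooth $\{s=0\}$. Near the node the key input is Theorem \ref{thm:Gbundles.chains.para.represented}: the moduli of admissible bundles on a twisted chain of type $I$ is the orbit $\mathbf{O}_I$ of the loop-group embedding $\ol{\pGsd}$ of \cite{Solis}. Combining this orbit-by-orbit identification with Gieseker's versal deformation of the chain $C_n$ from \eqref{Cnk} (in which $s = t_1\cdots t_{n+1}$, so that the curve-side boundary $\bigcup_i\{t_i=0\}$ is already a coordinate cross), I would exhibit an \'etale local model of $\mc{X}_G$ near a maximally degenerate bundle as (a smooth factor coming from the bundle away from the node) times an open neighborhood of $\mathbf{O}_I$ in $\ol{\pGsd}$. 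Since $\ol{\pGsd}$ is smooth with simple-normal-crossings boundary $\ol{\pGsd}-\pGsd$ (the defining property of the embedding in \cite{Solis}), transporting this structure along the local model shows that $\mc{X}_G$ is smooth and $\{s=0\}$ is normal crossings, with the orbit stratification $\{\mathbf{O}_I\}$ of Proposition \ref{p:orb_I} matching the stratification of the special fiber by chain length and type.

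\textbf{Main obstacle.} The delicate point is this last step: Theorem \ref{thm:Gbundles.chains.para.represented} matches only the \emph{strata} (fixed chain, fixed equivariant type) with the \emph{orbits} $\mathbf{O}_I$, whereas the normal-crossings claim is a statement about the total family transverse to those strata. The hard part will be checking that the deformation theory of a (curve, bundle) pair near the node is versally captured by a transverse slice to $\mathbf{O}_I$ in $\ol{\pGsd}$ -- that is, that Gieseker's smoothing parameters $t_i$ are precisely the normal coordinates to the boundary divisors of $\ol{\pGsd}$ -- so that the two stratifications agree not merely set-theoretically but with compatible local defining equations. Once this identification of deformation spaces is in hand, smoothness of $\mc{X}_G$ and the normal-crossings property of its boundary follow formally from the corresponding properties of $\ol{\pGsd}$.
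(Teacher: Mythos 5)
Your algebraicity argument is sound and is a genuine repackaging of the paper's proof: where you bootstrap $\mc{X}_G$ as the relative stack $\bg{\mc{C}'/Mdf^{tw,k}_r}$ of bundles over the algebraic stack of twisted local modifications (Proposition \ref{p:Mdf}) and then cut out the admissible locus, the paper instead verifies Artin's axioms directly following Heinloth, reducing each axiom to the corresponding statement for $Mdf^{tw,k}_n$ and for $\mc{M}_G(\mc{C}'_R)$ on a fixed twisted curve (lemma \ref{l:hom}). The inputs are the same and your route is arguably cleaner, with one caveat: admissibility is open but \emph{not} closed (a vacuously admissible bundle on the smooth generic curve can specialize to a bundle on a chain with arbitrary, e.g.\ repeated, equivariant structures), so ``open-and-closed'' should be weakened to ``stable under generization, hence open''; openness is all you need. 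A smaller slip: the locus of length-$0$ modifications is not $\mc{M}_G(C_S)$ --- it also contains every $D_i = \mc{M}_{G,\eta_i}(C_{0,[k]})$, since those bundles live on the \emph{twisted} but unmodified curve; you must additionally require the curve to be untwisted (the paper instead identifies $\mc{M}_G(C_S) = \mc{X}_G - \cup_{i\ne 0}\ol{D_i}$).

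The genuine gaps are in the second half of the statement. First, density: ``it dominates $S$'' and ``contains the dense generic fiber'' are not arguments --- a priori $\mc{X}_G$ could have components, or entire closed substacks such as some $\ol{D_i}$, concentrated over $s=0$ and not contained in the closure of $\mc{M}_G(C_{S^*})$; ruling this out is exactly the nontrivial smoothing statement. The paper obtains it, together with the decomposition $\mc{X}_G - \mc{M}_G(C_{S^*}) = \cup_{i=0}^r \ol{D_i}$, from the degeneration analysis in the proof of theorem \ref{Main Theorem} (a forward reference): every boundary object is shown, in the loop-group coordinates of theorem \ref{thm:Gbundles.chains.para.represented}, to lie in the closure of some $D_i$, the relevant orbits lying in the closure of $\pGsd$ inside $\ol{\pGsd}$. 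Second, the normal-crossings claim: you correctly assemble the ingredients and honestly flag the transversality matching as the ``main obstacle,'' but then leave it unresolved --- yet that matching \emph{is} the content of the claim. The paper's resolution is that theorem \ref{thm:Gbundles.chains.para.represented} identifies not merely the strata set-theoretically but the framed local moduli $T_{G,I}$ with the orbits of proposition \ref{p:orb_I}, so that the boundary $\ol{\pGsd} - \pGsd$, which has simple normal crossings by \cite{Solis}, furnishes an atlas for $\cup_i \ol{D_i}$, and the normal-crossings property descends along this atlas. As written, then, your proposal establishes algebraicity and openness of $\mc{M}_G(C_{S^*})$, but density of both substacks and the normal-crossings structure of the boundary remain unproven.
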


\begin{proof}

 We first show that $\mathcal{X}_G$ is a stack fibered in groupoids. Namely, we show (1) for $x, y \in \mathcal{X}_G (B)$ that $U \rightarrow Isom ( x|_U, y|_U)$ is a sheaf on $Sch / B$ and  (2) descent data is effective.

Objects $x, y$ as above consist of $G$-bundles on twisted modifications of order $k$ of some fixed length.  By proposition \ref{p:Mdf}, (1) and (2) holds for twisted local modifications and so it's enough to check (1) and (2) on the additional data of $G$-bundles on a twisted modification.  By definition, $G$-bundles are determined by local gluing data (so (2) holds).  Further, given two $G$-bundles $P,Q$ we can identify the isomorphisms $P \to Q$ as the sections of $P \x Q/G$ over the base and this forms a sheaf so (1) holds.
 
To show $\mc{X}_G$ is algebraic we adapt a proof \cite[Prop.1]{Hein} of Heinloth; namely we will verify Artin's axioms \cite[07Y3]{stacks}. First we recall some deformation theory of $G$-bundles. Let $A$ be a local Artin $\C[[s]]$-algebra with maximal ideal $m$ and residue field $k$.  Let $I \subset A$ be a nilpotent ideal such that $m I = 0$. An object $x \in \mc{X}_G(A/I)$ can be identified with a $G$-bundle $\ol{P}$ on a twisted curve $\mc{C}'_{A/I}$.  If $P$ is an extension of $\ol{P}$ over $A$ then the auomorphisms of $P$ inducing the identity on $\ol{P}$ are classified by $H^0( \mc{C}'_{A/I}, ad(\ol{P})\ox_{A/I} I)$.  The possible extensions are classified by $H^1( \mc{C}'_{A/I}, ad(\ol{P})\ox_{A/I} I)$ and obstructions lie in $H^2( \mc{C}'_{A/I}, ad(\ol{P})\otimes_{A/I} I) = 0$; see \cite{MR2583634} for the case of $GL_r$ and for general $G$ this can be deduced from the proof of \cite[Prop.1]{Hein}.

Artin's axioms can be stated as (1) $\Delta \colon \mc{X}_G \to \mc{X}_G \x \mc{X}_G$ is representable by algebraic spaces, (2)  If $B = \varprojlim B_i$ with $B,B_i$ affine then $\varinjlim \mc{X}_G(B_i) \to \mc{X}_G(B)$ is an equivalence, (3) $\mc{X}_G$ satisfies the Rim-Schlessinger (RS) condition, (4) $H^i( \mc{C}'_{A/I}, ad(\ol{P})\ox_{\C} I)$ for $i = 0,1$ are finite dimensional where $I = (\epsilon) \subset \C[\epsilon]/\epsilon^2 = A$, (5) formal objects come from Noetherian complete local rings $R \supset m$ with $R/m$ finite type over $S$, and (6) $\mc{X}_G$ satisfies openness of versality. We elaborate on (3),(5),(6) when we verify them below.

We also use that any algebraic stack locally of finite type over a locally noetherian base automatically satisfy (1) - (6); see \cite[07SZ]{stacks}.  In particular, the algebraic stack $Mdf^{tw,k}_n$ of proposition \ref{p:Mdf} satisfies (1) - (6). 

By \cite[Cor.3.13]{MR1771927}, we can verify (1) by showing $Isom(x,y) \colon Sch/U \to Sets$ is representable by an algebraic space for every $x,y \in \mc{X}_G(U)$. The objects $x,y$ can be identified with $G$-bundles $P,Q$ over a fixed curve $\mc{C}'_U$.  Then $Isom(x,y)$ can be identified with the sheaf of sections of $P\x_G Q = P \x Q/G$ crossed with $Aut(\mc{C}_U'/C_U)$ which is an algebraic space by \cite[thm 1.1]{MR1432041}.

Statement (2) amounts to showing for any $P \to \mc{C}'_B$ there is an index $j$, a modification $\mc{C}'_{B_j}$ and a $G$-bundle $P_j \to \mc{C}'_{B_j}$ such that $P \to \mc{C}'_B$ is pulled back from $P_j \to \mc{C}'_{B_j}$.  Because twisted local modifications form an algebraic stack we can reduce to showing this for the $G$-bundles.  That is there is a fixed $k$ such that if we define $\mc{C}'_{B_{j+k}}$ as the pull back of $\mc{C}'_{B_j}$ under $B_{j+k} \to B_j$ then $\mc{C}'_B = \varprojlim \mc{C}'_{B_{j+k}}$.  We must then show there is a $j$ such that $P \to \mc{C}'_B$ is pulled back from $P_{j+k} \to \mc{C}'_{B_{j+k}}$ and this follows because $G$-bundles are finitely presented.

For the RS condition suppose we have a pushout $Y' = Y \sqcup_X X'$ with (1)$X,X',Y,Y'$ spectra of local Artin rings of finite type over $S$ and (2) $X \to X'$ a closed immersion.  Then the RS condition states that the functor $\mc{X}_G(Y') \to \mc{X}_G(Y) \x_{\mc{X}_G(X)} \mc{X}_G(X')$ is an equivalence of categories. We show the functor is essentially surjective; that it is fully faithful is a formal argument we omit.  

The condition holds with $\mc{X}_G$ replaced with $Mdf^{tw,k}_n$ so we can assume the following situation
\[
\xymatrix{\mc{C}'_X \ar[r]\ar[d] & \mc{C}'_{X'}\ar[d]\\
\mc{C}'_Y \ar[r] & \mc{C}'_{Y'}}
\]
where all curves are pulled back from $\mc{C}'_{Y'}$.  We further have $G$-bundles $P_X,P_{X'}, P_{Y}$ on the respective curves such that $P_{X'}, P_Y$ extend $P_X$.  We can consider $P_{X'} \in \mc{M}_G(\mc{C}'_{Y'})(X')$ and similarly for $P_X,P_Y$.  The stack $\mc{M}_G(\mc{C}'_{Y'})$ is algebraic by lemma \ref{l:hom}. The latter satisfies the RS condition so there is a $G$-bundle $P_{Y'}$ extending all others and it is necessarily admissible because otherwise the bundles $P_X,P_{X'},P_Y$ would not be admissible.

Statement (4) follows readily because we work with twisted curves which have projective coarse moduli spaces.

A formal object is a triple $\zeta = (R, \zeta_n, f_n)$ where $(R,m)$ is a Noetherian complete ring, $\zeta_n \in \mc{X}_G(\ec R/m^n)$ and $\zeta_{n} \xrightarrow{f_n} \zeta_{n+1}$ are morphisms over $\ec R/m^{n} \to \ec R/m^{n+1}$.  There is a notion of morphisms of formal objects and they form a category. Any $\psi \in \mc{X}_G(R)$ gives rise to a formal object by restriction along $\ec R/m^n \to \ec R$; this is a functor from $\mc{X}_G(R)$  to formal objects over $R$. We must show this is an equivalence.  We show it is essentially surjective; that it is fully faithful follows formally.  

The argument is similar to the verification of (3). Assume now $(R, \zeta_n, f_n)$ is a formal object of $\mc{X}_G$.  Forgetting the data of the $G$-bundle produces a formal object of $Mdf^{tw,k}_l$ where $l$ is the length of modification at the closed point of $\ec R$.   Because $Mdf^{tw,k}_l$ is algebraic, the formal objects comes from a twisted modification $\mc{C}'_R$.  Now the original data of the $G$-bundles on the various $\mc{C}'_{R/m^n} = \mc{C}'_{\ec R} \x_{\ec R} \ec R/m^n$ define a formal object of the algebraic stack $\mc{M}_G(\mc{C}'_R)$ and hence there is a $G$-bundle extending them which, as in the verification of condition (3), is necessarily admissible.

Openness of versality is explained precisely in \cite[07XP]{stacks} but using the Kodoira-Spencer map \cite[2.7]{MR2583634}, as in  \cite[Prop.1]{Hein}, the statement can be simplified.  Let $P_R \to \mc{C}'_R$ be an object of $\mc{X}_G(\ec R)$ and let $P_{univ}$ be the universal bundle over $\mc{C}'_R \x_R \mc{M}_G(\mc{C}'_R)$ and let $\pi$ be the projection to $ \mc{M}_G(\mc{C}'_R)$. Then $P_R$ gives a map $\ec R \xrightarrow{f} \mc{M}_G(\mc{C}'_R)$ and there is an induced Kodoira-Spencer map $\mc{T}_R \to f^*(R^1_{\pi,*} ad(P_{univ}))$ where $\mc{T}_R$ denote the tangent sheaf of $\ec R$. Openness of versality means that this map being surjective is an open condition which follows because the locus where a map of coherent sheaves is surjective is open.  We conclude that $\mc{X}_G(C_S)$ is algebraic.

Let $\eta_i$ be the vertices of $Al$, then $D_i:=\mc{M}_{G,\eta_i}(C_{0,[k]}) \subset \mc{X}_G - \mc{M}_G(C_{S^*})$ and because we have fixed the value of $k$, $D_i$ appears only once in the boundary.  Further, the proof of theorem \ref{Main Theorem} below shows any object $\in \mc{X}_G - \mc{M}_G(C_{S^*})$ is in the closure of some $D_i$ hence $\mc{X}_G - \mc{M}_G(C_{S^*}) = \cup_{i = 0}^r \ol{D_i}$ and thus $\mc{M}_G(C_{S^*})$ is an open sub stack.  Using theorem \ref{thm:Gbundles.chains.para.represented} we conclude that the boundary of the wonderful embedding of the loop group in \cite{Solis} forms an atlas for $\cup_{i = 0}^r \ol{D_i}$ and the former has simple normal crossing singularities.  Finally, $\mc{M}_G(C_S) = \mc{X}_G - \cup_{i \ne 0} \ol{D_i}$, which is open.
\end{proof}

\begin{lemma}\label{l:hom}
Let $\mc{C} \to B$ be a twisted curve over a locally noetherian base $\C$-scheme $B$ and let $H$ be an affine algebraic group over $\C$.  Then the functor $\mc{M}_H(\mc{C}_B)$ which assigns to any $B' \to B$ the groupoid of principal $H$-bundles on $\mc{C}_B \x_B B'$ is an algebraic stack locally of finite type. 
\end{lemma}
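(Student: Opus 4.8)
The plan is to recognize $\mc{M}_H(\mc{C}_B)$ as a mapping stack and then reduce to a general algebraicity theorem for such stacks. Over $\C$ the affine algebraic group $H$ is automatically smooth (by Cartier's theorem), so its classifying stack $[\ec \C/H]$ is a smooth algebraic stack of finite type over $\C$. For any algebraic stack $\mc{Z}$, giving a principal $H$-bundle on $\mc{Z}$ is the same as giving a morphism $\mc{Z} \to [\ec \C/H]$; applying this with $\mc{Z} = \mc{C}_{B'}$ and remembering the structure map to $B'$ identifies
\[
\mc{M}_H(\mc{C}_B) \cong \underline{\operatorname{Hom}}_B\big(\mc{C},\, B\x [\ec \C/H]\big),
\]
the $B$-stack of $B$-morphisms from $\mc{C}$ to $B\x[\ec\C/H]$. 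Thus the lemma reduces to the algebraicity and local finiteness of this Hom-stack.

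Next I would check the hypotheses of a Hom-stack algebraicity theorem (in the form proved by Olsson, \emph{Hom stacks and restriction of scalars}, and in greater generality by Hall--Rydh, \emph{Coherent Tannaka duality and algebraicity of Hom-stacks}). The source $\mc{C}\to B$ is a twisted curve, hence a proper, flat, finitely presented Deligne--Mumford stack; since we work in characteristic $0$ it is tame and its diagonal is finite. The target $[\ec\C/H]\to \ec\C$ is a smooth algebraic stack, locally of finite presentation, whose diagonal is affine, because the isomorphism scheme of the trivial $H$-bundle is $H$ and $H$ is affine. These are exactly the hypotheses — proper flat tame Deligne--Mumford source, and a target locally of finite presentation with affine stabilizers — under which the cited theorems yield that $\underline{\operatorname{Hom}}_B(\mc{C}, B\x[\ec\C/H])$ is an algebraic stack locally of finite presentation. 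Since $B$ is locally noetherian, locally of finite presentation coincides with locally of finite type, which gives the claim.

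The main obstacle is that $[\ec\C/H]$ is \emph{not} separated once $H$ is positive dimensional: its diagonal is affine but not proper, so one cannot invoke the simpler version of the Hom-stack theorem requiring a separated target and must instead use the version permitting targets with affine (quasi-affine) stabilizers. If one prefers to avoid this black box, the alternative is to verify Artin's axioms directly, exactly as in the proof of theorem \ref{thm:isAlgStack} following Heinloth \cite{Hein}; the only inputs that change from the smooth-curve case are cohomological. Deformations and obstructions of an $H$-bundle $P$ on $\mc{C}_{A/I}$ are controlled by $H^i(\mc{C}_{A/I}, ad(P)\ox_{A/I} I)$, and because $\mc{C}$ is a proper Deligne--Mumford stack these groups are finite dimensional (coherent cohomology of a proper DM stack is finite), while $H^2$ vanishes because $\mc{C}$ has relative dimension $1$ over $B$ and hence coherent cohomological dimension $1$. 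These finiteness and obstruction-vanishing facts are precisely what is needed to run the verification of axioms (1)--(6) recorded via \cite[07SZ]{stacks}.
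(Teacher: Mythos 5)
Your proposal is correct, and its opening move coincides exactly with the paper's: both identify $\mc{M}_H(\mc{C}_B)$ with the mapping stack $Hom_B(\mc{C}, B \x [\ec \C/H])$ and then invoke a Hom-stack algebraicity theorem. Where you genuinely differ is in how the key technical obstruction is resolved — the fact, which you diagnose correctly, that $[\ec \C/H]$ has affine but not finite (in particular not proper) diagonal, so the basic separated-target versions of these theorems do not apply. The paper cites Olsson \cite{MR2194377} but must then verify by hand the one condition that is not automatic for such targets, namely effectivity of formal objects (condition (5) of Artin's axioms, displayed as \eqref{eq:hom}, the extra condition flagged via \cite{MR2258535}); it does so by choosing, after an \'etale base change, a finite flat cover $Z \to \mc{C}$ with $Z$ a projective scheme \cite{MR2309994}, reducing the effectivity statement to $Z$, and quoting Wang's algebraicity of $\mc{M}_H(Z_B)$ \cite{Wang}. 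You instead invoke Hall--Rydh, whose theorem admits targets with affine stabilizers outright, so no supplementary verification is needed. This is cleaner, at the cost of a heavier (and, relative to the paper, anachronistic) black box; the paper's route is more hands-on but shows concretely where the work lives.

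One caution about your fallback argument via Artin's axioms: finite-dimensionality of $H^0$, $H^1$ and vanishing of $H^2$ take care of the deformation-theoretic axioms, but they do not by themselves yield effectivity of formal objects (axiom (5)). Algebraizing a compatible system of $H$-bundles on the truncations $\mc{C}_{R/m^n}$ is a Grothendieck-existence statement for a proper Deligne--Mumford stack with non-proper coefficient target, and this is precisely the point where the paper needs the finite flat cover by a projective scheme and Wang's result (and where Aoki's original argument had its gap). So the direct route is viable, but it requires that additional ingredient, not only the cohomological facts you list.
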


\begin{proof}
Writing $pt = \ec \C$, we have $[pt/H] \to pt$ is a morphism of finite presentation hence so is $[pt/H] \x B \to B$.  We observe that $\mc{M}_H(\mc{C}_B) = Hom_B(\mc{C}, [pt/H] \x B)$ and apply \cite{MR2194377} to conclude the result.  Note we must check an additional condition from \cite{MR2258535}; namely that $\mc{M}_H(\mc{C}_B)$ satisfies condition (5) stated in the proof of theorem \ref{thm:isAlgStack}:
\begin{equation}\label{eq:hom}
Hom_R(\mc{C}_R, [pt/H] \x \ec R) \to \varprojlim Hom_{R/m^n}(\mc{C}_{R/m^n},[pt/H] \x \ec R/m^n)
\end{equation}
is an equivalence for any $\ec R \to B$ with $R$ a complete local Noetherian ring $R$.  By \cite{MR2309994}, after an \'etale extension on the base, there is a finite flat morphism $Z \to \mc{C}$ over $B$ with $Z$ a projective scheme.  As in \cite[pg. 50]{MR2194377}, we can verify \eqref{eq:hom} after replacing $\mc{C}$ with $Z$. Then $Hom_B(Z, [pt/H] \x B) = \mc{M}_H(Z_B)$ which is an algebraic stack locally of finite type by \cite{Wang}; in particular \eqref{eq:hom} holds by \cite[07SZ]{stacks}.
\end{proof}

We now come to the main theorem
\begin{thm}\label{Main Theorem}
Let $R = \C[[s]]$ and $K = \C((s))$; for a finite extension $K \to K'$ let $R'$ denote the integral closure of $R$ in $K'$.  Given the right commutative square below, there is finite extension $K\to K'$ and a dotted arrow making the entire diagram commute:
\[
\xymatrix{
\ec K' \ar[r]\ar[d] & \ec K\ar[d]\ar[r]^{h^*} & \mc{X}_G(C_S)\ar[d]\\
\ec R' \ar@{-->}[urr]^{ \ \ \ \ \ \ \ \ \ \ \  \ \ \ \ \ \ \ \ \ \ \ \ \ h}\ar[r] & \ec R\ar[r]^f & S
}
\]
\end{thm}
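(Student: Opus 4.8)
The plan is to reduce the valuative criterion for $\mc{X}_G(C_S)$ to the completeness of the wonderful embedding $\ol{\pGsd}$ of \cite{Solis}, in three stages: an extension of the bundle away from the node, a translation of the local picture at the node into the loop group via the double coset construction, and an application of completeness of $\ol{\pGsd}$ followed by a translation back to a $G$-bundle on a twisted chain.

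First I would reduce to a purely local problem near the node. The map $f \colon \ec R \to S$ is a ring map $\C[[s]] \to \C[[s]]$; writing $f(s) = s^m u$ with $u$ a unit and passing to a finite extension $K \to K'$ (adjoining a root to absorb $u$ and to provide the $k$-th root needed for the $\mu_k$-cover), I can normalize so that the induced $\ec R' \to S$ has the form $s \mapsto (s')^k$ with $k = k_G$ as in remark \ref{rmk:k_G}; lemma \ref{l:fixk} guarantees that replacing the a priori exponent by the fixed $k_G$, at the cost of a further harmless base change, does not alter the bundle. Over the smooth surface $C_{S,f} - p$, proposition \ref{p:first1/2} extends the given generic $G$-bundle to a $G$-bundle $P'$ on $C_{S,[k]} - [p] \cong C_{S,f} - p$ over $\ec R'$. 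The remaining task is thus to produce the limiting object in a formal neighborhood of the node together with its gluing to $P'$.

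Next I would translate the local picture into the loop group. Over the punctured base the curve is smooth near the node, so by the double coset construction (the maps \eqref{name.Psi.k} and the identifications of proposition \ref{p:para2equiv.node}, with the orbit dictionary of proposition \ref{p:orb_I}) the restriction of $P$ to a formal neighborhood of the node determines an arc, i.e. a $K'$-point of a double coset of $\pGsd$, approaching the boundary of $\ol{\pGsd}$ as $s \to 0$. The completeness statement for $\ol{\pGsd}$ established in \cite{Solis} — the analogue of the properness of the wonderful compactification of an adjoint group — then provides, after a possible further finite extension, an $R'$-point of $\ol{\pGsd}$ extending this arc whose special point lands in a unique boundary orbit $\mathbf{O}_I$ for some $I \subset \{0, \dots, r\}$. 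By theorem \ref{thm:Gbundles.chains.para.represented} this orbit is isomorphic to $T_{G,I}(C_{0,[k]})$, so the limit is precisely an admissible $G$-bundle of type $(\eta_{i_1}, \dots, \eta_{i_n})$ on a twisted chain; gluing it to $P'$ along the punctured neighborhood, and realizing the underlying chain as a flat family over $\ec R'$ via Gieseker's versal modification family $Mdf_n$ from the example after \eqref{Cnk}, yields the dotted arrow $h$. Admissibility is automatic because the boundary orbits $\mathbf{O}_I$ are indexed exactly by such subsets, which also shows the limit lies in the closure of the divisor $D_i = \mc{M}_{G,\eta_i}(C_{0,[k]})$ as asserted in the proof of theorem \ref{thm:isAlgStack}.

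The hard part will be the final gluing step: matching the abstract $R'$-arc in $\ol{\pGsd}$, which a priori only records the special fiber on a fixed twisted chain over $S_0$, with an honest flat, finitely presented family over the discrete valuation ring $\ec R'$, and checking that the simple normal crossings stratification of $\ol{\pGsd}$ is compatible with the $(\Cs)^n$-versal deformation $Mdf_n$ of the chain so that the resulting object is a genuine twisted local modification. A secondary subtlety, handled by remark \ref{rmk:k_G} and lemma \ref{l:fixk}, is the bookkeeping of the finite extensions and the normalization of $f$ so that the single fixed value $k = k_G$ works simultaneously for every boundary orbit that can occur.
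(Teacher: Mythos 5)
There is a genuine gap, and it is the central case of the theorem. By writing $f(s) = s^m u$ with $u$ a unit you have silently assumed $f \ne 0$, so your argument never addresses the case where $f$ is the constant map $s \mapsto 0$, i.e.\ where $\ec R$ maps entirely to the closed point $S_0$. In that case $h^*$ is a $K$-point of the fiber of $\mc{X}_G(C_S)$ over $S_0$: an admissible $G$-bundle on a twisted modification $C_{n,[k]}$ of the \emph{fixed} nodal curve $C_0$, and the problem is to degenerate this family of twisted Gieseker bundles further over $\ec R'$. This is precisely where the paper deploys the loop-group machinery you describe: theorem \ref{thm:Gbundles.chains.para.represented} converts the datum (after trivializing off the chain) into a $K'$-point $\phi$ of $\pLG \x \pLG/\bigl(Z(L_I)\Delta(L_I)\ltimes(\mc{U}_I^-\x\mc{U}_I)\bigr)$; the induced map to $\pLG/\mc{P}_I^- \x \pLG/\mc{P}_I$ extends over $\ec R'$ because that is a \emph{projective ind-variety} (diagram \eqref{e:last}); and then a lifting, a factorization through the Levi, and the Bruhat decomposition for loop groups produce a co-character $\eta'$, a subset $I'$ with $P(\eta') = P(\eta_{I'})$, and hence an admissible bundle on a longer twisted modification extending $h^*$, the process terminating when $I = \{0,\dotsc,r\}$. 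Your proposal has all the right ingredients for this step but never applies them to it, because your case analysis excludes it.

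The secondary problem is that you apply that machinery instead to the dominant-$f$ case, where it is not well-founded and also not needed. When $f$ is dominant the curve itself degenerates, so a formal neighborhood of the node in the total space is a two-dimensional formal scheme ($\ec\C[[x,y,s]]/(xy-s^l)$ minus the origin), not a family over $\ec K'$ of bundles on a fixed punctured disc; the double coset identifications \eqref{name.Psi.k} and propositions \ref{p:para2equiv.node}, \ref{p:orb_I} are all stated for fixed curves, so ``the restriction of $P$ to a formal neighborhood of the node determines an arc in a double coset of $\pGsd$'' does not parse without substantial extra work. The paper avoids this entirely: proposition \ref{p:first1/2} (via Brav's equivariant extension result) already extends the bundle across the twisted node as a $G$-bundle on $C_{S,[k]}$, and lemma \ref{l:fixk} together with the face-of-the-alcove/blowup argument replaces the resulting equivariant structure $\eta$ by the admissible type $\eta_I$ on an iterated blowup, with no appeal to any completeness of $\ol{\pGsd}$. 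Relatedly, the paper's proof never invokes a black-box ``completeness theorem for $\ol{\pGsd}$'' from \cite{Solis}; the only properness input it uses is the projectivity of the partial affine flag varieties $\pLG/\mc{P}_I$. So your proposal inverts the roles of the two nontrivial cases: it imports heavy machinery where a direct extension suffices, and omits the case where that machinery is indispensable.
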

\begin{proof}
If $f$ factors through $S^* \subset S$ then the morphism $h^*$ determines a $G$-bundle on the smooth curve $C_{S^*}$ and completeness of $\cMG(C_{S^*})$ assures we can extend this to a $G$-bundle over $C_{S^*} \x_{S^*} \ec R'$ which produces the required morphism $h$.

Assume now $f$ is surjective. As in proposition \ref{p:first1/2}, normalize $f$ so it is given by $s \mapsto s^l$ for $l \ge 1$.  Also by proposition \ref{p:first1/2}, the map $h^*$ amounts to a $G$-bundle $P$ on $\C[[x,y,s]]/(x y - s^l) - (0,0,0)$.  By lemma  \ref{l:fixk}, after a finite base change, we can identify $P$ with the restriction of a $G$-bundle on twisted curve of order $k$. Moreover, we can further suppose the equivariant structure of the bundle is determined by a co-character $\eta$ which lies in a face of $Al$.  If $\eta$ happens to be one of the vertices $\eta_i$ of $Al$ then we've determined an objects of $\mc{X}_G(C_S)$ extending $E$.  

In general $\eta$ lies in a higher dimensional face of $Al$ and there is a subset $I = \{\eta_{i_1}, \dotsc, \eta_{i_n}\} \subset \{0, \dotsc, r\}$ such that $P(\eta) = \mc{P}_I$ where $\mc{P}_I$ is defined in \eqref{para.etaI} section \ref{s:notation.para}.  Let $D^{'\frac{1}{k}}_S$ be the iterated blowup of $D^{\frac{1}{k}}_S = \ec \C[[u,v]]$ such that $D^{'\frac{1}{k}}_S \xrightarrow{\pi} \ec \C[[u,v]]$ is a modification of length $\le n-1$ and let $E(\eta_I)$ be the bundle on $[D^{'\frac{1}{k}}_S/\mu_k]$ determined by fixing the equivariant structure at the $j$th node in $\pi^{-1}(0,0)$ to be $\eta_{i_j}$. Because $\eta,\eta_I$ lie in the same face of $Al$ they determine isomorphic bundles hence $E(\eta_I)$ yields an object in $\mc{X}_G(C_S)$ extending $E$.

Finally, if $f$ is the map $s \mapsto 0$ then the map $h^*$ define an admissible $G$ bundles $P_{h^*}$ on a curve $C_{n,[k]}$ as in \eqref{Cnk}.  By definition, there is a subset $I = \{i_j\} \subset \{0, \dotsc, r\}$ of cardinality $n+1$ such that $P_{h^*}|_{D_{n,[k]}}$ is an equivariant bundle with equivariant structure at $p_j \in D_{n,[k]}$ determined by $\eta_{i_j}$.  

After potentially a faithfully flat base change $\ec R' \to \ec R$ the bundle is trivial on the complement of the chain $\cong C_0 - p_0$.  By theorem \ref{thm:Gbundles.chains.para.represented}, fixing a trivialization defines a morphism $\ec K' \to T_{G,I}(D_{n,[k]}) = \frac{\pLG \x \pLG}{Z(L_I) \Delta(L_I) \ltimes (\mc{U}^-_I \x \mc{U}_I)}$. Further we have a commutative diagram
\begin{equation}\label{e:last}
\xymatrix{
\ec K'^*\ar[r]^{\phi \ \ \ \ \ \ \ \  \  }\ar[d] & \frac{\pLG \x \pLG}{Z(L_I) \Delta(L_I) \ltimes (\mc{U}^-_I \x \mc{U}_I)}\ar[d]\\
\ec R' \ar[r]^{\ol{\phi}  \ \ \ \ \ \ \ \  \ \ \ \ \ \ }& \pLG/\mc{P}^-_I \x \pLG/\mc{P}_I .
}
\end{equation}
This follows because $ \pLG/\mc{P}^-_I \x \pLG/\mc{P}_I $ is a projective ind variety.  Let $H = \pLG \x \pLG$, $H_1 = \Delta(L_I) \ltimes (\mc{U}^-_I \x \mc{U}_I)$ and $H_2 = \mc{P}^-_I \x \mc{P}_I$.  Identifying $K' = \C((s))$ and using that $\cup_{m\ge 1} \C((s^{1/m}))$ is the algebraic closure of $\C((s))$ we conclude that after another base change $\ec K'' \to \ec K'$ the element $\phi \in H/H_1((s))$ lifts to an element $\phi' \in H((s^{1/m}))$ for some $m$.  The fact that $\phi$ extends to a map $\ol{\phi}$ on $\C[[s]]$ means that $\phi'$ has a factorization $\phi' = \phi'' \psi$ where $\phi'' \in H[[s^{1/m}]]$ and $\psi \in H_2((s^{1/m}))$.  By applying a change of trivialization over the normalization $\widetilde{D_0}$ we can replace $\phi'$ with $\psi$.  Then using the Levi decomposition of $H_2$ we can factor $\psi = \psi_L \x \psi_U$ where $\psi_L  \in L_I((s^{1/m})) \x L_I((s^{1/m}))$   and $\psi_U \in \mc{U}^-_I((s^{1/m})) \x \mc{U}_I((s^{1/m}))$.  Finally by applying a suitable automorphism over $D_{n,[k]}$ we can replace $\psi$ simply with $\psi_L$.  Altogether the map $\phi$ induces a morphism $\psi_L \colon \C((s^{1/m})) \to H_2 \to H_2/H_1 \cong L_I$.  By abuse of notation let the composition also be denoted $\psi_L$.  Since we have only changed $\phi$ by automorphisms and extensions of the variable, the map $\psi_L$ is in the same isomorphism class of $\phi$.

Using the Bruhat decomposition for loop groups we conclude $\psi_L \in L[[s^{1/m}]] \eta'(s^{1/m}) L[[s^{1/m}]]$, where we again can take $\eta'$ to be in the affine Weyl alcove.  Then as in the previous case we find a subset $I' \subset \{0, \dotsc, r\}$ such that $P(\eta') = P(\eta_{I'})$ where $\eta_{I'} = \sum_{i_j \in I'} \eta_{i_j}$ and use this to construct an object in $\mc{X}_G(C_S)$ extending $h^*$.  

This degeneration terminates when the subset $I = \{0,\dotsc, r\}$ because then the right vertical map in \eqref{e:last} is an isomorphism.
\end{proof}




ÊÊ
\bibliographystyle{plain} 
\bibliography{OGBun}

\end{document}